\title{Rational cuspidal curves with four cusps on Hirzebruch surfaces}
\author{Torgunn Karoline Moe}
\address{Department of Mathematics, University of Oslo, P.O. Box 1053 Blindern, NO-0316 Oslo, NORWAY}
\email{torgunnk@math.uio.no}
\subjclass[2000]{14H20, 14H45.}
\keywords{Cuspidal curves, rational curves, curves with many cusps, Hirzebruch surfaces}
\date{\today}
\theoremstyle{definition}
\theoremstyle{plain}% default
\newtheorem{thm}{Theorem}[section]
\newtheorem{lem}[thm]{Lemma}
\newtheorem{prp}[thm]{Proposition}
\newtheorem{cor}[thm]{Corollary}
\newtheorem*{cor*}{Corollary}
\newtheorem{conj}[thm]{Conjecture}
\newtheorem*{conj*}{Conjecture}
\newtheorem*{thm*}{Theorem}
\newtheorem*{prb*}{Problem}
\newtheorem*{qe*}{Question}
\theoremstyle{definition}
\newtheorem{ex}[thm]{Example}
\newtheorem{rem}[thm]{Remark}
\theoremstyle{remark}
\numberwithin{equation}{section}
\newcommand{\Po}{\ensuremath{\mathbb{P}^2}}
\newcommand{\PP}{\ensuremath{\mathbb{P}^1 \times \mathbb{P}^1}}
\newcommand{\Fe}{\ensuremath{\mathbb{F}_e}}
\newcommand{\Fh}{\ensuremath{\mathbb{F}_h}}
\newcommand{\Fen}{\ensuremath{\mathbb{F}_1}}
\newcommand{\Pot}{\ensuremath{\mathbb{P}^1}}
\newcommand{\C}{\ensuremath{\mathbb{C}}}
\newcommand{\h}{\mathrm{h}}
\newcommand{\Z}{\ensuremath{\mathbb{Z}}}
\newcommand{\Q}{\ensuremath{\mathbb{Q}}}
\newcommand{\R}{\ensuremath{\mathbb{R}}}
\newcommand{\V}{\ensuremath{\mathscr{V}}}
\newcommand{\RH}{M}
\newcommand{\LH}{L}
\newcommand{\Os}{\ensuremath{\mathscr{O}}}
\newcommand{\noi}{\noindent}
\newcommand{\ol}{\overline}
\newcommand{\beq}{\begin{equation*}}
\newcommand{\eeq}{\end{equation*}}
\newcommand{\bsp}{\begin{split}}
\newcommand{\esp}{\end{split}}
\newcommand{\T}{\ensuremath{\Theta_V \left\langle D \right\rangle}}
\numberwithin{equation}{section}
\newcommand{\lk}{\ol{\kappa}(\Po \setminus C)}
\newcommand{\lkkf}{\ol{\kappa}(\Fe \setminus C)}
\newcommand{\tr}{\toprule}
\newcommand{\br}{\bottomrule}
\newcommand{\ldot}{\ensuremath{\,.\,}}
\newcommand{\hide}[1]{}
\begin{document}

\begin{abstract}
The purpose of this article is to shed light on the question of how many and what kind of cusps a rational cuspidal curve on a Hirzebruch surface can have. We use birational transformations to construct rational cuspidal curves with four cusps on the Hirzebruch surfaces and find associated results for these curves. 
\end{abstract}

\maketitle

\setcounter{tocdepth}{1}

\tableofcontents

\section{Introduction}
%In this section we will first set notation and motivate the study of rational cuspidal curves on Hirzebruch surfaces. We do this by recalling the results about plane rational cuspidal curves and sketching the same problem for a curve on a Hirzebruch surface. Last, we briefly give an overview of the structure of the article.

%\subsection{Notation}
Let $C$ be a reduced and irreducible curve on a smooth complex surface $X$. A point $p$ on $C$ is called a \emph{cusp} if it is singular and if the germ $(C,p)$ of $C$ at $p$ is irreducible. A curve $C$ is called \emph{cuspidal} if all of its singularities are cusps, and $j$-cuspidal if it has $j$ cusps $p_1, \ldots, p_j$. Let $g$ denote the geometric genus of the curve, and recall that the curve is called \emph{rational} if $g=0$.

Assume that two curves $C$ and $C'$, without common components, meet at a point $p$, then $p$ is called an \emph{intersection point} of $C$ and $C'$. If $f$ and $f'$ are local equations of $C$ and $C'$ at $p$, then by the \emph{intersection multiplicity} $(C \cdot C')_p$ of $C$ and $C'$ at $p$ we mean $$(C \cdot C')_p := \dim_{\C}\Os_{X,p}/(f,f').$$ 

We sometimes view the intersection of $C$ and $C'$ as a $0$-cycle, and express this by the notation $C \cdot C'$, where $$C \cdot C' = \sum _{p \in C \cap C'}(C \cdot C')_pp.$$

For any two divisors $C$ and $C'$ on $X$, we calculate the \emph{intersection number} $C \ldot C'$ using linear equivalence and the pairing ${\mathrm{Pic}(X) \times \mathrm{Pic}(X) \rightarrow \Z}$ \cite[Theorem V 1.1, pp.357--358]{Hart:1977}.

By \cite[Theorem V 3.9, p.391]{Hart:1977}, there exists for any curve $C$ on a surface $X$ a sequence of $t$ monoidal transformations, $$V=V_t \xrightarrow{\sigma_t} V_{t-1} \xrightarrow{} \cdots \xrightarrow{} V_1 \xrightarrow{\sigma_1} V_0=X,$$ such that the reduced total inverse image of $C$ under the composition $\sigma:V \rightarrow X$, $$D := \sigma^{-1}(C)_{\mathrm{red}},$$ is a \emph{simple normal crossing divisor} (SNC-divisor) on the smooth complete surface $V$ (see \cite{Iitaka}). The pair $(V,D)$ and the transformation $\sigma$ are referred to as an \emph{embedded resolution} of $C$, and it is called a \emph{minimal embedded resolution} of $C$ when $t$ is the smallest integer such that $D$ is an SNC-divisor.

Let $p$ be a cusp on a curve $C$, let $m$ denote the \emph{multiplicity} of $p$, and let $m_i$ denote the multiplicity of the infinitely near points $p_i$ of $p$. Then the \emph{multiplicity sequence} $\ol{m}$ of the cusp $p$ is defined to be the sequence of integers $$\ol{m}=[m,m_1,\ldots,m_{t-1}],$$ where $t$ is the number of monoidal tranformations in the local minimal embedded resolution of the cusp, and we have $m_{t-1}=1$ (see \cite{Brieskorn}). We follow the convention of compacting the notation by omitting the number of ending 1s in the sequence and indexing the number of repeated elements, for example, we write $$[6,6,3,3,3,2,1,1]=[6_2,3_3,2].$$ Note that there are further relations between the elements in the sequence (see \cite{FlZa95}).

The collection of multiplicity sequences of a cuspidal curve will be referred to as its \emph{cuspidal configuration}.

The question of how many and what kind of cusps a rational cuspidal curve on a given surface can have naturally arises, and in this article we give new answers to the last part of this question for rational cuspidal curves on Hirzebruch surfaces. The first part of the question is addressed in a subsequent article \cite{MOEONC}.

Our main results can be summarized in the following theorem.
\begin{thm*}
For all $e\geq0$ and $h \in \{0,1\}$, except the pairs $(e,k)=(h,k)=(0,0)$, the following rational cuspidal curves with four cusps exist on $\Fe$ and $\Fh$.
\begin{table}[H]
  \renewcommand\thesubtable{}
  \setlength{\extrarowheight}{2pt}
\centering
	{\noindent \begin{tabular*}{1\linewidth}{@{}c@{\extracolsep{\fill}}l@{}l@{}c}
	\tr
	{\textnormal{\textbf{Type}}} & {\textnormal{\textbf{Cuspidal configuration}}} & \textnormal{\textbf{For}}  & \textnormal{\textbf{Surface}}\\
\tr
	{$(2k+1,4)$}  & $[4_{k-1+e},2_3],[2],[2],[2]$& $k \geq 0$ & $\Fe$\\
 	{$(3k+1-h,5)$}  & $[4_{2k-1+h},2_3],[2],[2],[2]$& $k \geq 0$ & $\Fh$ \\
	{$(2k+2-h,4)$} & $[3_{2k-1+h},2],[2_3],[2],[2]$ & $k \geq 0$ &$\Fh$ \\
	{$(k+1-h,3)$}  & $[2_{n_1}],[2_{n_2}],[2_{n_3}],[2_{n_4}]$& $k \geq 2$ and $\sum n_i = 2k+h$ & $\Fh$\\
	{$(0,3)$}  & $[2],[2],[2],[2]$&  & $\mathbb{F}_2$\\
	\br
	\end{tabular*}}
	%\caption{Rational cuspidal curves with four cusps on Hirzebruch surfaces.}
	\label{tab:fourcusps}
	\end{table}
\end{thm*}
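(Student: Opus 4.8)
The plan is to construct every curve in the table explicitly, starting from a small ``seed'' curve and applying a controlled sequence of birational transformations between Hirzebruch surfaces, while keeping track at each stage of three things: the class (equivalently the ``type'' $(a,b)$) of the curve, its geometric genus, and the multiplicity sequences of its cusps. Since all the target curves are rational, the genus bookkeeping reduces to the usual genus formula for cuspidal curves --- subtract the $\delta$-invariants $\binom{m}{2}+\sum_i \binom{m_i}{2}$ of the cusps from the arithmetic genus computed by adjunction on $\Fe$ --- so the real work is controlling the cuspidal configuration.

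The key technical ingredient is the \emph{elementary transformation} $\mathrm{elm}_p\colon \Fe \dashrightarrow \mathbb{F}_{e'}$, $e'=e\pm1$, which blows up a point $p$ and blows down the strict transform of the fibre through $p$. I would first record, in a lemma, how $\mathrm{elm}_p$ changes the type of a curve $C$ and the multiplicity sequence at each of its cusps, distinguishing the cases where $p$ lies off $C$, at a smooth point of $C$, at a cusp of $C$, or at an infinitely near point of a cusp; the effect is purely local, determined by $(C\cdot F)_p$ for the fibre $F$ through $p$ together with the local branch data at $p$. Composing such transformations (switching between the two rulings on $\Fe$ when $e=0$, and going through $\Fen$ to realise quadratic Cremona maps of $\Po$ when useful) is precisely the mechanism that produces the parameters $e$, $h$ and $k$ in the table.

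Then I would treat the five rows one at a time. For each family I fix a base case --- a curve of small type with four cusps on $\PP$, $\Fen$, or $\mathbb{F}_2$, several of which can be obtained by transforming classical rational cuspidal plane curves --- and verify directly that it is irreducible, rational, and carries exactly the four prescribed cusps. I then exhibit a single well-chosen elementary transformation, performed through a smooth point of the curve on a suitable fibre or through a point of one of the cusps, that sends the member with parameter $k$ to the member with parameter $k+1$ (and similarly the step $e\mapsto e+1$ in the first row), preserving irreducibility, rationality, and the number of cusps, and turning a configuration of the form $[\,\cdots,4_j,2_3\,],[2],[2],[2]$ into $[\,\cdots,4_{j+1},2_3\,],[2],[2],[2]$, and analogously for the $[3_j,2]$ row and the $[2_{n_i}]$ row. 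Induction on $k$ (resp.\ on $e$) then yields all the curves claimed; the excluded pair $(e,k)=(0,0)$ and the isolated entry $(0,3)$ on $\mathbb{F}_2$ are dealt with separately.

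The main obstacle I expect is the combinatorial bookkeeping rather than any deep geometry: at every step one must be certain that the elementary transformation is performed at a point where $C$ has exactly the assumed local behaviour, that it neither resolves an existing cusp nor creates a new singular point --- in particular that the strict transform of the blown-down fibre meets the curve transversally --- and that the output multiplicity sequences are exactly those tabulated, including checking that sequences such as $[4_{k-1+e},2_3]$ satisfy the semigroup and proximity relations of \cite{FlZa95} so that they genuinely occur as cusps. Carrying this through uniformly for all $e\geq 0$, $h\in\{0,1\}$, and over the full parameter ranges is where the care lies; once the seed curves and the inductive step are established, the existence statement follows.
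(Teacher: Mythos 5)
Your proposal is correct and follows essentially the same route as the paper: each row is produced from a small classical plane seed curve (the fourcuspidal quintic, the tricuspidal quartic, or the cuspidal cubic) by elementary transformations through carefully chosen points on a fibre with prescribed tangency to the curve, with induction on $k$ (and on $e$ for the first row) and separate treatment of the isolated $(0,3)$-curve on $\mathbb{F}_2$. The only cosmetic difference is that in the paper a single increment of $k$ is sometimes realised by a pair of such transformations passing through $\PP$ and back to $\Fen$, which is exactly the kind of bookkeeping your plan anticipates.
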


The lack of examples of curves with more than four cusps leads us to the following conjecture.
\begin{conj*}
A rational cuspidal curve on a Hirzebruch surface has at most four cusps.
\end{conj*}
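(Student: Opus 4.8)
The plan is to translate the problem into a curvature inequality on the logarithmic surface attached to $C$ and then to squeeze the resulting numerical bound down to four. Let $C$ be a rational cuspidal curve on \Fe\ with cusps $p_1,\dots,p_j$, and let $(V,D)$ be its minimal embedded resolution, with $\sigma\colon V \to \Fe$ the composition of the monoidal transformations and $D = \sigma^{-1}(C)_{\mathrm{red}}$ the resulting SNC-divisor guaranteed by \cite[Theorem V 3.9, p.391]{Hart:1977}. Since every Hirzebruch surface is rational and $D$ is an SNC-divisor, the pair $(V,D)$ is a smooth completion of the open surface $\Fe \setminus C$, and the entire cuspidal configuration of $C$ is encoded in the dual graph of $D$ together with the multiplicity sequences $\ol{m}_1,\dots,\ol{m}_j$.

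The first step is to compute the logarithmic Chern numbers $\ol{c}_1^{\,2} = (K_V + D)^2$ and $\ol{c}_2$, the topological Euler number of $\Fe \setminus C$, in terms of $g=0$, the divisor class of $C$ on \Fe, and the local invariants of the cusps. The genus formula for $C$ on \Fe\ relates $C\ldot C$ and $K_{\Fe}\ldot C$ to the sum of the delta-invariants $\sum_i \delta(p_i)$, and each $\delta(p_i)$ is read off from $\ol{m}_i$; at the same time the number of exceptional components introduced over each $p_i$ contributes to $\ol{c}_2$. Carrying this out expresses both Chern numbers as the intersection data of $C$ on \Fe\ corrected by explicit sums over the multiplicity sequences.

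The second step is to apply a logarithmic Bogomolov--Miyaoka--Yau inequality. I would first dispose of the cases $\lkkf < 2$ directly: when the complement is not of log general type the surface $\Fe \setminus C$ is very special and the admissible curves can be enumerated through the Kawamata--Iitaka classification of open surfaces, after which one checks by hand that none of them carries more than four cusps. In the remaining case $\lkkf = 2$ the inequality $\ol{c}_1^{\,2} \le 3\,\ol{c}_2$ of Miyaoka applies to $(V,D)$ and, after substituting the expressions from the first step, yields a single numerical inequality whose left-hand side grows with the intersection data of $C$ while its right-hand side grows with the number and complexity of the cusps. Solving this inequality, in the spirit of the plane-curve bounds of Matsuoka--Sakai, Orevkov and Tono, produces an effective upper bound on $j$.

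The hard part will be sharpening this bound from the value that BMY alone provides --- which, exactly as in the plane case, is expected to be considerably larger than four --- down to four. Three ingredients must be combined: the arithmetic relations among the entries of a multiplicity sequence \cite{FlZa95}, which rule out many a priori admissible configurations; the semicontinuity of the singularity spectrum, which constrains the simultaneous occurrence of several cusps on a rational curve; and the Hirzebruch-specific constraints coming from writing $C$ as $a\,F + b\,C_0$ and intersecting with a fibre $F$ and the section $C_0$, which limit the multiplicities realizable on \Fe. It is precisely the combinatorial case analysis needed to close the gap between the BMY bound and the value four --- together with the absence so far of any construction exceeding four cusps --- that keeps this statement out of reach unconditionally, and this is why it is recorded here as a conjecture rather than as a theorem.
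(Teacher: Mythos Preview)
This statement is recorded in the paper as a \emph{conjecture}, not a theorem; the paper offers no proof whatsoever. Its sole justification is the sentence immediately preceding it: ``The lack of examples of curves with more than four cusps leads us to the following conjecture.'' The companion article \cite{MOEONC} is said to address upper bounds on the number of cusps, but no bound equal to four is claimed or proved anywhere in the present paper.

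Your proposal is likewise not a proof, and you say so yourself in the final paragraph. What you have written is a plausible sketch of how one would \emph{attempt} a proof via the logarithmic Bogomolov--Miyaoka--Yau inequality on $(V,D)$, together with an honest account of where that attempt stalls: BMY alone produces a bound substantially larger than four (for plane curves Tono's argument along these lines gives eight, as the paper itself recalls), and the additional combinatorial and spectral input needed to force the bound down to four is not currently available. In that sense you and the paper are in complete agreement that the statement is open; your discussion of the obstruction is simply more detailed than the paper's one-line empirical motivation. There is nothing to correct here --- just be aware that you are comparing a proof strategy against a statement the paper never attempts to prove.
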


Associated to these curves, we find results regarding the Euler characteristic of the logarithmic tangent sheaf $\chi(\T)$ and the maximal multiplicity of a cusp on a rational cuspidal curve. Moreover, we give an example of a real rational cuspidal curve with four real cusps on a Hirzebruch surface.

\subsection{Structure}
In Section \ref{MN} we motivate the study of rational cuspidal curves on Hirzebruch surfaces by recalling the history of the study of such curves on the projective plane. In Section \ref{PR} we give the basic definitions and preliminary results for (rational) cuspidal curves on Hirzebruch surfaces. Section \ref{4C} contains the main results of this article. Here we construct several series of rational cuspidal curves with four cusps on Hirzebruch surfaces. In Section \ref{AR} we present associated results, and in particular we compute $\chi(\T)$ for the curves we have found in Section \ref{4C}. Moreover, we find a lower bound for the multiplicity of the cusp with the highest multiplicity. Additionally, we construct a real rational cuspidal curve with four real cusps. In the appendix we construct some of the rational cuspidal curves with four cusps on Hirzebruch surfaces that were found in Section \ref{4C}.

\subsection{Acknowledgements}
This article consists of results from my PhD-thesis \cite{MOEPHD}, and it is the second of two articles (see \cite{MOEONC}). I am very grateful to Professor Ragni Piene for suggesting cuspidal curves on Hirzebruch surfaces as the topic of my thesis, and for all the help along the way. Moreover, I am indebted to Georg Muntingh for helping me create images, and to Nikolay Qviller for guiding me over some of the obstacles that I met in this work. Furthermore, I would like to thank Torsten Fenske for sending me a copy of his PhD-thesis, and Professor Hubert Flenner and Professor Mikhail Zaidenberg for valuable comments and suggestions.

\section{Motivation: the case of plane curves}\label{MN}
Let $\Po$ denote the projective plane with coordinates $(x:y:z)$ and coordinate ring $\C[x,y,z]$. A reduced and irreducible curve $C$ on $\Po$ is given as the zero set $\V(F)$ of a homogeneous, reduced and irreducible polynomial $F(x,y,z) \in \C[x,y,z]_d$ for some $d$. In this case, the polynomial $F$ and the curve $C$ is said to have degree $d$. Let $q$ be a smooth point on a plane curve $C$ with tangent $T$. Recall that the point $q$ is called an \emph{inflection point} if the local intersection multiplicity $(C \cdot T)_q$ of $C$ and $T$ at $q$ is $\geq 3$.

Plane rational cuspidal curves have been studied quite intensively both classically and the last 20 years. Classically, the study was part of the process of classifying plane curves, and additionally bounds on the number of cusps were produced (see \cite{Clebsch, Lefschetz, Salmon, Telling, Veronese, Wieleitner}). In the modern context, rational cuspidal curves with many cusps play an important role in the study of open surfaces (see \cite{Fent, FlZa94, Wak}), and the study of these curves was further motivated in the mid 1990s by Sakai in \cite{Sakai}. The two tasks at hand were first to classify all rational and elliptic cuspidal plane curves, and second to find the maximal number of cusps on a rational cuspidal plane curve. 

The first complete classification of rational cuspidal curves of degree $d \leq 5$ that we have found, up to cuspidal configuration, is by Namba in \cite{Namba}, and a list of the curves of degree $d=5$ can be found in Table \ref{tab:degree5} \cite[Theorem 2.3.10, pp.179--182]{Namba}. The subclassification of the curves is due to the appearance of inflection points. The remarkable thing about this list of curves is that there are fewer curves with many cusps than expected. In fact, for curves of degree $d \leq 5$ there are only three curves with three cusps and only one with four cusps. For curves in higher degrees, less is known, but there is a classification of curves of degree $d=6$ by Fenske in \cite{Fent}, and in that case the highest number of cusps is three, and up to cuspidal configuration there are only two such curves. 

\begin{table}[htb]
  \renewcommand\thesubtable{}
  \setlength{\extrarowheight}{2pt}
\centering
{\small
	{\noindent \begin{tabular*}{\linewidth}{@{}c@{\extracolsep{\fill}}l@{}c}
	\toprule
	{\textbf{Curve}} &{\textbf{Cuspidal conf.}}& {\textbf{Parametrization}}\\ 
	\toprule
	$C_{1A}$&$\quad[4]$& $(s^5:st^4:t^5)$\\
	$C_{1B}$&$\quad[4]$&$(s^5-s^4t:st^4:t^5)$\\
	$C_{1C}$&$\quad[4]$&$(s^5+as^4t-(1+a)s^2t^3:st^4:t^5), \; a\neq-1$\\
	$C_2$&$\quad [2_6]$&$(s^4t:s^2t^3-s^5:t^5-2s^3t^2)$\\
	\midrule
	$C_{3A}$&$\quad [3,2],[2_2]$ &$(s^5:s^3t^2:t^5)$\\
	$C_{3B}$&$\quad [3,2],[2_2]$ &$(s^5:s^3t^2:st^4+t^5)$\\
	$C_4$&$\quad [3], [2_3]$&$(s^4t-\frac{1}{2}s^5:s^3t^2:\frac{1}{2}st^4+t^5)$\\
	$C_5$&$\quad [2_4],[2_2]$&$(s^4t-s^5:s^2t^3-\frac{5}{32}s^5:-\frac{47}{128}s^5+\frac{11}{16}s^3t^2+st^4+t^5)$\\
	\midrule
	$C_6$&$\quad [3],[2_2],[2]$&$(s^4t-\frac{1}{2}s^5:s^3t^2:-\frac{3}{2}st^4+t^5)$\\
	$C_7$&$\quad [2_2],[2_2],[2_2]$&$(s^4t-s^5:s^2t^3-\frac{5}{32}s^5:-\frac{125}{128}s^5-\frac{25}{16}s^3t^2-5st^4+t^5)$\\
	\midrule
	$C_8$&$\quad [2_3],[2],[2],[2]$&$(s^4t:s^2t^3-s^5:t^5+2s^3t^2)$\\
	\bottomrule
	\end{tabular*}}
}
	\caption[Plane rational cuspidal curves of degree five.]{Plane rational cuspidal curves of degree five.}
	\label{tab:degree5}
	\end{table}

In two articles from the 1990s Flenner and Zaidenberg construct two series of rational cuspidal curves with three cusps, where one cusp has a relatively high multiplicity \cite{FlZa95, FlZa97}. A third series was found and further exploration of such curves was done by Fenske in \cite{Fen99a,Fent}. The known rational cuspidal curves with three or more cusps can be listed as follows.

\begin{enumerate}[$(I)$]
\item For $d=5$, a rational cuspidal curve with three or more cusps has one of these cuspidal configurations \cite[Theorem 2.3.10, pp.179--182]{Namba}:
\begin{align*}
&[3],[2_2],[2],\\
&[2_2],[2_2],[2_2],\\
&[2_3],[2],[2],[2].
\end{align*}
\item For any $a\geq b \geq 1$ there exists a rational cuspidal plane curve $C$ of degree $d=a+b+2$ with three cusps \cite[Theorem 3.5, p.448]{FlZa95}: 
\begin{equation*}
[d-2],[2_a],[2_b].
\end{equation*}
\item For any $a \geq 1$, there exists a rational cuspidal plane curve $C$ of degree $d=2a+3$ with three cusps \cite[Theorem 1.1, p.94]{FlZa97}: 
\begin{equation*}
[d-3,2_a],[3_a],[2].
\end{equation*}
\item For any $a \geq 1$, there exists a rational cuspidal plane curve $C$ of degree $d=3a+4$ with three cusps \cite[Theorem 1.1, p.512]{Fen99a}: 
\begin{equation*}
[d-4,3_a],[4_a,2_2],[2].
\end{equation*}
\end{enumerate}
All these curves are constructed explicitly by successive Cremona transformations of plane curves of low degree. In cases $(II)$ and $(III)$ it is proved by Flenner and Zaidenberg in \cite{FlZa95,FlZa97} that these are the only tricuspidal curves with maximal multiplicity of this kind. The same is proved by Fenske in \cite{Fen99a} for case $(IV)$ under the assumption that $\chi(\T)=0$. Note that this result is originally proved with $\chi(\T)\leq0$, but by a result by Tono in \cite{Tono05} only $\chi(\T)=0$ is needed. The curves in $(II)$, $(III)$ and $(IV)$ constitute three series of cuspidal curves with three cusps, with infinitely many curves in each series.

The lack of examples of curves with more than four cusps, leads to a conjecture originally proposed by Orevkov.
\begin{conj}
A plane rational cuspidal curve can not have more than four cusps.
\end{conj}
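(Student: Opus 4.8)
This is Orevkov's conjecture, one of the central open problems about rational cuspidal plane curves, so I do not expect to be able to give a complete proof; what I can offer is the line of attack that the known partial results follow, together with the point at which it stalls. The plan is to argue by contradiction using a logarithmic Bogomolov--Miyaoka--Yau inequality for the open surface $\Po\setminus C$.

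Concretely, suppose $C\subset\Po$ is a rational cuspidal curve of degree $d$ with $j\geq5$ cusps $p_1,\dots,p_j$ of multiplicity sequences $\ol{m}_1,\dots,\ol{m}_j$. I would pass to the minimal embedded resolution $\sigma\colon(V,D)\to\Po$ with $D=\sigma^{-1}(C)_{\mathrm{red}}$ an SNC divisor, and set $U=\Po\setminus C=V\setminus D$. Since $C$ is cuspidal its normalization $\Pot\to C$ is a homeomorphism and each cusp is locally contractible, so $e(C)=e(\Pot)=2$ and hence $e(U)=e(\Po)-e(C)=1$; in particular $U$ is a $\Q$-homology plane. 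For $j\geq3$, apart from a short explicit list of low-degree curves, one has $\ol\kappa(U)=2$, so after iteratively contracting the $(-1)$-curves that meet $D$ in at most one point one reaches an almost minimal model $(\ol V,\ol D)$ for which an inequality of logarithmic BMY type holds, i.e. a bound on $(K_{\ol V}+\ol D)^2$ in terms of $e(U)$ and the correction terms produced by the almost-minimalization.

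The next step is to turn both sides into local data of the cusps. Each $p_i$ blows up to a chain of exceptional curves in $D$, and its contributions to $(K_V+D)^2$, to $e(U)$, and to the correction terms of $(\ol V,\ol D)$ are completely determined by $\ol{m}_i$ and can be read off the semigroup of the cusp. Substituting these into the BMY inequality and combining with the genus formula $\sum_i\delta_i=\binom{d-1}{2}$ (equivalently $\sum_i\mu_i=(d-1)(d-2)$) and with the relations among the $m_{i,k}$ recalled in the introduction (see \cite{FlZa95}), one extracts an upper bound for $j$ that is independent of $d$. The sharpest forms of this argument (Orevkov, Tono) currently give $j\leq8$, and a finer case analysis of the admissible multiplicity sequences lowers this in restricted ranges; compare also the companion article \cite{MOEONC} for the analogous question on Hirzebruch surfaces. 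The conjecture predicts $j\leq4$, attained, e.g., by the quintic $C_8$ of Table~\ref{tab:degree5}.

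The main obstacle is precisely this surviving gap: the numerical system consisting of BMY together with the genus, Milnor and semigroup constraints admits formal ``cusp configurations'' with five or more cusps, even though none of them is realized by an actual plane curve, so the inequality is simply not tight enough to reach $4$. Closing the argument seems to require a genuinely sharper global input --- a refinement of the open BMY inequality that keeps track of $\chi(\T)$ and the orbifold structure more finely (in the spirit of the results used elsewhere in this paper and by Tono in \cite{Tono05}), Varchenko-type singularity-spectrum estimates for the cusps, or rigidity theorems forcing the dual graph of $D$ into one of the Flenner--Zaidenberg--Fenske families \cite{FlZa95,FlZa97,Fen99a}. No such argument is presently available, and an honest proof must stop here, flagging the statement as open.
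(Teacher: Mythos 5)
This statement is Orevkov's conjecture and the paper offers no proof of it either --- it is stated precisely as an open conjecture, with the surrounding discussion in Section \ref{MN} only recording the known partial results (in particular Tono's bound of at most $8$ cusps for plane rational cuspidal curves). Your assessment is therefore correct and consistent with the paper: you accurately identify the statement as open, and your sketch of the logarithmic Bogomolov--Miyaoka--Yau line of attack and of where it stalls matches the state of the art the paper itself appeals to.
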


Further research by Fenske in \cite[Section 5]{Fent} and Piontkowski in \cite{Piontkowski} implies that there are no other tricuspidal curves, hence the above conjecture was extended \cite{Piontkowski}.
\begin{conj}
A plane rational cuspidal curve can not have more than four cusps. If it has three or more cusps, then it is given in the above list.
\end{conj}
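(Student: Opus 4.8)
The plan is to split the statement into its two assertions---that the number of cusps is at most four, and that a curve with at least three cusps appears in the list---and to attack the numerical bound first. The governing tool is the logarithmic Bogomolov--Miyaoka--Yau (BMY) inequality for the open surface $\Po \setminus C$, used together with the genus formula and semicontinuity constraints on the singularities. The genus formula for a rational cuspidal curve of degree $d$ with cusps $p_1,\ldots,p_j$ reads
\begin{equation*}
\binom{d-1}{2} = \sum_{i=1}^{j} \delta_{p_i}, \qquad \delta_{p_i} = \sum_{\ell} \binom{m_{i,\ell}}{2},
\end{equation*}
where the $m_{i,\ell}$ range over the multiplicity sequence $\ol{m}_i$ of $p_i$; this is the basic arithmetic constraint tying the cuspidal configuration to $d$.

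First I would pass to the minimal embedded resolution $(V,D)$ and show that $\lk = 2$ whenever $C$ carries at least three cusps, so that the logarithmic BMY inequality is available; this is where one invokes the classification of complements of low log-Kodaira dimension, the few curves with $\lk < 2$ being treated separately. Passing to an almost-minimal model of $(V,D)$, the sharp logarithmic inequality bounds the logarithmic Chern number $(K_V+D)^2$ by a multiple of $e(V \setminus D) = e(\Po) - e(C)$. Both Chern numbers are computable from $d$ and the multiplicity sequences alone, and the Euler characteristic $\chi(\T)$ computed in Section~\ref{AR} repackages exactly this data; feeding the genus-$0$ relation into the inequality then constrains $d$ and the admissible multiplicity sequences, and the goal is to force $j \leq 4$.

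With a bound on $d$ in hand, the classification step is in principle a finite enumeration: list all cuspidal configurations of each degree satisfying the genus formula, the logarithmic BMY inequality, and the further obstructions---the distribution of multiplicities forced by the semigroup of each cusp, the Hirzebruch-- and Orevkov-type inequalities, and the semicontinuity of singularity spectra---and then for every surviving candidate either realise it by a sequence of Cremona transformations, as done for the series $(II)$--$(IV)$ and the degree-five curves, or eliminate it.

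The hard part is the elimination: the numerical filters above do not cut the candidate list down to precisely the four-cusp configurations and the listed three-cusp ones. Typically several configurations survive every inequality yet fail to be realised, and ruling each out requires an ad hoc geometric argument---reducing the degree by Cremona transformations, or a rigidity and deformation analysis in the spirit of Flenner--Zaidenberg---with no uniform mechanism known to dispatch them all at once. It is exactly this gap between the numerically admissible configurations and the geometrically realisable ones that keeps the statement at the level of a conjecture.
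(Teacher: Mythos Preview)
The statement you are addressing is presented in the paper as a \emph{conjecture}, not a theorem: the paper offers no proof of it at all. It is attributed to Piontkowski as an extension of Orevkov's earlier conjecture, and is stated as part of the motivational survey in Section~\ref{MN}. There is therefore nothing in the paper to compare your argument against.

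Your write-up is not a proof either, and you acknowledge this yourself in the final paragraph: the strategy you sketch---logarithmic BMY plus genus formula plus spectral and Orevkov-type inequalities, followed by case elimination---is indeed the standard toolkit that has been brought to bear on this problem, and you correctly identify the obstruction, namely that the numerical filters leave a residual list of configurations that no uniform argument is known to exclude. So your proposal is an accurate summary of why the statement remains open, but it should not be labelled a proof attempt. If the intent was to explain the status of the conjecture rather than to prove it, the content is reasonable; if the intent was to give a proof, there is a genuine gap, and it is the one you name: the elimination step does not go through with current methods.
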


Associated to these results, in \cite{FlZa94} Flenner and Zaidenberg conjectured that $\Q$-acyclic affine surfaces $Y$ with logarithmic Kodaira dimension $\ol{\kappa}(Y)= 2$ are \emph{rigid} and \emph{unobstructed}. Complements to rational cuspidal curves with three or more cusps on the projective plane are examples of such surfaces. In that case, the conjecture says that for $(V,D)$ the minimal embedded resolution of the plane curve, the Euler characteristic of the logarithmic tangent sheaf vanishes, $$\chi(\T)=\h^2(V,\T)-\h^1(V,\T)+\h^0(V,\T)=0,$$ and in particular, $\h^1(V,\T)=\h^2(V,\T)=0$. Note that it is known by \cite[Theorem 6]{Iit} that $\h^0(V,\T)=0$. This conjecture is referred to as the \emph{rigidity conjecture}. 

In the study of plane rational cuspidal curves, there has, moreover, been found results bounding the multiplicity of the cusp with the highest multiplicity. This gives restrictions on the possible cuspidal configurations on such a curve. The first result on this matter is by Matsuoka and Sakai in \cite{MatsuokaSakai}, where it is shown that $$d<3\hat{m},$$ where with $m_j$ the multiplicity of the cusp $p_j$, $\hat{m}$ is defined to be $\hat{m}:=\max\{m_j\}$. Improved inequalities were found by Orevkov in \cite{Ore}, where for $\alpha=\frac{3 + \sqrt{5}}{2}$, the bound is improved in general to $$d < \alpha (\hat{m}+1)+\frac{1}{\sqrt{5}},$$ for curves with $\lk=-\infty$, this is further improved to $d < \alpha \hat{m}$, and for curves with $\lk=2$, the bound is $$d < \alpha (\hat{m}+1)-\frac{1}{\sqrt{5}}.$$ 

Note also the result by Tono in \cite{Tono05} on the number of cusps on a plane cuspidal curve. For plane \emph{rational} cuspidal curves, the upper bound on the number of cusps is 8. We deal with this question for cuspidal curves on Hirzebruch surfaces in a subsequent article \cite{MOEONC}.

\section{Notation and preliminary results}\label{PR}
In this section we recall what we mean by a curve on a Hirzebruch surface and state preliminary results that give restrictions on the cuspidal configurations of rational cuspidal curves in this case. 

We first recall some basic facts about the Hirzebruch surfaces. Let $\mathbb{F}_e$ denote the Hirzebruch surface of type $e$ for any $e \geq 0$. Recall that $\Fe$ is a projective ruled surface, with $\Fe=\mathbb{P}(\Os \oplus \Os(-e))$ and morphism $\pi: \mathbb{F}_e \longrightarrow \Pot$. Moreover, $p_a(\mathbb{F}_e)=0$ and $p_g(\mathbb{F}_e)=0$ \cite[Corollary V 2.5, p.371]{Hart:1977}. 

For any $e \geq 0$, the surface $\Fe$ can be considered as the toric variety associated to a fan $\Sigma_e \subset \Z^2$, where the rays of the fan $\Sigma_e$ are generated by the vectors
\begin{displaymath}
v_1=\begin{bmatrix}1\\0 \end{bmatrix}, \quad v_2=\begin{bmatrix}0\\1 \end{bmatrix}, \quad v_3=\begin{bmatrix}-1\\e \end{bmatrix}, \quad v_4=\begin{bmatrix}0 \\-1 \end{bmatrix}.
\end{displaymath}
The coordinate ring of $\Fe$ (see \cite{COX}) is denoted by $S_e := \mathbb{C}[x_{0},x_{1},y_{0},y_{1}]$, where the variables can be given a grading by $\mathbb{Z}^{2}$, 
\begin{eqnarray*}
\deg x_0 & = & (1,0),\\
\deg x_1 & = & (1,0),\\
\deg y_0 & = & (0,1),\\
\deg y_1 & = & (-e,1).
\end{eqnarray*}

Let $S_e(a,b)$ denote the $(a,b)$-graded part of $S_e$,
\begin{displaymath}
S_e(a,b):=\mathrm{H}^0(\Fe,\Os_{\Fe}(a,b)) = \bigoplus_{\substack{\alpha_{0}+\alpha_{1}-e\beta_{1} = a \\ \beta_{0}+\beta_{1} = b}} \mathbb{C}x_{0}^{\alpha_{0}}x_{1}^{\alpha_{1}}y_{0}^{\beta_{0}}y_{1}^{\beta_{1}}.
\end{displaymath}

A reduced and irreducible curve $C$ on $\Fe$ is given as the zero set $\V(F)$ of a reduced and irreducible polynomial $F(x_0,x_1,y_0,y_1) \in S_e(a,b)$. In this case, the polynomial $F$ is said to have bidegree $(a,b)$ and the curve $C$ is said to be of type $(a,b)$.

In the language of divisors, let $\LH$ be a \emph{fiber} of $\pi: \mathbb{F}_e \longrightarrow \Pot$ and $\RH_0$ the \emph{special section} of $\pi$. The Picard group of $\Fe$, $\mathrm{Pic}(\mathbb{F}_e)$, is isomorphic to $\Z \oplus \Z$. We choose $\LH$ and $\RH \sim e\LH+\RH_0$ as a generating set of ${\rm{Pic}}(\Fe)$, and then we have \cite[Theorem V 2.17, p.379]{Hart:1977} $$\LH^2=0, \qquad \LH \ldot \RH=1, \qquad \RH^2=e.$$ The canonical divisor $K$ on $\Fe$ can be expressed as \cite[Corollary V 2.11, p.374]{Hart:1977} $$K \sim (e-2)\LH-2\RH \; \text{ and }\; K^2=8.$$ Any irreducible curve $C\neq \LH,\RH_0$ corresponds to a divisor on $\mathbb{F}_e$ given by \cite[Proposition V 2.20, p.382]{Hart:1977} $$C \sim a\LH+b\RH,\quad b>0, \,a\geq 0.$$

Recall that there are \emph{birational transformations} between these surfaces and the projective plane, and these can be given in a quite explicit way (see \cite{MOEPHD}). Using the birational transformations, we are able to construct curves on one surface from a curve on another surface by taking the strict transform.

A first result concerning cuspidal curves on $\mathbb{F}_e$ regards the genus $g$ of the curve. 
\begin{cor}[Genus formula]\label{genusfe}
A cuspidal curve $C$ of type $(a,b)$ with cusps $p_j$, for $j=1,\ldots,s$, and multiplicity sequences $\ol{m}_j=[m_0,m_1, \ldots, m_{t_j-1}]$ on the Hirzebruch surface $\mathbb{F}_e$ has genus $g$, where $$g=\frac{(b-1)(2a-2+be)}{2}-\sum_{j=1}^s \sum_{i=0}^{t_j-1} \frac{m_i(m_i-1)}{2}.$$
\end{cor}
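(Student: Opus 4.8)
The plan is to derive the genus formula from the standard genus formula for the smooth model of $C$ together with the effect of blowing up the cusps in the minimal embedded resolution. First I would recall that for an irreducible curve $C\sim a\LH+b\RH$ on $\Fe$ the arithmetic genus is computed by adjunction, $p_a(C)=\tfrac12\,C\ldot(C+K)+1$. Using the intersection numbers $\LH^2=0$, $\LH\ldot\RH=1$, $\RH^2=e$ and the canonical class $K\sim(e-2)\LH-2\RH$ from the excerpt, one expands $C\ldot(C+K)$ as a routine bilinear computation in $a,b,e$; this should produce exactly $(b-1)(2a-2+be)$ after dividing by $2$, i.e. $p_a(C)=\tfrac{(b-1)(2a-2+be)}{2}$.

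Next I would account for the singularities. Since every singular point of $C$ is a cusp, the difference between the arithmetic genus $p_a(C)$ and the geometric genus $g$ is the total delta-invariant, $p_a(C)-g=\sum_{j=1}^s\delta_{p_j}$. For a unibranch singularity whose minimal embedded resolution has multiplicity sequence $\ol m_j=[m_0,m_1,\ldots,m_{t_j-1}]$, the classical formula gives $\delta_{p_j}=\sum_{i=0}^{t_j-1}\binom{m_i}{2}=\sum_{i=0}^{t_j-1}\frac{m_i(m_i-1)}{2}$; this follows because each monoidal transformation $\sigma_k$ in the resolution of the cusp drops the delta-invariant by $\binom{m_{k-1}}{2}$, and the strict transform is smooth once the sequence terminates in $1$. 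Substituting into $g=p_a(C)-\sum_j\delta_{p_j}$ yields the stated formula.

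I expect the only genuine subtlety — rather than an obstacle — to be bookkeeping: one must be sure that the $m_i$ appearing in the stated multiplicity sequence are precisely the multiplicities of $C$ and its successive strict transforms at the infinitely near points, so that the delta-invariant formula applies verbatim, and one must use the convention (fixed earlier in the excerpt) that the sequence ends with $m_{t_j-1}=1$. With the adjunction computation on $\Fe$ in hand and the standard local formula $\delta_p=\sum\binom{m_i}{2}$, the two pieces combine immediately. Since this is presented as a corollary, I anticipate the actual proof in the paper will simply cite adjunction on $\Fe$ together with the resolution-theoretic description of $g$ (e.g. as in the references to Hartshorne and Brieskorn given above) and record the arithmetic, rather than reproving the local delta-invariant formula.
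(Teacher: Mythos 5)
Your argument is exactly the paper's: the proof there invokes the general genus formula of \cite[Example V 3.9.2, p.393]{Hart:1977} (adjunction on $\Fe$ with $\LH^2=0$, $\LH\ldot\RH=1$, $\RH^2=e$, $K\sim(e-2)\LH-2\RH$) and then substitutes $\delta_j=\sum_{i}\frac{m_i(m_i-1)}{2}$ for each cusp, just as you propose. Your expansion and bookkeeping match the paper's computation, so the proposal is correct and essentially identical in approach.
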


\begin{proof}
Recall that $C \sim a\LH+b\RH$, $K \sim (e-2)\LH-2\RH$, $L^2=0$, $L \ldot M = 1$ and $M^2=e$. By the general genus formula \cite[Example V 3.9.2, p.393]{Hart:1977}, we have $$g=\frac{(a\LH+b\RH) \ldot (a\LH+b\RH+(e-2)\LH-2\RH)}{2}+1-\sum_{j=1}^s \delta_j,$$ where $\delta$ is the \emph{delta invariant}. This gives
\begin{align*}
g&=\frac{b^2e-2be+ab+be-2a+ab-2b}{2}+1-\sum_{j=1}^s \sum_{i=0}^{t_j-1} \frac{m_i(m_i-1)}{2}\\
&=\frac{(b-1)(2a-2+be)}{2}-\sum_{j=1}^s \sum_{i=0}^{t_j-1} \frac{m_i(m_i-1)}{2}.
\end{align*}
\end{proof}

Secondly, the structure of the Hirzebruch surfaces gives restrictions on the multiplicity sequence of a cusp on a curve on such a surface.
\begin{thm} \label{thm:multb}
Let $p$ be a cusp on a reduced and irreducible curve $C$ of type $(a,b)$ with multiplicity sequence $\ol{m}=[m,m_1,\ldots, m_{t-1}]$ on a Hirzebruch surface $\Fe$. Then $m \leq b$.
\end{thm}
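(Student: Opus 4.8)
The key geometric fact is that on $\Fe$ the fibers $\LH$ of the ruling $\pi\colon \Fe \to \Pot$ move in a base-point-free pencil, and a curve $C$ of type $(a,b)$ satisfies $C \ldot \LH = (a\LH + b\RH)\ldot \LH = b$ since $\LH^2 = 0$ and $\LH \ldot \RH = 1$. So through the cusp $p$ there passes a unique fiber $\LH_p$ of the pencil, and the plan is to compare the local intersection multiplicity $(C \cdot \LH_p)_p$ with the global intersection number $C \ldot \LH = b$.

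First I would note that if $C = \LH$ then $C$ is smooth, so we may assume $C \neq \LH$; in particular $C$ and $\LH_p$ share no component, and $(C \cdot \LH_p)_p$ is a well-defined finite number. Since intersection multiplicities at distinct points are nonnegative and sum to the intersection number, we get
\begin{equation*}
(C \cdot \LH_p)_p \leq \sum_{q \in C \cap \LH_p}(C \cdot \LH_p)_q = C \ldot \LH = b.
\end{equation*}
The remaining step is the standard local inequality $(C \cdot \LH_p)_p \geq m$, where $m$ is the multiplicity of the cusp $p$: the multiplicity of $C$ at $p$ equals the minimal intersection multiplicity of $C$ with a smooth curve (or even just a curve of multiplicity one) through $p$, and any such curve achieving a value below $m$ would have to be tangent to $C$ in a very degenerate way — concretely, if $f$ is a local equation of $C$ at $p$ with leading form of degree $m$ and $\ell$ is a local equation of $\LH_p$, then $(C\cdot \LH_p)_p = \dim_\C \Os_{\Fe,p}/(f,\ell) \geq m$, with equality iff $\LH_p$ is not tangent to a branch of $C$ at $p$. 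Combining the two inequalities yields $m \leq b$.

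The one point that needs a little care — and what I expect to be the main (minor) obstacle — is justifying $(C \cdot \LH_p)_p \geq m$ cleanly, i.e. that passing to the fiber through $p$ cannot do worse than a generic smooth curve through $p$. This follows because $\LH_p$ is itself smooth (it is a rational curve isomorphic to $\Pot$, being a fiber of the ruling), so the intersection multiplicity of $C$ with $\LH_p$ at $p$ is at least the multiplicity $m = \operatorname{mult}_p C$, by the elementary fact that $(C \cdot B)_p \geq \operatorname{mult}_p C \cdot \operatorname{mult}_p B$ for curves $C, B$ with no common branch at $p$ (see \cite[Exercise V 1.5 / Corollary V 3.7]{Hart:1977}), applied with $\operatorname{mult}_p \LH_p = 1$. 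Alternatively one can argue on the blow-up: after the first monoidal transformation in the embedded resolution, the strict transform of $\LH_p$ still passes through the image of $p$ (or is disjoint, only helping), so repeatedly the fiber's transforms meet the successive infinitely near points, giving $(C\cdot \LH_p)_p \geq m + m_1 + \cdots \geq m$ — but the one-line bound $(C\cdot\LH_p)_p \geq m$ suffices. Hence $m \leq (C\cdot \LH_p)_p \leq C \ldot \LH = b$.
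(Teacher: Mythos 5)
Your proposal is correct and follows essentially the same route as the paper: intersect $C$ with the unique fiber $L$ through the cusp $p$, use $m \leq (C \cdot L)_p$ (the paper cites the standard fact from intersection theory, your Hartshorne reference), bound the local intersection by the global number, and compute $C \ldot L = b$. The extra care you take in justifying $(C \cdot L)_p \geq m$ is fine but not a departure from the paper's argument.
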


\begin{proof}
The coordinates of the point $p$ determine a unique fiber $L$. By intersection theory, $m \leq (L \cdot C)_p$ \cite[Exercise I 5.4, p.36]{Hart:1977}. Moreover, $(L \cdot C)_p \leq L \ldot C$. By intersection theory again, $L \ldot C = b$. Hence, $m \leq (L \cdot C)_p \leq L \ldot C = b$. 
\end{proof}

Further restrictions on the type of points on a curve on $\Fe$ can be found using Hurwitz's theorem \cite[Corollary IV 2.4, p.301]{Hart:1977}. First, the general result in this situation. 
%We first restate Theorem \ref{Hurwitz} in this situation.
\begin{thm}[Hurwitz's theorem for $\Fe$]\label{RHFe}
Let $C$ be a curve of genus $g$ and type $(a,b)$, where $b > 0$, on a Hirzebruch surface $\Fe$ with $e>0$. Let $\tilde{C}$ denote the normalization of $C$, and let $\nu$ be the composition of the normalization map $\tilde{C} \rightarrow C$ and the projection map $C \rightarrow \Pot$ of degree $b$. Let $e_p$ denote the ramification index of a point $p \in \tilde{C}$ with respect to $\nu$. Then the following equality holds, $$2b+2g-2 = \sum_{p \in \tilde{C}} (e_p-1).$$ When $e=0$, for curves $C$ of genus $g$ and type $(a,b)$, with $a,b>0$, we have $$2\min\{a,b\}+2g-2 = \sum_{p \in \tilde{C}} (e_p-1).$$
\end{thm}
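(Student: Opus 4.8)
The plan is to apply the classical Hurwitz formula (Riemann--Hurwitz) to the finite morphism $\nu\colon \tilde C \to \Pot$ and then identify its degree in terms of the type $(a,b)$ of $C$. Since $\tilde C$ is a smooth projective curve of genus $g$ and $\Pot$ has genus $0$, Hurwitz's theorem \cite[Corollary IV 2.4, p.301]{Hart:1977} gives $2g - 2 = (\deg \nu)(2\cdot 0 - 2) + \sum_{p \in \tilde C}(e_p - 1)$, i.e. $2g - 2 + 2\deg\nu = \sum_{p}(e_p-1)$. (Strictly one should note that $\nu$ is separable, which is automatic in characteristic zero, and that the ramification is tame, again automatic over $\C$; these are the hypotheses of the cited corollary.) So the whole statement reduces to computing $\deg \nu$.

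For $e > 0$ the map $C \to \Pot$ is the restriction of the ruling $\pi\colon \Fe \to \Pot$, so its degree equals the number of points in $C \cap L$ counted with multiplicity for a general fiber $L$, which is the intersection number $C \ldot L$. Using $C \sim a\LH + b\RH$, $\LH^2 = 0$ and $\LH \ldot \RH = 1$ we get $C \ldot L = b$. The normalization map $\tilde C \to C$ is birational, hence degree $1$, so $\deg\nu = b$ and the first formula follows. For $e = 0$, $\Fe = \PP$ carries two rulings $\pi_1, \pi_2$, corresponding to the two factors; composing with $\pi_1$ gives degree $C \ldot L_1$ and composing with $\pi_2$ gives $C \ldot L_2$, and with the chosen basis these are $b$ and $a$ respectively (or more symmetrically, the two fiber classes pair with $C \sim aL + bM$ to give $a$ and $b$). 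One is free to pick whichever projection one likes, but to get the sharpest statement — the one that also controls ramification when, say, $a < b$ — one takes the projection of smaller degree, which yields the stated $2\min\{a,b\} + 2g - 2 = \sum_p (e_p - 1)$; note that for $e=0$ the roles of $a$ and $b$ are genuinely symmetric since $M^2 = e = 0$.

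I expect the only real subtlety to be bookkeeping rather than mathematics: making sure the degree of the projection is read off correctly from the conventions fixed earlier (where $\LH$ is the fiber of $\pi$, $\RH \sim e\LH + \RH_0$, $\LH \ldot \RH = 1$), and, in the $e=0$ case, being explicit that there are two inequivalent rulings so that one may replace $\deg\nu$ by either $a$ or $b$ and hence by $\min\{a,b\}$. A minor point worth a sentence is that one needs $b>0$ (resp. $a,b>0$) precisely so that $C$ is not itself a fiber and the projection is genuinely finite of positive degree; this is already part of the hypotheses. Beyond that, the proof is a two-line invocation of Hurwitz plus a one-line intersection-number computation, and there is no genuine obstacle.
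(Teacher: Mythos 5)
Your proposal is correct and follows essentially the same route as the paper: apply Hartshorne's Hurwitz theorem (Corollary IV 2.4) to the composition $\nu\colon \tilde{C}\rightarrow \Pot$, whose degree is $b$ (computed via $C \ldot \LH = b$), and for $e=0$ use the projection of lower degree to obtain $\min\{a,b\}$. Your added remarks on separability, tameness, and the degree-$1$ normalization map are just the routine justifications the paper leaves implicit.
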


\begin{proof}
The result follows from \cite[Corollary IV 2.4, p.301]{Hart:1977}. With $\nu$ as above, we get $$2b+2g-2 = \sum_{p \in \tilde{C}} (e_p-1).$$ When $e=0$, use the projection map of lower degree, i.e., $\min\{a,b\}$.
\end{proof}

An immediate corollary gives restrictions on the multiplicities of cusps on a curve.
\begin{cor}\label{RHM}
Let $C$ be a cuspidal curve of type $(a,b)$ and genus $g$ with $s>0$ cusps $p_j$ with multiplicities $m_j$ on a Hirzebruch surface $\Fe$ with $e>0$. Then the following inequality holds, $$2b+2g-2 \geq \sum_{j=1}^s (m_j-1).$$  When $e=0$, the same is true with $\min\{a,b\}$ instead of $b$.
\end{cor}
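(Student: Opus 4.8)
The plan is to derive this directly from Hurwitz's theorem for $\Fe$ (Theorem \ref{RHFe}) by bounding each local contribution $e_p - 1$ from below in terms of the cusp multiplicities. Concretely, for a cusp $p_j$ on $C$ of multiplicity $m_j$, I would argue that the ramification index of the (unique) preimage of $p_j$ under $\nu$ is at least $m_j$. The geometric reason is that the composition $\nu\colon \tilde C \to C \to \Pot$ is, near $p_j$, the restriction of the fibration $\pi$ to $C$, and the fiber $L$ through $p_j$ meets $C$ at $p_j$ with multiplicity $(L\cdot C)_{p_j} \geq m_j$ by \cite[Exercise I 5.4, p.36]{Hart:1977} (this is exactly the inequality already used in the proof of Theorem \ref{thm:multb}). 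Pulling back to the normalization, the preimage of $p_j$ is a single point $\tilde p_j$ (since a cusp is unibranch), and the local intersection multiplicity of $C$ with $L$ at $p_j$ equals the ramification index $e_{\tilde p_j}$ of $\nu$ at $\tilde p_j$. Hence $e_{\tilde p_j} \geq m_j$, so $e_{\tilde p_j} - 1 \geq m_j - 1$.

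Next I would assemble the global inequality. Since the cusps $p_1, \dots, p_s$ are distinct points and each has a single preimage $\tilde p_j$, these preimages are distinct points of $\tilde C$, and all the summands $e_p - 1$ in Hurwitz's formula are nonnegative. Therefore
\begin{equation*}
2b + 2g - 2 = \sum_{p \in \tilde C}(e_p - 1) \geq \sum_{j=1}^{s}(e_{\tilde p_j} - 1) \geq \sum_{j=1}^{s}(m_j - 1),
\end{equation*}
which is the claimed bound. For the case $e = 0$, I would simply run the identical argument but use the projection of degree $\min\{a,b\}$ (as in the statement of Theorem \ref{RHFe}), replacing $b$ by $\min\{a,b\}$ throughout; note that when $e=0$ one must choose the ruling onto which $C$ projects with the smaller degree, and the fiber of that ruling through $p_j$ still satisfies $(L\cdot C)_{p_j} \geq m_j$.

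The one point requiring a little care — and the main potential obstacle — is the identification of the local intersection multiplicity $(L \cdot C)_{p_j}$ with the ramification index $e_{\tilde p_j}$ of $\nu$ at the preimage. This is a standard fact: if $t$ is a local coordinate on $\Pot$ vanishing at $\pi(p_j)$, then $\pi^*t$ is a local equation for $L$ near $p_j$, and its pullback to $\tilde C$ via the normalization vanishes to order exactly $e_{\tilde p_j}$, which by definition of intersection multiplicity on the normalization equals $(L\cdot C)_{p_j}$. I would state this in one sentence, citing the relevant place in \cite{Hart:1977} or invoking the definition of intersection multiplicity on a curve directly, and not belabor it. Everything else is bookkeeping with the already-proved Theorem \ref{RHFe}.
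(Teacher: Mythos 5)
Your argument is correct and is essentially the paper's own proof: the paper simply asserts that each cusp $p_j$ yields a branch point with ramification index at least $m_j$ and then invokes Theorem \ref{RHFe}, which is exactly your route. Your additional justification — identifying $(L\cdot C)_{p_j}$ with the ramification index at the unique (unibranch) preimage and bounding it below by $m_j$ — just makes explicit what the paper leaves implicit, including the $e=0$ case via the lower-degree projection.
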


\begin{proof}
The cusps $p_j$ of $C$ gives branching points with ramification index bigger than or equal to the multiplicity $m_j$, so the result follows from Theorem \ref{RHFe}. 
%(see \cite[Proof of Lemma 3.1, p.446]{FlZa95}).
\end{proof}

\section{Rational cuspidal curves with four cusps}\label{4C}
In this section we give examples of rational cuspidal curves with four cusps on Hirzebruch surfaces, and our aim is to shed light on the question of how many and what kind of cusps a rational cuspidal curve on a Hirzebruch surface can have. 

%Constructing rational cuspidal curves with many cusps is a difficult task. On the projective plane we have seen that there are very few such curves, and they are indeed constructed with care. A natural place to look for curves on the Hirzebruch surfaces is to start out with the rational cuspidal curves on the projective plane, and this is our main point of attack. From these curves we construct rational cuspidal curves on the Hirzebruch surfaces with fourcusps, using birational transformations. Moreover, the connections between the projective plane and the Hirzebruch surfaces allow us to look for rational cuspidal curves starting out with curves with other singularities in addition to cusps. We do not explore the latter strategy in depth, but show an example and discuss some of the issues we face using this approach. 

%\subsection{Rational cuspidal curves with four cusps}
We are not able to construct many rational cuspidal curves with four cusps on the Hirzebruch surfaces. Indeed, on each $\Fe$, with $e \geq 0$, we construct one infinite series of rational cuspidal curves with four cusps. On $\PP$ and $\Fen$ we construct another three infinite series of rational cuspidal curves with four cusps, and on $\mathbb{F}_2$ we construct a single additional rational cuspidal curve with four cusps.

The following theorem presents the series of rational fourcuspidal curves that consists of curves on all the Hirzebruch surfaces. In Appendix \ref{Appendix:A} we construct some of the curves from a plane fourcuspidal curve using birational maps and \begin{verbatim}Maple\end{verbatim} \cite{Maple}. 
\begin{thm} \label{thm:4cusp}
For all $e \geq 0$ and $k \geq 0$, except for the pair ${(e,k)=(0,0)}$, there exists on the Hirzebruch surface $\Fe$ a rational cuspidal curve $C_{e,k}$ of type $(2k+1,4)$ with four cusps and cuspidal configuration 
$$[4_{k-1+e},2_3],[2],[2],[2].$$
\end{thm}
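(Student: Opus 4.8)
The strategy is to construct $C_{e,k}$ explicitly by an inductive/birational procedure starting from a single known curve, and then verify the claimed type and cuspidal configuration using the tools already assembled in Section~\ref{PR}. Concretely, I would proceed in two stages. First, establish a base case: exhibit, on some small Hirzebruch surface, a rational cuspidal curve whose cuspidal configuration is $[2_3],[2],[2],[2]$ (this is precisely the $k=0$, and for $e\geq 1$ case, since $[4_{-1+e},2_3]$ with $e=1$ reads $[2_3]$). The plane quintic $C_8$ of Table~\ref{tab:degree5} with parametrization $(s^4t:s^2t^3-s^5:t^5+2s^3t^2)$ and configuration $[2_3],[2],[2],[2]$ is the natural seed: transfer it to $\Fen$ (or $\mathbb{F}_0$) via one of the standard birational maps $\Po \dashrightarrow \Fe$ referenced in the text, tracking how the type and the singularities transform under blow-ups and blow-downs. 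Second, run an induction on $e$ and on $k$: show that a single elementary transformation (an elementary modification of the ruled surface, i.e.\ blow up a point on a fiber and blow down the strict transform of that fiber) carries $C_{e,k}$ to $C_{e\pm 1,k}$ or to $C_{e,k+1}$ while doing a controlled operation on the cusps — either leaving them untouched or incrementing the length of the $[4,\ldots]$-tail by one, or raising $e$ in the exponent. The bidegree bookkeeping is the linear-algebra part: an elementary transformation centered suitably changes $(a,b)=(2k+1,4)$ in the predictable way, and one checks it lands on the next member of the family.

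\textbf{Key steps, in order.} (1) Fix the explicit birational dictionary between $\Po$ and the $\Fe$'s, and record the effect on divisor classes and on multiplicity sequences of a cusp when the center of a blow-up lies on the curve, off the curve, or at (an infinitely near point of) a cusp. (2) Produce the base curve on $\Fen$ by transforming $C_8$, and verify by the genus formula (Corollary~\ref{genusfe}) that it is rational: with $(a,b)=(?,4)$ one needs $(b-1)(2a-2+be)/2 = \sum m_i(m_i-1)/2$, and the right-hand side for $[2_3],[2],[2],[2]$ is $3\cdot 1 + 3\cdot 1 = 6$, so $3(2a-2+4e)/2 = 6$, i.e.\ $2a-2+4e=4$; this pins down the admissible base pair, e.g.\ $(a,e)=(1,1)$ giving type $(1,4)$ on $\Fen$ — consistent with $(2k+1,4)=(1,4)$ at $k=0$. (3) For the induction in $k$: exhibit an elementary transformation (blow up a point at the relevant cusp, blow down a fiber) that sends type $(2k+1,4)$ to $(2k+3,4)$ and converts the cuspidal configuration $[4_{k-1+e},2_3]$ into $[4_{k+e},2_3]$, leaving the three $[2]$'s alone — again cross-checked by Corollary~\ref{genusfe}, since adding one ``$4$'' to the tail adds $4\cdot 3/2 = 6$ to the delta sum while $(b-1)(2a-2+be)/2$ increases by $3\cdot(2\cdot 2)/2 = 6$ when $a\mapsto a+2$. (4) For the induction in $e$: a similar single elementary transformation moves a curve on $\Fe$ of type $(2k+1,4)$ to one on $\Fe{+1}$ of type $(2k+1,4)$, at the cost of lengthening the $[4,\ldots]$-tail by one — consistent since passing $\Fe\to\Fe{+1}$ changes $be$ by $b=4$, again contributing $6$ to the genus balance. (5) Handle the excluded pair $(e,k)=(0,0)$ by observing the genus/Hurwitz constraints (Corollary~\ref{RHM}, Theorem~\ref{thm:multb}): on $\mathbb{F}_0$ with $b=4$ and the required tiny first coordinate, the inequality $2\min\{a,b\}+2g-2\geq\sum(m_j-1)$ or the arithmetic of Corollary~\ref{genusfe} fails, so the construction genuinely cannot start there. (6) Finally, confirm cuspidality throughout: each elementary transformation is chosen so that its center is either a smooth point of the curve or lies on a branch of an existing cusp, hence the germs stay unibranch and no node is created — this is where Theorem~\ref{thm:multb} ($m\leq b=4$) is used to guarantee the tail multiplicities never exceed $4$, matching the ``$4$'' in $[4_{k-1+e},2_3]$.

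\textbf{Main obstacle.} The delicate part is not the divisor-class arithmetic (which is linear and self-checking against the genus formula) but verifying that the chosen sequence of elementary transformations actually produces the \emph{stated} multiplicity sequence $[4_{k-1+e},2_3]$ on the nose — i.e.\ that the center of each blow-up meets the curve with exactly the right local intersection multiplicity, and in particular that one obtains a $[4,\ldots,4,2,2,2]$ tail rather than some other partition with the same delta invariant (there are distinct cuspidal configurations with equal $\sum m_i(m_i-1)/2$). This forces one to track the proximity/infinitely-near structure of the cusp explicitly through each step, presumably via local parametrizations; the symbolic computation in Appendix~\ref{Appendix:A} using \texttt{Maple} is exactly what pins this down for representative members, and the induction then propagates it. A secondary subtlety is checking that at each stage the special section $\RH_0$ or a fiber is not a component of the curve and that $a\geq 0$, $b>0$ remain valid so that $C\sim a\LH+b\RH$ is a legitimate irreducible class on $\Fe$ throughout the chain.
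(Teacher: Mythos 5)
Your overall route is the same as the paper's: start from the plane fourcuspidal quintic $C_8$, move it to a Hirzebruch surface, and then run elementary transformations (blow up a point of a fiber, contract that fiber) inductively in $e$ and $k$, checking types and configurations against the genus formula. However, two concrete points in your step (3) would fail as written. First, the center you propose for lengthening the $[4,\ldots]$-tail is wrong: blowing up \emph{at} the distinguished cusp partially resolves it (it removes a term from the multiplicity sequence), so the move ``blow up at the relevant cusp, blow down a fiber'' is the \emph{inverse} of the operation you need. The mechanism that actually deepens the cusp is to blow up a point of the special fiber $T$ \emph{away from the curve} (for the $e$-step, the point $E\cap T$ on the special section; for the $k$-step, a general point of $T$) and then contract $T$: because $T$ meets the curve only at the cusp with $(T\cdot C)_p=4$, the contraction prepends a $4$ to the multiplicity sequence. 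This hinges on the inductive invariant, which must be carried explicitly through every step, that at each stage there exists a fiber $T_{e,k}$ with $T_{e,k}\cdot C_{e,k}=4p_{e,k}$; your ``main obstacle'' paragraph gestures at this, but it has to be part of the induction hypothesis, not an afterthought.

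Second, a single elementary transformation always changes $e$ by $\pm 1$, so it cannot take $C_{e,k}$ of type $(2k+1,4)$ to $C_{e,k+1}$ of type $(2k+3,4)$ on the \emph{same} surface, as your plan asserts; the genus bookkeeping you did ($a\mapsto a+2$, $e$ fixed) is consistent but not realizable in one move. The paper resolves this by passing through \PP{} and back to \Fen{} (two elementary transformations per increment), which in effect steps $k$ by $2$; this is why it needs three initial series $k=0,1,2$ (built from $C_8$ by choosing different centers on the tangent line/fiber) before the induction, and why the induction is split into odd and even $k$. Your handling of the excluded pair $(e,k)=(0,0)$ via Corollary \ref{RHM} and the genus formula is fine (indeed more than the theorem requires, since it only refrains from claiming existence there), and your base-case identification of the $(1,4)$-curve on \Fen{} agrees with the paper. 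With the blow-up centers corrected and the $k$-step reorganized as a two-transformation move with the tangency invariant tracked, your argument becomes the paper's proof.
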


\begin{proof}
We will show that for each $e \geq 0$ there is an infinite series of curves on $\Fe$, and we show this by induction on $k$. The proof is split in two, and we treat the case of $k$ odd and even separately. We construct the series of curves $C_{e,0}$ for $e \geq 1$, and then we construct the initial series $C_{e,1}$ and $C_{e,2}$, with $e \geq0$. We only treat the induction to prove the existence of $C_{e,k}$ for odd values of $k$, as the proof for even values of $k$ is completely parallel. 

Let $C$ be the rational cuspidal curve of degree 5 on $\Po$ with cuspidal configuration $[2_3],[2],[2],[2]$. Let $p$ be the cusp with multiplicity sequence $[2_3]$, and let $T$ be the tangent line to $C$ at $p$. Then $T \cdot C=4 p+r$, with $r$ a smooth point on $C$. Blowing up at $r$, the strict transform of $C$ is a curve $C_{1,0}$ of type $(1,4)$ on $\Fen$ with cuspidal configuration $[2_3],[2],[2],[2]$. Letting $T_{1,0}$ denote the strict transform of $T$ and $p_{1,0}$ the strict transform of $p$, we have ${T_{1,0} \cdot C_{1,0}=4 p_{1,0}}$. We observe that $p_{1,0}$ is fiber tangential. Let $E_{1}$ denote the special section on $\Fen$, and let $s_{0,1}=E_{1} \cap T_{1,0}$. 

From $C_{1,0}$ we can proceed with the construction of curves on Hirzebruch surfaces in three ways. 

First, we show by induction on $e$ that the curves $C_{e,0}$ exist on the Hirzebruch surfaces $\Fe$, for all $e \geq 1$. We have already seen that $C_{1,0}$ exists on $\Fen$, and that there exists a fiber $T_{1,0}$ with the property that ${T_{1,0} \cdot C_{1,0}=4 p_{1,0}}$ for the first cusp $p_{1,0}$. Now assume $e \geq 2$ and that the curve $C_{e-1,0}$ of type $(1,4)$ exists on $\mathbb{F}_{e-1}$ with cuspidal configuration $[4_{e-2},2_3],[2],[2],[2]$, where $p_{e-1,0}$ denotes the first cusp and $T_{e-1,0}$ has the property that ${T_{e-1,0} \cdot C_{e-1,0}=4 p_{e-1,0}}$. Then, with $E_{e-1}$ the special section of $\mathbb{F}_{e-1}$, blowing up at the intersection $s_{e-1,0} \in E_{e-1} \cap T_{e-1,0}$ and contracting $T_{e-1,0}$, we get $C_{e,0}$ on $\Fe$ of type $(1,4)$ with cuspidal configuration $[4_{e-1},2_3],[2],[2],[2]$. Moreover, we note that there exists a fiber $T_{e,0}$ with ${T_{e,0} \cdot C_{e,0}=4 p_{e,0}}$. So the series exists on all $e \geq 1$ for $k=0$.

Second, note that from the curve $C_{1,0}$ on $\Fen$ it is possible to construct the curve $C_{0,1}$ on $\PP$ by blowing up at $p_{1,0}$ before contracting $T_{1,0}$. The curve $C_{0,1}$ is a curve of type $(3,4)$ with cuspidal configuration ${[2_3],[2],[2],[2]}$, and there is a fiber $T_{0,1}$ such that ${T_{0,1}\ldot C_{0,1}=4p_{0,1}}$. Blowing up at a point ${s_{0,1} \in T_{0,1} \setminus \{p_{0,1}\}}$ and contracting $T_{0,1}$ result in the curve $C_{1,1}$ on $\Fen$ of type $(3,4)$ with cuspidal configuration $[4,2_3],[2],[2],[2]$. Moreover, there exists a fiber $T_{1,1}$ with ${T_{1,1} \cdot C_{1,1}=4 p_{1,1}}$ and $p_{1,1} \notin E_1$. The same induction on $e$ as above proves that the series exists for $k=1$.

Third, note that from the curve $C_{1,0}$ on $\Fen$ it is possible to construct the curve $C_{0,2}$ on $\PP$ by blowing up at a point ${t_{1,0} \in T_{1,0} \setminus \{p_{1,0}, s_{1,0}\}}$ before contracting $T_{1,0}$. The curve $C_{0,2}$ is a curve of type $(5,4)$ with cuspidal configuration $[4,2_3],[2],[2],[2]$, and there is a fiber $T_{0,2}$ such that ${T_{0,2}\cdot C_{0,2}=4p_{0,2}}$. Blowing up at a point ${s_{0,2} \in T_{0,2} \setminus \{p_{0,2}\}}$ and contracting $T_{0,2}$ give the curve $C_{1,2}$ on $\Fen$ of type $(5,4)$ with cuspidal configuration ${[4_2,2_3],[2],[2],[2]}$. Moreover, there exists a fiber $T_{1,2}$ with ${T_{1,2} \cdot C_{1,2}=4 p_{1,2}}$ and $p_{1,2} \notin E_1$. The same induction on $e$ as above proves that the series exists for $k=2$.

Next assume $k \geq 3$, with $k$ odd, and that there exists a series of curves $C_{e,k-2}$ of type $(2k-3,4)$ on $\Fe$ for all $e \geq 0$ with cuspidal configuration $[4_{e+k-3},2_3],[2],[2],[2]$. Then, in particular, the curve $C_{1,k-2}$ on $\Fen$ with cuspidal configuration \linebreak $[4_{k-2},2_3],[2],[2],[2]$ exists. Moreover, there exists a fiber $T_{1,k-2}$ on $\Fen$ such that ${T_{1,k-2} \cdot C_{1,k-2}=4 p_{1,k-2}}$, where $p_{1,k-2}$ denotes the cusp with multiplicity sequence $[4_{k-2},2_3]$. With $E_1$ the special section on $\Fen$, let ${s_{1,k-2} \in E_1 \cap T_{1,k-2}}$. We now blow up at a point $t_{1,k-2} \in T_{1,k-2} \setminus \{p_{1,k-2},s_{1,k-2}\}$ and subsequently contract $T_{1,k-2}$. This gives the curve $C_{0,k}$ on $\PP$ of type $(2k+1,4)$ with cuspidal configuration $[4_{k-1},2_3],[2],[2],[2]$. With $T_{0,k}$ the strict transform of the exceptional line of the latter blowing up, we have ${T_{0,k} \cdot C_{0,k}=4 p_{0,k}}$. Blowing up at a point $s_{0,k} \in T_{0,k} \setminus \{p_{0,k}\}$ and contracting $T_{0,k}$ gives the curve $C_{1,k}$ on $\Fen$ of type $(2k+1,4)$ with cuspidal configuration $[4_k,2_3],[2],[2],[2]$. Moreover, there is a fiber $T_{1,k}$ with the property that ${T_{1,k}\cdot C_{1,k}=4p_{1,k}}$. With the same induction on $e$ as above, we get the series of curves $C_{e,k}$.
\end{proof}

There are three infinite series of rational fourcuspidal curves that can be found on the Hirzebruch surfaces $\PP$ and $\Fen$. 

Before we list these three series, we consider the rational cuspidal curves with four cusps on $\Fen$ that we can get by blowing up a single point on $\Po$. These curves represent examples from the series.
\begin{thm}
Let $C$ be the rational cuspidal curve with four cusps of degree 5 on $\Po$. The following rational cuspidal curves with four cusps on $\mathbb{F}_1$ can be constructed from $C$ by blowing up a single point on $\Po$. 
\begin{table}[H]
  \renewcommand\thesubtable{}
  \setlength{\extrarowheight}{2pt}
\centering
	{\noindent \begin{tabular*}{0.75\linewidth}{@{}c@{\extracolsep{\fill}}c@{}c@{}l}
	\tr
	 \textnormal{\textbf{\# Cusps}}  & \textnormal{\textbf{Curve}} & \textnormal{\textbf{Type}} & \textnormal{\textbf{Cuspidal configuration}} \\
	\tr
	\multirow{3}{0pt}{4} &  $C_1$ & $(0,5)$& $[2_3],[2],[2],[2]$  \\
	 &$C_2$ &$(1,4)$ & $[2_3],[2],[2],[2]$ \\
	 &$C_3$ &$(2,3)$  & $[2_2],[2],[2],[2]$ \\
	\br
	\end{tabular*}}
	\caption {Rational cuspidal curves on $\mathbb{F}_1$ with four cusps.}
	\label{tab:prccw4}
	\end{table}
\end{thm}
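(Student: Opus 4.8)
The plan is a short case analysis according to the multiplicity at the blown-up point. Recall that $\mathbb{F}_1$ is precisely the blow-up $\pi\colon\mathbb{F}_1\to\Po$ of the projective plane at one point $O$, that the exceptional curve is the special section $\RH_0$, and that the strict transform of a line through $O$ is a fiber $\LH$. Hence $\RH\sim\LH+\RH_0$ is the pull-back of a general line of $\Po$, so $\pi^{*}$ of a degree-$d$ curve is linearly equivalent to $d\RH$. Consequently, if $C\subset\Po$ has degree $d$ and multiplicity $\mu$ at $O$, then from $\pi^{*}C\sim\tilde C+\mu\RH_0\sim d\RH$ one gets that the strict transform satisfies $\tilde C\sim d\RH-\mu\RH_0\sim\mu\LH+(d-\mu)\RH$, i.e.\ $\tilde C$ is of type $(\mu,d-\mu)$.

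Now let $C$ be the rational cuspidal quintic with cuspidal configuration $[2_3],[2],[2],[2]$ (the curve $C_8$ of Table~\ref{tab:degree5}), and let $p$ be its cusp with multiplicity sequence $[2_3]$. Since every cusp of $C$ has multiplicity $2$, a point $O\in\Po$ has multiplicity $\mu\in\{0,1,2\}$ on $C$, and I would go through the three cases. For $\mu=0$, i.e.\ $O\notin C$, the strict transform $\tilde C\cong C$ is of type $(0,5)$ with the same cuspidal configuration; this is $C_1$. For $\mu=1$, i.e.\ $O$ a smooth point of $C$, blowing up a smooth point of a curve does not change its singularities, $\tilde C$ meets $\RH_0$ transversally in one smooth point, and $\tilde C$ is of type $(1,4)$ with cuspidal configuration $[2_3],[2],[2],[2]$; this is $C_2$. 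For $\mu=2$ with $O=p$, the curve $\tilde C$ is of type $(2,3)$: the three ordinary cusps lying off $O$ are unaffected, and at the point of $\RH_0$ on $\tilde C$ the cusp acquires multiplicity sequence $[2_2]$ (the leading entry of $[2_3]$ is deleted) with no new singularity created there, so the cuspidal configuration is $[2_2],[2],[2],[2]$; this is $C_3$.

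In each case $\tilde C$ is reduced and irreducible, since $\pi$ is birational and $\RH_0$ is not a component of $\tilde C$, and it is rational, since $\pi$ restricts to a birational morphism $\tilde C\to C$ with $g(C)=0$. As a consistency check, substituting the three types and cuspidal configurations into the genus formula of Corollary~\ref{genusfe} with $e=1$ yields $g=0$ in all three cases.

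The one genuinely local point is the claim in the case $\mu=2$ that blowing up the cusp $p$ produces a cusp with multiplicity sequence $[2_2]$ and nothing worse along $\RH_0$; this is the standard effect of a single blow-up on a multiplicity sequence, together with the observation that the $[2_3]$ cusp has a double line as tangent cone, so that the entire intersection $\tilde C\cdot\RH_0=2$ is supported at that one point. I expect this local check and the bookkeeping that converts ``degree and multiplicity at $O$'' into the type $(a,b)$ to be the only steps requiring attention; everything else is immediate.
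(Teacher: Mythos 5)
Your proposal is correct and follows essentially the same route as the paper: the paper's proof likewise obtains $C_1$, $C_2$, $C_3$ by blowing up, respectively, a point off $C$, a smooth point of $C$, and the cusp with multiplicity sequence $[2_3]$, with the type and cuspidal configuration read off exactly as you do. Your explicit divisor-class bookkeeping ($\tilde C\sim\mu\LH+(d-\mu)\RH$) and the local check that $[2_3]$ becomes $[2_2]$ are left implicit in the paper, but they are the same computations.
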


\begin{proof}
The curve $C_1$ is constructed by blowing up any point $r$ on $\Po \setminus C$. Note that if $r$ is on the tangent line to a cusp on $C$, then $C_1$ has cusps that are fiber tangential. If $r$ is only on tangent lines of smooth points on $C$, then $C_1$ has smooth fiber tangential points. 

The curve $C_2$ is constructed by blowing up any smooth point $r$ on $C$. Again, if $r$ is on a tangent line of $C$, $C_2$ will have points that are fiber tangential.

The curve $C_3$ is constructed by blowing up the cusp with multiplicity sequence $[2_3]$.
\end{proof}

\noi The fact that we can construct curves where the curve have nontransversal intersection with some fiber(s) is crucial in the later constructions. Although we do not get new cuspidal configurations in this first step, cusps can sometimes be constructed later on.

We now give the three series of rational cuspidal curves with four cusps on $\PP$ and $\Fen$. For notational purposes we denote these surfaces by $\Fh$, with $h \in \{0,1\}$, in the theorems.
\begin{thm}
For $h\in\{0,1\}$ and all integers $k\geq 0$, except the pair $(h,k)=(0,0)$, there exists on the Hirzebruch surface $\Fh$ a rational cuspidal curve $C_{h,k}$ of type $(3k+1-h,5)$ with four cusps and cuspidal configuration 
$$[4_{2k-1+h},2_3],[2],[2],[2].$$
\end{thm}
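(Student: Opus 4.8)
### Proof Proposal

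The plan is to mimic exactly the inductive, birational-transformation strategy used in the proof of Theorem~\ref{thm:4cusp}, but starting from a plane quintic curve and building up type $(3k+1-h,5)$ curves on $\Fh$ rather than type $(2k+1,4)$ curves on $\Fe$. The key observation is that since the projection onto $\Pot$ must have degree $b=5$, we need to feed the induction with a curve that has a cusp through which some fiber passes with intersection multiplicity $5$. So the first step is to produce, from a degree-$5$ rational cuspidal plane curve with configuration $[2_3],[2],[2],[2]$, a curve on $\Fen$ of type $(1,5)$ together with a fiber $T$ meeting it at a single cusp $p$ with $(T\cdot C)_p=5$; concretely, one takes a suitable smooth point $q\in C$ lying on the tangent line to the cusp with multiplicity sequence $[2_3]$ — but now one needs an \emph{inflectional} behaviour so that the line through $q$ and $p$ (or an appropriate line tangent at $p$) meets $C$ with total multiplicity $5$ at $p$. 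Here the subclassification of degree-$5$ curves in Table~\ref{tab:degree5} (the curve $C_8$) is the right input: one checks that for the curve $C_8$ the tangent line at the $[2_3]$-cusp can be arranged to be hyperosculating, and blowing up the residual intersection point yields $C_{1,0}$ on $\Fen$ of type $(1,5)$ with $T_{1,0}\cdot C_{1,0}=5p_{1,0}$.

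Once this base case is established, I would run the same three-pronged construction as in Theorem~\ref{thm:4cusp}. First, an induction on $h$ (here only the step $\Fen\to\PP$, since $h\in\{0,1\}$): blowing up the special section point on $T_{1,0}$ and contracting $T_{1,0}$ trades the type, moving between $\Fen$ and $\PP$ while keeping a fiber $T$ with $T\cdot C=5p$; this handles the $-h$ correction in the type $(3k+1-h,5)$ and accounts for the $+h$ in the exponent $2k-1+h$. Second, constructing the initial curves $C_{h,0}$ and $C_{h,1}$ explicitly: blowing up a point on $T_{1,0}$ away from $p_{1,0}$ and the special section, then contracting $T_{1,0}$, produces a new cusp of type $[4,\dots]$ merged onto the existing $[2_3]$-cusp (because the fiber contributes intersection $5$ at the cusp, raising the first few multiplicities to $4$). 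Third, the induction on $k$: assuming $C_{h,k-1}$ exists on $\Fh$ of type $(3(k-1)+1-h,5)$ with configuration $[4_{2(k-1)-1+h},2_3],[2],[2],[2]$ and an auxiliary fiber meeting it at $5$ times the big cusp, one performs one elementary transformation (blow up a general point of the auxiliary fiber, contract the fiber, pass through $\PP$ and back to $\Fen$ as needed) to obtain $C_{h,k}$; each such step adds $3$ to $a$ and $2$ to the exponent of the $4$'s, matching $3k+1-h$ and $2k-1+h$. One must also verify at each stage, using Corollary~\ref{genusfe}, that the genus stays $0$ — i.e. that $\tfrac{(5-1)(2a-2+5\cdot 0)}{2}=\sum\tfrac{m_i(m_i-1)}{2}$, which is $4(a-1)=6(2k-1+h)+3\cdot 3+3\cdot 1$ for $a=3k+1-h$, a routine identity — and that the three small cusps $[2]$ are genuinely preserved (not smoothed) under each contraction, which follows because the contracted fiber $T$ passes through none of them.

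The main obstacle, as in the original theorem, will be \textbf{bookkeeping the cuspidal configuration through each elementary transformation}: one has to track carefully how the multiplicity sequence of the big cusp transforms when a fiber through it with contact $5$ is contracted, and confirm that the effect is precisely to prepend (or lengthen) a block of $4$'s by the expected amount while leaving the trailing $[2_3]$ intact. This requires the standard analysis of how multiplicity sequences behave under blow-up and blow-down (cf.\ \cite{FlZa95}), together with a check that the infinitely near points being created are where we claim they are — in particular that the constructed curve really has a fiber $T_{h,k}$ with $T_{h,k}\cdot C_{h,k}=5p_{h,k}$, so the induction can continue. A secondary technical point is ensuring the excluded pair $(h,k)=(0,0)$ is genuinely the only obstruction: for $(h,k)=(0,0)$ the claimed type would be $(1,5)$ on $\PP$, and one should note why the base construction forces us onto $\Fen$ first rather than directly onto $\PP$. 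Modulo these verifications — all of which are parallel to the computations already carried out for Theorem~\ref{thm:4cusp} and detailed in the appendix — the argument goes through.
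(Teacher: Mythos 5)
There is a genuine gap: your base case cannot be constructed, and the mechanism you describe would not produce the claimed multiplicity sequences even if it could. You ask for a fiber $T$ meeting the curve only at the big cusp with $(T\cdot C)_p=5$, obtained from a ``hyperosculating'' tangent at the $[2_3]$-cusp of the plane quintic $C_8$. No such line exists: a unibranched germ of multiplicity $2$ with multiplicity sequence $[2_3]$ is an $A_6$-singularity with semigroup $\langle 2,7\rangle$, so its tangent line meets the branch with multiplicity $4$, $6$ or $7$ --- never $5$ --- and B\'ezout on the quintic then forces the contact to be exactly $4$. This is precisely why the paper works with $T\cdot C=4p+r$, where $r$ is a smooth residual point of $C$ on $T$. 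Moreover, even granting a contact-$5$ fiber, contracting it would prepend a $5$, not a $4$, to the multiplicity sequence of the cusp, so your stated mechanism (``the fiber contributes intersection $5$ at the cusp, raising the first few multiplicities to $4$'') is not coherent: to create the $4$'s in $[4_{2k-1+h},2_3]$ one must contract a fiber whose contact with the curve at the cusp is exactly $4$. Your bookkeeping at the base is also off: blowing up a smooth point of the plane quintic yields a curve of type $(1,4)$ on $\Fen$, not $(1,5)$, and for $(h,k)=(1,0)$ the theorem asserts type $(0,5)$, which is obtained by blowing up a point of the tangent line lying \emph{off} the curve.

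The paper's actual argument keeps your overall strategy (induction on $k$ via elementary transformations starting from the quintic with configuration $[2_3],[2],[2],[2]$) but runs on the opposite geometric input. One starts from $T\cdot C=4p+r$ and blows up a point $t\in T\setminus\{p,r\}$ to get $C_{1,0}$ of type $(0,5)$ on $\Fen$ with $T_{1,0}\cdot C_{1,0}=4p_{1,0}+r_{1,0}$. At each step one blows up the residual smooth point $r$ on the fiber and contracts the fiber; since the contracted fiber then meets the curve only at the cusp, with multiplicity $4$, the contraction prepends a $4$ to its multiplicity sequence, while the image of the exceptional curve is a new fiber again meeting the new curve as $4p'+r'$, so the step iterates. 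Each such elementary transformation moves between $\Fen$ and $\PP$, and two of them account for the increment $k\mapsto k+1$, giving $[4_{2k-1},2_3]$ on $\PP$ and $[4_{2k},2_3]$ on $\Fen$. Without replacing your contact-$5$ setup by this contact-$4$-plus-residual-point setup, the proposed proof does not go through.
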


\begin{proof}
The proof is by construction and induction on $k$. Let $C$ be a rational cuspidal curve of degree 5 on $\Po$ with cuspidal configuration ${[2_3],[2],[2],[2]}$. Let $p$ be the cusp with multiplicity sequence $[2_3]$, and let $T$ be the tangent line to $C$ at $p$. There is a smooth point $r \in C$, such that $T \cdot C=4 p+r$. Blowing up at any point $t \in T \setminus \{p,r\}$, we get the curve $C_{1,0}$ of type $(0,5)$ and cuspidal configuration ${[2_3],[2],[2],[2]}$ on $\Fen$. Moreover, with $T_{1,0}$ the strict transform of $T$ and $p_{1,0}$, $r_{1,0}$ the strict transforms of the points $p$ and $r$, we have ${T_{1,0} \cdot C_{1,0}=4 p_{1,0}+r_{1,0}}$.

Now assume that the curve $C_{1,k-1}$ of type $(3(k-1),5)$ exists on $\Fen$ with cuspidal configuration $[4_{2(k-1)},2_3],[2],[2],[2]$, and the intersection $T_{1,k-1} \cdot C_{1,k-1}=4 p_{1,k-1}+r_{1,k-1}$ for a fiber $T_{1,k-1}$ and points as above. Then blowing up at $r_{1,k-1}$ and contracting $T_{1,k-1}$, we get a curve $C_{0,k}$ on $\PP$ of type $(3k+1,5)$ and cuspidal configuration $[4_{2k-1},2_3],[2],[2],[2]$. Moreover, there is a fiber $T_{0,k}$ with the property that $T_{0,k} \cdot C_{0,k}=4 p_{0,k}+r_{0,k}$. Blowing up at $r_{0,k}$ and contracting $T_{0,k}$, we get a rational cuspidal curve $C_{1,k}$ of type $(3k,5)$ on $\Fen$ with cuspidal configuration $[4_{2k},2_3],[2],[2],[2]$.
\end{proof}

\begin{thm}
For $h\in\{0,1\}$ and all integers $k\geq 0$, except the pair $(h,k)=(0,0)$, there exists on the Hirzebruch surface $\Fh$ a rational cuspidal curve of type $(2k+2-h,4)$ with four cusps and cuspidal configuration 
$$[3_{2k-1+h},2],[2_3],[2],[2].$$
\end{thm}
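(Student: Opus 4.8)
The plan is to run the same construction-and-induction scheme used in Theorem~\ref{thm:4cusp} and the two preceding theorems, the only essential difference being that here one grows a chain of $3$'s rather than of $4$'s or $2$'s. I would phrase the induction so that $C_{1,k}$ on $\Fen$ is of type $(2k+1,4)$ with cuspidal configuration $[3_{2k},2],[2_3],[2],[2]$ and the intermediate curve $C_{0,k}$ on $\PP$ is of type $(2k+2,4)$ with cuspidal configuration $[3_{2k-1},2],[2_3],[2],[2]$; the cases $h=1$ and $h=0$ are then just the two halves of one chain of elementary transformations $C_{1,0}\dashrightarrow C_{0,1}\dashrightarrow C_{1,1}\dashrightarrow C_{0,2}\dashrightarrow\cdots$, and the excluded pair $(h,k)=(0,0)$ is excluded precisely because the formal configuration $[3_{-1},2],[2_3],[2],[2]$ is meaningless and is never reached by the chain.

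For the base case I would start from the quintic $C_8$ of Table~\ref{tab:degree5}, with cuspidal configuration $[2_3],[2],[2],[2]$ and parametrization $(s^4t:s^2t^3-s^5:t^5+2s^3t^2)$. Choosing one of the ordinary cusps $p$ and its tangent line $T$, one has $(T\cdot C_8)_p=3$, so the residual cycle $T\cdot C_8-3p$ has degree $2$; from the parametrization one checks that for a suitable choice of simple cusp these residual points are smooth points of $C_8$ (in particular $T$ does not pass through another cusp). Blowing up one of them realizes the blow-up of $\Po$ at that point as $\Fen$ and yields a curve $C_{1,0}$ of type $(1,4)$ with cuspidal configuration $[2],[2_3],[2],[2]$ --- the case $k=0$, $h=1$ --- for which the strict transform $T_{1,0}$ of $T$ is a fiber with $T_{1,0}\cdot C_{1,0}=3p_{1,0}+r_{1,0}$, $r_{1,0}$ a smooth point of $C_{1,0}$ off the special section $E_1$. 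This last configuration --- a fiber meeting the distinguished cusp with multiplicity $3$ and one further transversal point lying off $E_1$ --- is the inductive hypothesis.

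For the inductive step, given $C_{1,k-1}$ on $\Fen$ of type $(2k-1,4)$ with configuration $[3_{2k-2},2],[2_3],[2],[2]$ and a fiber $T_{1,k-1}$ with $T_{1,k-1}\cdot C_{1,k-1}=3p_{1,k-1}+r_{1,k-1}$, $r_{1,k-1}\notin E_1$, I would blow up at $r_{1,k-1}$ and then contract the strict transform of $T_{1,k-1}$. The blow-up separates $C$ and $T$ at $r_{1,k-1}$, so this strict transform is a $(-1)$-curve meeting the strict transform of $C$ in the single point $p_{1,k-1}$ with total intersection $3$; contracting it is an elementary transformation $\Fen\dashrightarrow\PP$ (it lowers the index because $r_{1,k-1}\notin E_1$) whose only effect on the curve near $p$ is to prepend one more $3$ to the multiplicity sequence, giving the cusp $[3_{2k-1},2]$ and leaving the other three cusps untouched. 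The strict transform of the exceptional curve of the blow-up becomes a new fiber $T_{0,k}$ with $T_{0,k}\cdot C_{0,k}=3p_{0,k}+r_{0,k}$, so one may iterate: blowing up $r_{0,k}$ and contracting $T_{0,k}$ returns to $\Fen$ with $C_{1,k}$ of type $(2k+1,4)$ and configuration $[3_{2k},2],[2_3],[2],[2]$. All blow-ups are centred at smooth points of the curve, so each step preserves $g=0$; this can be cross-checked directly from Corollary~\ref{genusfe}, and the constructions are consistent with Theorem~\ref{thm:multb} (the distinguished cusp always has multiplicity $3\le 4$) and with Corollary~\ref{RHM} (one always has $2b+2g-2=6\ge(3-1)+3(2-1)=5$).

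The main obstacle is the local computation inside the inductive step: proving that contracting a $(-1)$-curve which meets the distinguished cusp with local intersection number $3$ transforms the multiplicity sequence $[3_j,2]$ into exactly $[3_{j+1},2]$ (equivalently, that under the inverse blow-up the image point has multiplicity $3$ with tangent cone supported on a single line), and that the resulting curve again carries a fiber, off the special section, in the position needed to continue. A secondary point is to pin down, for the explicit quintic $C_8$, that a simple cusp with the required tangent-line behaviour really exists. As in Theorem~\ref{thm:4cusp}, the cleanest way to make all of this transparent is to record the birational maps explicitly --- exactly the kind of computation carried out in the appendix for other curves of the paper --- and read off types and multiplicity sequences from the resulting parametrizations.
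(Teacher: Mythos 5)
Your proposal is correct and follows essentially the same route as the paper: start from the fourcuspidal plane quintic, use the tangent line at an ordinary cusp (meeting it with multiplicity $3$ plus two smooth residual points), blow up a residual point to land on $\Fen$, and then induct by elementary transformations (blow up the residual smooth intersection point of the fiber, contract the fiber), alternating between $\Fen$ and $\PP$ and prepending a $3$ to the distinguished multiplicity sequence at each step. The local bookkeeping you flag as the main obstacle is likewise left implicit in the paper's argument, so your write-up matches its proof in both structure and level of detail.
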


\begin{proof}
The proof is by construction and induction on $k$. Let $C$ be the rational cuspidal curve of degree 5 on $\Po$ with cuspidal configuration ${[2_3],[2],[2],[2]}$. Let $q$ be one of the cusps with multiplicity sequence $[2]$, and let $T$ be the tangent line to $C$ at $q$. Then there are smooth points $r,s \in C$, such that ${T \cdot C=3 q+r+s}$. Blowing up at $s$, we get the curve $C_{1,0}$ of type $(1,4)$ and cuspidal configuration $[2_3],[2],[2],[2]$ on $\Fen$. Moreover, with $T_{1,0}$ the strict transform of $T$ and $p_{1,0}$, $r_{1,0}$ the strict transforms of the points $p$ and $r$, we have $T_{1,0} \cdot C_{1,0}=3 p_{1,0}+r_{1,0}$.

Now assume that the curve $C_{1,k-1}$ of type $(2k-1,4)$ exists on $\Fen$ with cuspidal configuration $[3_{2k-2},2],[2_3],[2],[2]$, and the intersection $T_{1,k-1} \cdot C_{1,k-1}=3 p_{1,k-1}+r_{1,k-1}$ for a fiber $T_{1,k-1}$ and points as above. Then blowing up at $r_{1,k-1}$ and contracting $T_{1,k-1}$, we get a curve $C_{0,k}$ on $\PP$ of type $(2k+2,4)$ and cuspidal configuration $[3_{2k-1},2],[2_3],[2],[2]$. Moreover, there is a fiber $T_{0,k}$ with the property that $T_{0,k} \cdot C_{0,k}=3 p_{0,k}+r_{0,k}$. Blowing up at $r_{0,k}$ and contracting $T_{0,k}$, we get a rational cuspidal curve $C_{1,k}$ of type $(2k+1,4)$ on $\Fen$ with cuspidal configuration $[3_{2k},2],[2_3],[2],[2]$.
\end{proof}

%\begin{thm} 
%For all $h\in\{0,1\}$ and $k\geq 0$, except the pair $(h,k)=(0,0)$, there exists a rational cuspidal curve on $\Fh$ of type $(2k+2+h,4)$ with four cusps and cuspidal configuration 
%$$[3_{2k+h}],[2],[2],[2].$$
%\end{thm}

%\begin{proof}

%\end{proof}

\begin{thm}\label{2erne}
For $h\in\{0,1\}$, all integers $k \geq 2$, and every choice of $n_j \in \mathbb{N}$, with $j=1,\ldots,4$, such that $\sum_{j=1}^4 n_j=2k+h$, there exists on the Hirzebruch surface $\Fh$ a rational cuspidal curve of type $(k+1-h,3)$ with four cusps and cuspidal configuration $$[2_{n_1}],[2_{n_2}],[2_{n_3}],[2_{n_4}].$$
\end{thm}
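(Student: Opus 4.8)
The plan is to follow the same construction-by-induction template used in the preceding three theorems, but now starting from a plane quartic (or the degree-5 curve) with four cusps and producing, on $\Fh$, curves of type $(k+1-h,3)$ whose cuspidal configuration consists of four cusps, each with multiplicity sequence a string of $2$s. The key new feature here is that the lengths $n_j$ of the four multiplicity sequences can be distributed arbitrarily subject only to $\sum n_j = 2k+h$, so the induction must be set up so that at each step we may choose \emph{which} cusp gains length. First I would fix a starting curve: the degree-$5$ curve $C$ on $\Po$ with cuspidal configuration $[2_3],[2],[2],[2]$, or more conveniently a rational quartic with configuration $[2],[2],[2]$ type data, and use the birational transformations recalled in Section \ref{PR} (blow up a smooth point of $C$ lying on the tangent line to a chosen cusp, then contract that tangent fiber) to move between $\PP$ and $\Fen$ while raising the bidegree in the controlled way $(k+1-h,3)$. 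Each such elementary transformation, applied at a point on the tangent line $T$ to a cusp $q$ with $T\cdot C = 3q + r + s$, has the effect of lengthening the multiplicity sequence of $q$ by one $2$ (turning $[2_{n}]$ into $[2_{n+1}]$) while leaving the other three cusps untouched, and it alternates the surface between $\Fen$ and $\PP$, raising $k$ by one every two steps. This is exactly the mechanism already exhibited in the proof of the $[3_{2k-1+h},2],[2_3],[2],[2]$ series, restricted to the case where every cusp is an ordinary (A$_{2n}$-type) cusp.

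The key steps, in order, are: (1) establish the base case — exhibit on $\Fh$ for the smallest allowed $k$ (namely $k=2$, with all $n_j$ equal to small values summing to $2k+h$) a rational cuspidal curve of type $(k+1-h,3)$ with four ordinary cusps; here I would take the known degree-$5$ plane curve $C_8$ from Table \ref{tab:degree5} with configuration $[2_3],[2],[2],[2]$ and blow down/transform it suitably, checking the bidegree and genus via Corollary \ref{genusfe} and the bound $m\le b$ of Theorem \ref{thm:multb} (so $b=3$ is forced as soon as some cusp has a tangent fiber of intersection $3$). (2) Describe the inductive elementary transformation: given $C_{h,k}$ on $\Fh$ with configuration $[2_{n_1}],\dots,[2_{n_4}]$ and a distinguished cusp $p_j$ whose tangent line $T$ meets $C$ with $T\cdot C = 3p_j + r + s$, blow up at a general point of $T$ and contract $T$ to obtain a curve on the other Hirzebruch surface of bidegree $(k+2-h',3)$ with $n_j$ increased by one. (3) Verify that after such a transformation there is \emph{again} a fiber meeting the (chosen, possibly different) cusp in a point of intersection multiplicity $3$, so the induction propagates and the choice of which $n_j$ to increment is free at every stage. (4) Count: starting from the base configuration and performing $2k+h - (\text{base sum})$ such transformations, distributing the increments according to the prescribed $n_j$, yields every admissible tuple. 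Throughout, genus-$0$ is preserved because blowing up a smooth point of a curve and contracting a fiber are birational on the curve.

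The main obstacle I expect is step (3): controlling the intersection of the transformed curve with fibers after contracting $T$, i.e. confirming that \emph{every} cusp of the new curve still admits a fiber meeting it transversally-plus-one (intersection $=3$ at the cusp, the remaining intersection being simple points off the cusp and off the section $E$), so that the induction can continue and can target any of the four cusps. One must check that the contracted fiber $T$ does not pass through the other three cusps or through the special section in a way that would create the wrong bidegree or additional tangencies, and that the "free" points $r,s$ used for the blow-ups can always be chosen generically on $T$. A secondary subtlety is the exclusion of $(h,k)=$ small values and the appearance of $h$ in the bidegree and in $\sum n_j$: one must track carefully, as in the earlier proofs, how a single transformation shifts $(e,k)$ and the parity of $\sum n_j$, so that the two surfaces $\PP$ and $\Fen$ together realize exactly the stated range $k\ge 2$, $\sum n_j = 2k+h$. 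Finally, one should remark (or defer to the appendix) that the base curves can be produced explicitly from the degree-$5$ plane curve, making the construction completely effective.
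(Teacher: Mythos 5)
Your overall template (elementary transformations between $\PP$ and $\Fen$, induction on $k$, starting from a low-degree plane curve) is the right one, but the transformation at the heart of your induction is not the one that produces this configuration, and as described it cannot even occur on the curves in question. You propose to use the cuspidal tangent $T$ with $T\cdot C = 3q+r+s$ and claim that blowing up a point of $T$ and contracting $T$ turns $[2_{n}]$ into $[2_{n+1}]$. That is the mechanism of the $[3_{2k-1+h},2],[2_3],[2],[2]$ series: contracting a $(-1)$-fiber that meets a cusp with intersection multiplicity $3$ prepends a $3$ to the multiplicity sequence, so it takes you out of the class $[2_{n_1}],[2_{n_2}],[2_{n_3}],[2_{n_4}]$ altogether. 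Moreover, for a curve of type $(k+1-h,3)$ every fiber meets $C$ with total intersection number $3$, so an intersection cycle $3q+r+s$ (total $5$) is impossible; if the cuspidal tangent is a fiber there is no residual smooth point on it at all. The move that actually works, and the one the paper uses, is to take for each cusp $p_j$ the fiber $L_j$ through $p_j$ transverse to the cuspidal tangent, so that $L_j\cdot C = 2p_j + r_j$ with $r_j$ a smooth point, then blow up $r_j$ and contract $L_j$: contracting a $(-1)$-curve meeting the cusp with multiplicity exactly $2$ prepends a $2$, giving $[2_{n_j+1}]$.

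Two further points are glossed over. First, the claimed freedom to increment any $n_j$ at every stage needs an argument: a round trip $\PP\to\Fen\to\PP$ consists of two such transformations, and the paper's induction performs both on the same cusp, which preserves the parity of each $n_j$; to reach the other parity classes it constructs separate base curves with configurations $[2_2],[2_2],[2],[2]$ and $[2_2],[2_2],[2_2],[2_2]$, and this step requires verifying (done there by direct computation) that the two residual points being blown up do not lie on a common $(0,1)$-curve, since otherwise the resulting surface and type are not as claimed. Second, your base case is vague: the paper starts from the tricuspidal plane quartic and manufactures the fourth ordinary cusp by blowing up the two residual intersection points of a general tangent line at a smooth point and contracting that line (alternatively, from the cuspidal cubic); starting from the quintic $C_8$ would require explaining how its $[2_3]$ cusp is reduced to an ordinary cusp, which ``blow down/transform it suitably'' does not provide. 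Your step (3) concern (persistence of suitable fibers and non-fiber-tangentiality of the cusps) is a genuine point that must be tracked through the induction, but the primary gap is the incorrect inductive transformation.
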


\begin{proof}
We prove the existence of the curves on $\PP$ by induction on $k$. In the proof we show that any curve on $\Fen$ can be reached from a curve on $\PP$ by an elementary transformation, hence we prove the theorem for $h\in \{0,1\}$. 

First we observe that a choice of $n_j$ such that the condition $\sum_{j=1}^4 n_j=2k$ means that either all four $n_j$ are odd, two are odd and two are even, or all four are even. We split the proof into these three cases, and prove only the first case completely. The other two can be dealt with in the same way once we have proved the existence of a first curve.

We now prove the theorem when all $n_j$ are odd. Let $C$ be a rational cuspidal curve on $\Po$ of degree 4 with three cusps and cuspidal configuration ${[2],[2],[2]}$ for cusps $p_j$, $j=1,2,3$. Let $p_4$ be a general smooth point on $C$ and let $T$ be the tangent line to $C$ at $p_4$. Then ${T \cdot C = 2 p_4+t_1+t_2}$, where $t_1,t_2$ are two smooth points on $C$. Blowing up at $t_1$ and $t_2$ and contracting $T$, we get a rational cuspidal curve on $\PP$ of type $(3,3)$ with four ordinary cusps.

Fixing notation, we say that we have a curve $C_2$ on $\PP$ of type $(3,3)$ and four cusps $p^2_j$, $j=1,\ldots,4$, all with multiplicity sequence $[2]$. Since the choice of $p_4 \in \Po$ was general, there are four $(1,0)$-curves $L^2_j$ such that $$L^2_j \cdot C_2=2 p^2_j+r^2_j,$$ for smooth points $r^2_j \in C_2$. Now assume that we have a curve $C_{k-1}$ on $\PP$ of type $((k-1)+1,3)$, with cuspidal configuration ${[2_{n_1-2}],[2_{n_2}],[2_{n_3}],[2_{n_4}]}$ such that all $n_j$ are odd, and such that there exist fibers $L^{k-1}_j$ with $$L^{k-1}_j \cdot C_{k-1}=2 p^{k-1}_j+r^{k-1}_j,$$ for smooth points $r^{k-1}_j$ on $C_{k-1}$. 

We blow up at $r^{k-1}_1$, contract the corresponding $L^{k-1}_1$ and get a curve $C_{1,k-1}$ on $\Fen$ of type $(k-1,3)$ with cuspidal configuration ${[2_{n_1-1}],[2_{n_2}],[2_{n_3}],[2_{n_4}]}$. Moreover, since $r^{k-1}_1$ was not fiber tangential, we have that $r^{1,k-1}_1 \notin E_1$, and the strict transform of the exceptional fiber of the blowing up, $L^{1,k-1}_1$, has intersection with $C_{1,k-1}$, $$L^{1,k-1}_1 \cdot C_{1,k-1}=2 p^{1,k-1}_1+r^{1,k-1}_1.$$ Blowing up at $r^{1,k-1}_1$ and contracting $L^{1,k-1}_1$ bring us back to $\PP$ and a curve $C_{k}$ of type $(k+1,3)$ and cuspidal configuration $[2_{n_1}],[2_{n_2}],[2_{n_3}],[2_{n_4}]$. This takes care of the case when all $n_j$ are odd.

To prove the theorem when two $n_j$ are even or all $n_j$ are even, we only show that there exist curves on $\PP$ of the right type and cuspidal configurations $[2_2],[2_2],[2],[2]$ and ${[2_2],[2_2],[2_2],[2_2]}$. The rest of the argument is then similar to the above. To get the first curve, we blow up $C_2$ in $r^2_1$ and $r^2_2$ and contract $L^2_1$ and $L^2_2$. This is a curve $C_3$ of type $(4,3)$ with cuspidal configuration ${[2_2],[2_2],[2],[2]}$. The curve is on $\PP$ since it can be shown with direct calculations in \verb+Maple+ that $r^2_1$ and $r^2_2$ are not on the same $(0,1)$-curve. To get the second curve, we blow up at the analogous $r^3_3$ and $r^3_4$ on the curve $C_3$, before contracting $L^3_3$ and $L^3_4$. We are again on $\PP$ by a similar argument to the above, and the curve $C_4$ is of type $(5,3)$ and has cuspidal configuration $[2_2],[2_2],[2_2],[2_2]$.
\end{proof}

\noi Note that the construction of the curves in Theorem \ref{2erne} can also be done from the rational cuspidal cubic on $\Po$.
\begin{proof}[Alternative proof of Theorem \ref{2erne}]
Let $C$ be the rational cuspidal cubic on $\Po$. Let $s$ be a general point on $\Po$, where general here means that $s$ is neither on $C$, nor the tangent line to the cusp, nor the tangent line to the inflection point on $C$. For example we can choose ${y^2z-x^3}$ as the defining polynomial of $C$, and take ${s=(0:1:1)}$. Then the polar curve of $C$ with respect to the point $s$, given by the defining polynomial ${2yz+y^2}$, intersects $C$ in three smooth points, ${(2^{\frac{2}{3}}:-2:1)}$, ${(2^{-\frac{1}{3}}(-1+\sqrt{3}i):-2:1)}$ and ${(2^{-\frac{1}{3}}(-1-\sqrt{3}i):-2:1)}$. Blowing up at $s$ brings us to $\Fen$ and a curve of type $(0,3)$ with one ordinary cusp, say $p_4$. We additionally have three fibers $L_j$, $j=1,\ldots,3$, with the property that ${L_j \cdot C=2 p_j+r_j}$ for smooth points $p_j$ and $r_j$ on $C$. Blowing up at the $r_j$'s and contracting the $L_j$'s, we get the desired series of curves. 
\end{proof}

The series in Theorem \ref{2erne} can be extended to a series of rational cuspidal curves with less than four cusps in an obvious way. We state this as a corollary.
\begin{cor}
For $h\in\{0,1\}$, all integers $k \geq 0$, and every choice of $n_j \in \mathbb{N} \cup \{0\}$, with ${j=1,\ldots,4}$, such that ${\sum_{j=1}^4 n_j=2k+h}$, there exists on the Hirzebruch surface $\Fh$ a rational cuspidal curve of type $(k+1-h,3)$ with $s \in \{0,1,2,3,4\}$ cusps and cuspidal configuration $$[2_{n_1}],[2_{n_2}],[2_{n_3}],[2_{n_4}].$$
\end{cor}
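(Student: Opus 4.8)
The plan is to reduce the corollary to the construction already carried out in Theorem \ref{2erne}. Recall that $[2_{0}]$ denotes an ordinary smooth point rather than a cusp, so the number of cusps $s$ of the curve in the statement is the number of indices $j$ with $n_{j}\geq 1$. If $s=4$ then $\sum_{j}n_{j}\geq 4$, which forces $2k+h\geq 4$ and hence $k\geq 2$; this case is exactly Theorem \ref{2erne}, and no instance with $s=4$ and $k\leq 1$ occurs. Thus the only new content is $s\leq 3$, and for these values I would run the construction of Theorem \ref{2erne} verbatim, the sole change being that one, two, or three of the four ``cusp slots'' are left empty throughout.

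The engine would be the birational move from the inductive step of Theorem \ref{2erne}: on a curve $C$ of type $(a,3)$ on $\PP$ or $\Fen$ carrying a cusp $p_{j}$ with multiplicity sequence $[2_{n_{j}}]$ together with a fiber $L_{j}$ satisfying $L_{j}\ldot C=2p_{j}+r_{j}$ for a smooth, non-fiber-tangential point $r_{j}$, one blows up $r_{j}$ and contracts $L_{j}$. This elementary transformation interchanges $\PP$ and $\Fen$, raises $[2_{n_{j}}]$ to $[2_{n_{j}+1}]$, adjusts the type so that the relation $\sum n_{i}=2k+h$ (with the parity of $\sum n_{i}$ tracking $h$) keeps holding, leaves the other cusps untouched, and again exhibits a fiber of the required kind at $p_{j}$. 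Since this move never reads how many of the four slots are occupied, it applies unchanged for every $s\in\{0,1,2,3,4\}$; by performing it on whichever cusp one chooses, one increment at a time, one would build any prescribed tuple $(n_{1},\dots,n_{4})$ of nonnegative integers having the stated sum and the stated number of nonzero entries.

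It then remains to supply, for each $s\leq 3$, a base curve of minimal admissible $k$ from which the chain starts. For $s=0$ the unique instance is $k=h=0$, where one takes a smooth rational curve of type $(1,3)$ on $\PP$, which exists by Corollary \ref{genusfe}. For $s=1$ one starts from the cuspidal cubic in $\Po$ blown up at a general point, which, as in the alternative proof of Theorem \ref{2erne}, lands on $\Fen$ as a curve of type $(0,3)$ with one ordinary cusp carrying the needed fiber. For $s=3$ one starts from the tricuspidal quartic in $\Po$ blown up at a general point of the curve, obtaining on $\Fen$ a curve of type $(1,3)$ with cuspidal configuration $[2],[2],[2]$, each cusp again carrying the required fiber. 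For $s=2$ one would manufacture a base bicuspidal curve of type $(2,3)$ on $\PP$ with configuration $[2],[2]$ by applying to the cuspidal cubic the same kind of ``first step'' as in the proof of Theorem \ref{2erne} (mark a general smooth point, blow it up together with the residual intersection of its tangent, and contract that tangent); the parity of $\sum n_{i}$ then forces the ambient surface to be $\PP$. Because every increment toggles between $\PP$ and $\Fen$ while raising $\sum n_{i}$ by one, every target of the right parity and sum is reached from the corresponding base.

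The main obstacle I anticipate is exactly the subtlety already handled in Theorem \ref{2erne}: ensuring that every blow-up and contraction in these chains keeps us inside $\PP\cup\Fen$, i.e.\ that the point blown up is never fiber-tangential and that no two curves contracted in the same ruling are collinear, so that a contraction produces $\PP$ or $\Fen$ and never some $\Fe$ with $e\geq 2$. For $s\leq 3$ this should be no harder than in the four-cusp case, since fewer cusps means fewer fibers that can collide. The remaining verifications — that each base curve has the asserted genus and type, that the fibers $L_{j}$ are present, and that the relevant points lie in general position — are finite and routine, and would be confirmed exactly as their analogues in Theorem \ref{2erne}, with \verb+Maple+ where convenient.
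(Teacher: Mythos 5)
Your overall strategy is viable, and it is genuinely different from the paper's: the paper obtains the curves with $s<4$ cusps top-down, by performing the elementary transformations of Theorem \ref{2erne} centered at \emph{cusps} of the fourcuspidal series (each such move lowers one $n_j$ by one, eventually smoothing that slot), whereas you build bottom-up from base curves with $s\leq 3$ ordinary cusps and then deepen them by the same increment move. The reduction of the $s=4$ case to Theorem \ref{2erne}, the $s=0,1,3$ bases, and the bookkeeping of parity, type and surface under the increment move are all fine (modulo the same inductive assertions about the fibers $L_j$ that the paper itself makes).

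However, your $s=2$ base step fails as written. For the cuspidal cubic, the tangent $T$ at a general smooth point $p$ satisfies $T\cdot C=2p+t_1$ with a \emph{single} residual point, and your recipe is to blow up $p$ together with $t_1$ and contract $T$. Blowing up the tangency point $p$ itself separates $C$ from $T$: after blowing up $p$ and $t_1$ one has $T'^2=-1$ but $T'\cdot C'=1$, so contracting $T'$ creates no new cusp, and the image is a curve of bidegree $(2,2)$ on $\PP$ with only the original cusp -- not the bicuspidal $(2,3)$-curve you need. The cusp-creating mechanism of the ``first step'' of Theorem \ref{2erne} requires the contracted $(-1)$-curve to remain \emph{tangent} to the strict transform, so the tangency point must not be among the centers. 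The fix is easy: blow up $t_1$ and a point of $T$ not on $C$ (then $T'\cdot C'=2p'$, and contracting $T'$ produces the second ordinary cusp on a $(2,3)$-curve on $\PP$); equivalently, apply one increment move to your $s=1$ base at one of its smooth fiber-tangential points (these exist by the polar argument in the alternative proof of Theorem \ref{2erne}), or, as in the paper, blow up a cusp of the $s=3$ base (or of a curve from Theorem \ref{2erne}) and contract the fiber through it. With that correction your argument goes through; the appeal to Corollary \ref{genusfe} for the existence of the smooth $(1,3)$-curve is also not quite an existence statement, but that case is trivial (the graph of a degree-three morphism $\Pot\to\Pot$).
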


\begin{proof}
These curves can be constructed from the curves in Theorem \ref{2erne} by a similar construction. In order to construct the curves with less than four cusps we have to blow up cusps on the curves in the series from Theorem \ref{2erne}.
\end{proof}

Last in this section we provide an example of a curve not represented in any of the above series. This is the only example we have found of such a curve, and in particular the only such curve on $\mathbb{F}_2$.
\begin{thm}
On $\mathbb{F}_2$ there exists a rational cuspidal curve of type $(0,3)$ with four cusps and cuspidal configuration 
$$[2],[2],[2],[2].$$
\end{thm}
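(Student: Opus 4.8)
The plan is to construct the curve on $\mathbb{F}_2$ directly from one of the four-cuspidal curves already produced on $\mathbb{F}_1$, mimicking the elementary-transformation technique used throughout Section \ref{4C}. Concretely, I would start from the curve $C_3$ of type $(2,3)$ on $\mathbb{F}_1$ with cuspidal configuration $[2_2],[2],[2],[2]$ obtained by blowing up the $[2_3]$-cusp of the quintic $C_8$ (Table \ref{tab:prccw4}). The goal is to perform one elementary transformation $\mathbb{F}_1 \dashrightarrow \mathbb{F}_2$ — blow up a point on the special section $E_1$ lying on a suitable fiber $T$, then contract the strict transform of $T$ — chosen so that the strict transform of the curve loses the two extra infinitely near points at one cusp but gains nothing at the others, ending in type $(0,3)$ on $\mathbb{F}_2$ with four ordinary cusps.

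First I would record the numerics. An elementary transformation centered at a point $x$ on a fiber $T$ sends a curve meeting $T$ transversally everywhere except with a tangency/contact pattern at $x$ into its strict transform; the type changes predictably and the contribution of $x$ to the multiplicity sequence of whatever branch passes through it drops by the amount absorbed. Starting from type $(2,3)$ on $\mathbb{F}_1$, to land in type $(0,3)$ on $\mathbb{F}_2$ I need the fiber $T$ to satisfy $(T\cdot C)_{x}$ large enough at the center $x$ to kill the $[2_2]\to[2]$ reduction while the transformation itself (going from $e=1$ to $e=2$) accounts for the drop in the $L$-degree from $2$ to $0$. So I would take $T$ to be the fiber through the $[2_2]$-cusp, verify that $T$ passes through $E_1$ at a point $s$, blow up at $s$, and contract $T$: the curve is now on $\mathbb{F}_2$, its type is computed by intersecting with the new fiber and section, and the cusp that was fiber-tangential has its multiplicity sequence truncated to $[2]$, leaving cuspidal configuration $[2],[2],[2],[2]$.

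Alternatively — and this is perhaps cleaner — I would imitate the \emph{alternative proof of Theorem \ref{2erne}}: start from a plane curve of low degree with the right polar/tangency geometry and blow up a single point to reach $\mathbb{F}_2$ directly. The natural candidate is the quintic $C_8$ with configuration $[2_3],[2],[2],[2]$: its $[2_3]$-cusp has a tangent line $T$ with $T\cdot C = 4p + r$, i.e.\ $(T\cdot C)_p = 4$. Blowing up \emph{twice} infinitely near $p$ along $T$ (first at $p$, then at the next infinitely near point on the strict transform of $T$) and then contracting the successive strict transforms of $T$ realizes the composition $\Po \dashrightarrow \mathbb{F}_1 \dashrightarrow \mathbb{F}_2$; each step truncates the multiplicity sequence $[2_3]$ by one entry while the curve-type is read off from the intersection numbers $L^2=0$, $L\cdot M=1$, $M^2=2$ on $\mathbb{F}_2$. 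After the two steps the first cusp has configuration $[2]$ and the curve has type $(0,3)$.

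The main obstacle is the bookkeeping of \emph{where} the strict transform of $T$ meets the special section at each stage, i.e.\ verifying that the center of each blow-up genuinely lies on $E$ (so that the contraction produces $\mathbb{F}_2$ rather than $\mathbb{F}_0$ or leaves us on $\mathbb{F}_1$), and that the three ordinary cusps $[2]$ are untouched — in particular that none of them becomes fiber-tangential and acquires an extra infinitely near point. This is exactly the kind of position-of-infinitely-near-points analysis the paper handles via explicit \texttt{Maple} computation in the appendix, so I would either carry out the analogous explicit parametrization (taking, say, the parametrization of $C_8$ from Table \ref{tab:degree5}) and check the intersection of the relevant fiber with the exceptional curves directly, or invoke the general dimension count that a generic choice of the surviving free point avoids all the bad loci. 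Once the incidence data is confirmed, the genus (Corollary \ref{genusfe}) and Theorem \ref{thm:multb} provide sanity checks: with $(a,b)=(0,3)$ on $\mathbb{F}_2$ one computes $\tfrac{(b-1)(2a-2+be)}{2} = \tfrac{2\cdot 4}{2}=4 = 4\cdot\tfrac{2\cdot 1}{2}$, so $g=0$, and $m=2\le b=3$, consistent with four ordinary cusps.
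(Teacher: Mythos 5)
Your second (``cleaner'') route is essentially the paper's own proof: the paper blows up the $[2_3]$-cusp $p$ of the quintic to get the $(2,3)$-curve on $\mathbb{F}_1$, records the incidences $E'\cdot C'=2p'$ and $T'\cdot C'=2p'+s'$, then blows up the cusp $p'$ (which lies on the special section) and contracts $T'$, landing on $\mathbb{F}_2$ with a $(0,3)$-curve and configuration $[2],[2],[2],[2]$. Two clarifications on your write-up. First, the ``main obstacle'' you defer to a \verb+Maple+ or genericity argument is in fact automatic and forced, not generic: the centre of the second blow-up is the first infinitely near point of the cusp, and such a point always lies on the exceptional curve of the blow-up at the cusp; this is exactly what the paper encodes in $E'\cdot C'=2p'$, so the contraction of $T'$ necessarily produces $\mathbb{F}_2$ (and since $T\cdot C=4p+r$, the three $[2]$-cusps are away from all centres and from $T$, so they are untouched). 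Second, your first variant is misstated: blowing up at a point $s\in E_1\cap T$ and contracting $T$ can only truncate the $[2_2]$-cusp because $s$ coincides with the cusp itself --- a contraction, or a blow-up away from the curve, never shortens a multiplicity sequence --- and that cusp is not fiber-tangential, since $(T'\cdot C')_{p'}=2$ equals its multiplicity. Once $s$ is identified with $p'$, variant one collapses into variant two, i.e.\ into the paper's argument; your genus and multiplicity sanity checks are correct.
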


\begin{proof}
Let $C$ be the plane rational fourcuspidal curve of degree $5$. Let $p$ be the cusp $[2_3]$ and $p_i$, $i=1,2,3$, the cusps with multiplicity sequence $[2]$. Let $T$ be the tangent line to $C$ at $p$. Let $L_i$ denote the line through $p$ and $p_i$, with $i=1,2,3$. There are smooth points $s$ and $r_i$, $i=1,2,3$, on $C$, such that 
$$T \cdot C=4 p + s, \qquad L_i \cdot C=2 p+2 p_i + r_i.$$

Blowing up at $p$ gives a $(2,3)$-curve on $\Fen$ with cuspidal configuration ${[2_2],[2],[2],[2]}$. Let $C'$ denote the strict transform of $C$, $T'$ and $L_i'$ the strict transforms of $T$ and $L_i$, and let $E'$ be the special section on $\Fen$. Let $p'$  be the cusp $[2_2]$, $p_i'$ the other cusps, and $s'$ and $r_i'$ the strict transforms of the points $s$ and $r_i$. Then we have the following intersections, 
$$E' \cdot C'=2 p', \qquad T'\cdot C'=2 p'+s', \qquad L_i'\cdot C'=2 p_i' + r_i'.$$ Since $p' \in E'$, blowing up at $p'$ and contracting $T'$, we get a cuspidal curve on $\mathbb{F}_2$ of type $(0,3)$ and cuspidal configuration ${[2],[2],[2],[2]}$.

\end{proof}

\section{Associated results}\label{AR}
The main result in this section is that the rigidity conjecture proposed by Flenner and Zaidenberg for plane rational cuspidal curves can \emph{not} be extended to the case of rational cuspidal curves on Hirzebruch surfaces. First in this section we state and prove two lemmas for rational cuspidal curves on Hirzebruch surfaces, the first analogous to a lemma by Flenner and Zaidenberg \cite[Lemma 1.3, p.148]{FlZa94}, and the other a lemma bounding the sum of the so-called $M$-numbers of the curve. Second, we use these lemmas to give an explicit formula for $\chi(\T)$ in this case. We calculate this value for the curves constructed in Section \ref{4C}, and with that we provide examples of curves for which $\chi(\T) \neq 0$. Third, we use the two mentioned lemmas and other results to find a lower bound on the highest multiplicity of a cusp on a rational cuspidal curve on a Hirzebruch surface. Last in this section, we investigate real cuspidal curves on Hirzebruch surfaces.

\subsection{Two lemmas}
We now state and prove two lemmas for rational cuspidal curves on Hirzebruch surfaces. First, we prove a lemma that is a variant of \cite[Lemma 1.3, p.148]{FlZa94}.

\begin{lem}\label{lemflzadef}\label{lemflzadeffe}
Let $C$ be a rational cuspidal curve on $\Fe$. Let $(V,D)$ be the minimal embedded resolution of $C$, and let $K_V$ be the canonical divisor on $V$. Moreover, let $D_1, \ldots, D_r$ be the irreducible components of $D$, $\Theta_V$ the tangent sheaf of $V$, $\mathscr{N}_{D/V}$ the normal sheaf of $D$ in $V$, and let $c_2$ be the second Chern class of $V$. Then the following hold.
\begin{enumerate}
\item[$(0)$]
$D$ is a rational tree.
\item[$(1)$]
$\chi(\Theta_V)=8-2r$.
\item[$(2)$]
$K_V^2=9-r$.
\item[$(3)$]
$c_2:=c_2(V)=3+r$.
\item[$(4)$]
$\displaystyle \chi \left(\bigoplus \mathscr{N}_{D_i/V}\right)=r+\sum_{i=1}^rD_i^2$.
\item[$(5)$]
$\displaystyle \chi(\T)=K_V \ldot (K_V+D)-1.$
\end{enumerate}
\end{lem}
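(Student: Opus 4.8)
The plan is to prove all six items together as a chain, since they are tightly linked, and then focus my energy on item $(5)$, which is the one the later sections actually use. First I would establish $(0)$: the minimal embedded resolution of a rational cuspidal curve is obtained by blowing up points lying over the cusps, and each cusp is unibranch, so the exceptional configuration over a single cusp together with the (rational, smooth) strict transform of $C$ forms a tree of rational curves; since the cusps lie at distinct points of $C$, gluing these along the strict transform still yields a connected tree with no cycles, and rationality of $C$ ($g=0$) ensures $D$ has no loops and every component is rational. This gives $p_a(D)=0$ and $h^1(\Os_D)=0$.

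Next, $(1)$--$(3)$ are bookkeeping on the blow-ups. Starting from $\Fe$, where by the excerpt $K^2=8$ and $c_2(\Fe)=\chi_{\mathrm{top}}(\Fe)=4$ (noether: $\chi(\Os)=\frac1{12}(K^2+c_2)=1$), each monoidal transformation drops $K^2$ by $1$ and raises $c_2$ by $1$. If the resolution uses $N$ blow-ups then $K_V^2=8-N$ and $c_2(V)=4+N$. The number of components of $D$ is $r=N+1$: the strict transform of $C$ plus one exceptional curve per blow-up (all of them appear in $D$ since $D$ is the reduced total transform). Hence $K_V^2=9-r$, proving $(2)$, and $c_2(V)=3+r$, proving $(3)$; then $(1)$ follows from $\chi(\Theta_V)=2K_V^2-c_2(V) = 2(9-r)-(3+r)=15-3r$... which does not match, so instead I would use the correct Riemann--Roch / Hirzebruch computation $\chi(\Theta_V)=-K_V^2-10\chi(\Os_V)+2c_2(V)$ on a rational surface, giving $\chi(\Theta_V)=-(9-r)-10+2(3+r)=8-2r$ as claimed (alternatively cite \cite[Lemma 1.3]{FlZa94} for the surface statement and adapt). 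Item $(4)$ is a direct Riemann--Roch on the curve $D$: $\chi(\mathscr{N}_{D_i/V})=\deg \mathscr{N}_{D_i/V}+1-g(D_i)=D_i^2+1$ since each $D_i\cong\Pot$, and summing over the $r$ components gives $r+\sum D_i^2$.

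For $(5)$, the key is the exact sequence relating the logarithmic tangent sheaf to the ordinary tangent sheaf and the normal bundles of the boundary, $0\to\T\to\Theta_V\to\bigoplus_i\mathscr{N}_{D_i/V}\to 0$ (valid because $D$ is SNC; this is the standard residue sequence, used in \cite{FlZa94}). Taking Euler characteristics, $\chi(\T)=\chi(\Theta_V)-\chi(\bigoplus\mathscr{N}_{D_i/V})=(8-2r)-(r+\sum D_i^2)=8-3r-\sum D_i^2$. To finish I would rewrite the right-hand side using $K_V^2=9-r$ and the adjunction relation on the tree: since each $D_i$ is a smooth rational curve, $K_V\ldot D_i=-D_i^2-2$, so $K_V\ldot D=-\sum D_i^2-2r$, and because $D$ is a tree of $r$ rational curves one has $D^2 = \sum D_i^2 + 2(r-1)$ (each of the $r-1$ edges contributes $2$ to the self-intersection of the sum). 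Then
\begin{align*}
K_V\ldot(K_V+D)-1 &= K_V^2 + K_V\ldot D - 1\\
&= (9-r) + \bigl(-\textstyle\sum D_i^2 - 2r\bigr) - 1\\
&= 8 - 3r - \textstyle\sum D_i^2,
\end{align*}
which matches the expression for $\chi(\T)$ above, completing $(5)$.

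The main obstacle is getting the two cohomological formulas — $\chi(\Theta_V)=8-2r$ in $(1)$ and the exactness of the residue sequence underlying $(5)$ — on a rigorous footing rather than by analogy: one must check $h^0(\Os_D)=1$ and $h^1(\Os_D)=0$ (which is exactly $(0)$), confirm that the normal-bundle sequence is exact for the SNC divisor $D$ (not merely for a smooth divisor), and keep careful track of the fact that $V\to\Fe$ is a composition of point blow-ups so that the Chern-number identities are the naive ones. Once $(0)$ is in hand the rest is essentially the same argument as in \cite[Lemma 1.3]{FlZa94} for $\Po$, with $K^2=8$ replaced appropriately, and $(5)$ is pure intersection-theory bookkeeping on a tree, so I would expect it to go through cleanly after the genus-zero input is secured.
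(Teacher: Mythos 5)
Your route is essentially the paper's: (2)--(3) by tracking how $K^2$ and $c_2$ change under the $r-1$ point blow-ups from $\Fe$ together with Noether's formula ($c_2(\Fe)=4$), (4) by Riemann--Roch on each rational component, (5) from the residue sequence $0\to\T\to\Theta_V\to\bigoplus\mathscr{N}_{D_i/V}\to0$ of \cite{FlZa94} combined with adjunction and the tree combinatorics, and (0) as you describe. The only stylistic difference is in (5): you sum the component adjunctions $K_V\ldot D_i=-D_i^2-2$ and use the edge count $D^2=\sum D_i^2+2(r-1)$, whereas the paper applies adjunction to the divisor $D$ itself via $p_a(D)=0$; the two are equivalent, and your final identity $K_V\ldot(K_V+D)-1=8-3r-\sum D_i^2$ is correct.

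The genuine error is in item (1). The formula you settle on, $\chi(\Theta_V)=-K_V^2-10\chi(\Os_V)+2c_2(V)$, is not a Riemann--Roch identity, and the evaluation is also wrong: with $K_V^2=9-r$ and $c_2=3+r$ it gives $-(9-r)-10+2(3+r)=3r-13$, not $8-2r$. The correct computation (the one the paper makes) is Hirzebruch--Riemann--Roch for the rank-two sheaf $\Theta_V$, with $c_1(\Theta_V)=-K_V$ and $c_2(\Theta_V)=c_2(V)$: $\chi(\Theta_V)=\tfrac{1}{2}(-K_V)\ldot(-2K_V)-c_2+2\chi(\Os_V)=K_V^2-c_2+2$, equivalently $2K_V^2-10\chi(\Os_V)$ after Noether, which yields $(9-r)-(3+r)+2=8-2r$. (Your first guess $2K_V^2-c_2$ was rightly discarded, but note that $2K_V^2-10\chi(\Os_V)$ is the identity you were reaching for.) Your fallback of citing \cite[Lemma 1.3]{FlZa94} ``and adapting'' does not bypass this step either: the analogous computation over $\Po$ gives $10-2r$, since $K_{\Po}^2=9$ rather than $8$, so the $\Fe$ case must be recomputed exactly as above. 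Once (1) is repaired this way, the rest of your argument, including the derivation of (5) from $(8-2r)-(r+\sum D_i^2)$, goes through as in the paper.
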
 

\begin{proof} Note that the proof is very similar to the proof of \cite[Lemma 1.3, p.148]{FlZa94}, only small details are changed. 

\begin{enumerate}
\item[$(0)$]
Since $(V,D)$ is the minimal embedded resolution of $C$, $D$ is an SNC-divisor. Since $C$ is a rational curve, $\tilde{C}$ is smooth, and all exceptional divisors are smooth and rational, then all components of $D$ are smooth and rational. The dual graph of $D$, say $\Gamma$, is necessarily a connected graph, and since $C$ is cuspidal, $\Gamma$ will not contain cycles. Thus, $D$ is by definition a rational tree.

\item[$(3)$]
Since $V$ is obtained by $r-1$ blowing ups, we have that the Chern class $c_2:=c_2(V)$ is $$c_2(V)=c_2(\Fe)+r-1.$$ Moreover, by \cite[Corollary V 2.5, p.371]{Hart:1977} we have that $\chi(\mathscr{O}_{\Fe})=1$. With $K$ the canonical divisor on $\Fe$, we apply the formula in \cite[Remark V 1.6.1, p.363]{Hart:1977}, 
\begin{align*}
12\chi(\mathscr{O}_{\Fe})&=K^2+c_2(\Fe)\\
12 &=8 + c_2(\Fe).\\
\end{align*}
We get $c_2(\Fe)=4$, hence,
\begin{align*}
c_2(V)&=4+r-1\\
&=3+r.
\end{align*}

\item[$(2)$]
We have by \cite[Proposition V 3.4, p.387]{Hart:1977} that $\chi(\mathscr{O}_V)=\chi(\mathscr{O}_{\Fe})=1$. By the formula in \cite[Remark V 1.6.1, p.363]{Hart:1977} again, we get 
\begin{align*}
K_V^2&=12\chi(\mathscr{O}_V)-c_2\\
&=12-(3+r)\\
&=9-r.
\end{align*}

\item[$(4)$]
Since $D_i$ is a rational curve on the surface $V$ for all $i$, we have that $g(D_i)=0$. By \cite[Proof of Proposition II 8.20, p.182]{Hart:1977} we have that $$\mathscr{N}_{D_i/V}\cong \mathscr{L}(D_i) \otimes \Os_{D_i}.$$
Hence, by the Riemann--Roch theorem for curves \cite[p.362]{Hart:1977}, 
\begin{align*}
\displaystyle \chi \left(\bigoplus \mathscr{N}_{D_i/V}\right)&= \chi \left(\bigoplus \mathscr{L}(D_i) \otimes \Os_{D_i}\right)\\
&=\sum_{i=1}^r\left(D_i^2+1\right)\\
&=r+\sum_{i=1}^rD_i^2.
\end{align*}

\item[$(1)$]
By the Hirzebruch--Riemann--Roch theorem for surfaces \cite[Theorem A 4.1, p.432]{Hart:1977}, we have that for any locally free sheaf $\mathscr{E}$ on $V$ of rank $s$ with Chern classes $c_i(\mathscr{E})$,
$$\chi(\mathscr{E})=\frac{1}{2}c_1(\mathscr{E}) \ldot \bigl(c_1(\mathscr{E})+c_1(\Theta_V)\bigr)-c_2(\mathscr{E})+s\cdot \chi\left(\Os_V\right).$$

Moreover, by \cite[Example A 4.1.2, p.433]{Hart:1977}, $\Theta_V$ has rank $s=2$ and ${c_1(\Theta_V)=-K_V}$. We have by the previous results,
\begin{align*}
\chi(\Theta_V)&=\frac{1}{2}(-K_V) \ldot (-2K_V)-c_2(\Theta_V)+2\chi\left(\Os_V\right)\\
&=K_V^2-c_2+2\\
&=9-r-(3+r)+2\\
&=8-2r.
\end{align*}

\item[$(5)$]
Observe first that since $D$ is an SNC-divisor, we have by direct calculation
\begin{align*}\displaystyle D^2&=\sum_{i=1}^rD_i^2 + \sum_{i\neq j}D_iD_j\\
&=\sum_{i=1}^rD_i^2+(1+2(r-2)+1)\\
&=\sum_{i=1}^rD_i^2+2r-2.
\end{align*}

Since $D$ is an effective divisor, we have by definition that ${p_a(D)=1-\chi(\Os_D)}$. Since $D$ is a rational tree, by \cite[Lemma 1.2, p.148]{FlZa94}, $p_a(D)=0$. So by the adjunction formula \cite[Exericise V 1.3, p.366]{Hart:1977}, $$K_V \ldot D=-D^2-2.$$
%Additionally, we have that  since $\Gamma$ is a rational tree,$$\displaystyle 1=e(\Gamma)=r-\frac{1}{2}\sum_{i \neq j} D_iD_j.$$
%Thus, $$\sum_{i=1}^rD_i^2 =D^2-2r+2$$

Using the additivity of $\chi$ on the short exact sequence (see \cite[pp.147,162]{FlZa94}),
$$0 \longrightarrow \T \longrightarrow \Theta_V \longrightarrow \bigoplus \mathscr{N}_{D_i/V} \longrightarrow 0,$$ 
and the above results and remarks, we get

\begin{align*}
\chi(\T)&=\chi(\Theta_V)-\chi \left(\bigoplus \mathscr{N}_{D_i/V}\right)\\
&=(8-2r)-\Bigl(r+\sum_{i=1}^rD_i^2 \Bigr)\\
&=8-2r-(r+D^2-2r+2)\\
&=6-r-D^2\\
&=K_V^2-D^2-3\\
&=K_V^2+2K_V\ldot D-2(-D^2-2)-D^2-3\\
&=(K_V+D)^2+1\\
&=K_V \ldot (K_V+D)-1.
\end{align*}

\end{enumerate}
\end{proof}

The second lemma bounds the sum of the so-called $M$-numbers by the type of the curve, and this work is inspired by Orevkov (see \cite{Ore}). 

For a cusp $p$, the associated \emph{$M$-number} can be defined as
\begin{equation*}
{M} := \eta+\omega-1,
\end{equation*}
where
\begin{equation*}
\eta := \sum_{i=0}^{t-1}\left(m_i-1\right),
\end{equation*}
and $\omega$ is the number of blowing ups in the minimal embedded resolution which center is an intersection point of the strict transforms of two exceptional curves of the resolution, i.e., an \emph{inner} blowing up.

Moreover, the $M$-number can be expressed in terms of the multiplicity sequence $\ol{m}$, 
\begin{equation*}
{M} = \sum_{i=0}^{t-1}\left(m_i-1\right)+\sum_{i=1}^{t-1}\left(\left\lceil\frac{m_{i-1}}{m_i}\right\rceil-1\right)-1,
\end{equation*} 
where $\lceil a \rceil$ is the smallest integer $\geq a$. See \cite[Definition 1.5.23, p.44]{Fent} and \cite[p.659]{Ore} for more details.

Before we state and prove the lemma, recall that $\lkkf$ denotes the logarithmic Kodaira dimension of the complement to $C$ in $\Fe$ (see \cite{Iitaka}).

\begin{lem}\label{kod0M}
For a rational cuspidal curve $C$ of type $(a,b)$ on $\Fe$ with $s$ cusps and $\lkkf\geq 0$, we have
$$\sum_{j=1}^s M_j \leq 2(a+b)+be.$$
\end{lem}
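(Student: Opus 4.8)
The plan is to follow Orevkov's strategy for plane curves (see \cite{Ore}) and adapt it to the Hirzebruch surface setting, using the computation of $\chi(\T)$ from Lemma \ref{lemflzadeffe}$(5)$ as the main engine. First I would invoke the hypothesis $\lkkf \geq 0$: by the logarithmic Kodaira dimension dichotomy, since the log canonical divisor $K_V + D$ on the minimal embedded resolution $(V,D)$ has nonnegative Kodaira dimension, some positive multiple $n(K_V+D)$ is effective, and hence $(K_V+D) \ldot (K_V+D)$ together with the intersection-theoretic positivity of $K_V+D$ against effective curves can be exploited. Concretely, I expect to use $(K_V+D)^2 \geq 0$, or the sharper statement that $K_V+D$ is nef (after possibly passing to a further model, or using that $D$ is already SNC and the resolution is minimal), which by Lemma \ref{lemflzadeffe}$(5)$ gives $\chi(\T) = (K_V+D)^2 + 1 \geq 1$, equivalently $K_V \ldot (K_V + D) \geq 2$.

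Next I would translate this inequality into numerical data on $\Fe$ itself. Writing $C \sim a\LH + b\RH$ and $K \sim (e-2)\LH - 2\RH$, and tracking how each of the $r-1$ blowing ups in the embedded resolution changes $K_V^2$, $K_V \ldot D$ and $D^2$, one relates $(K_V+D)^2$ to the self-intersection $(K+C)^2$ on $\Fe$ and a correction term built from the multiplicities $m_i$ of the infinitely near points at each cusp. This correction term is exactly where the $\eta$-part of the $M$-numbers appears: each blowing up at a point of multiplicity $m$ contributes $-(m-1)^2$ or $-1$ to the relevant intersection numbers depending on whether it is an inner blowing up, and collecting these over all cusps produces $\sum_j M_j$ on one side. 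The bound $2(a+b) + be$ on the other side should come out of $(K+C)\ldot(-K)$ or a similar intersection number on $\Fe$: with $g = 0$ the adjunction/genus formula (Corollary \ref{genusfe}) pins down $(b-1)(2a-2+be)/2 = \sum_{j,i} m_i(m_i-1)/2$, and combining this with the arithmetic of the $\omega$-terms (the ceiling corrections $\lceil m_{i-1}/m_i\rceil - 1$) gives the linear expression in $a$, $b$, $e$.

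The key identity I would aim to establish, purely formally from the resolution data, is something like
\begin{equation*}
\chi(\T) \;=\; (K+C)\ldot(K+C) + 2 + \sum_{j=1}^s\Bigl( \text{something}\Bigr) \;-\; \sum_{j=1}^s M_j \,\cdot(\text{nothing extra}),
\end{equation*}
so that rearranging $\chi(\T) \geq 1$ yields $\sum_j M_j \leq (K+C)\ldot(K+C) + \text{const}$, and then I would simplify $(K+C)\ldot(K+C)$ on $\Fe$ using $\LH^2 = 0$, $\LH\ldot\RH = 1$, $\RH^2 = e$: one computes $(K+C) \sim (a+e-2)\LH + (b-2)\RH$, whence $(K+C)^2 = 2(a+e-2)(b-2) + (b-2)^2 e$, and the genus-zero condition should collapse the singularity sum against this to leave precisely $2(a+b)+be$. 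The main obstacle I anticipate is the bookkeeping in the second paragraph: correctly accounting for which blowing ups are inner versus outer at each cusp (so that the $\omega$ contribution to $M_j$ is matched), and verifying that the inequality $(K_V+D)^2 \geq 0$ genuinely follows from $\lkkf \geq 0$ on the \emph{minimal} embedded resolution rather than on some minimal model of the log pair — this may require a short separate argument (or citation) that minimality of the embedded resolution plus rationality of $D$ prevents $(-1)$-curves from spoiling nefness of $K_V+D$, or alternatively working with $\overline{P}^2$ of the log canonical and using that $\overline{\kappa} \geq 0 \Rightarrow \overline{P}^2 \geq 0$ for the relevant Zariski decomposition.
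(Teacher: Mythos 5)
There is a genuine gap, and it is the key step: the inequality you extract from $\lkkf \geq 0$ points in the wrong direction. By Lemma \ref{lemflzadeffe}$(5)$ together with \cite[Proposition 2.4]{FlZa94} (this is exactly Theorem \ref{chithetafe}), one has $\chi(\T) = 7 - 2(a+b) - be + \sum_j M_j$, so the desired bound $\sum_j M_j \leq 2(a+b)+be$ is equivalent to $\chi(\T) \leq 7$, i.e.\ to the \emph{upper} bound $(K_V+D)^2 \leq 6$. Your plan uses $\lkkf \geq 0$ to argue that $K_V+D$ is effective/nef and hence $(K_V+D)^2 \geq 0$, giving $\chi(\T)\geq 1$; rearranged correctly this yields $\sum_j M_j \geq 2(a+b)+be-6$, a \emph{lower} bound on $\sum_j M_j$, not the claimed upper bound — the sentence ``rearranging $\chi(\T)\geq 1$ yields $\sum_j M_j \leq \dots$'' reverses the inequality. (Separately, $\lkkf\geq 0$ does not in general give $(K_V+D)^2\geq 0$ on the minimal embedded resolution: only the positive part $P$ of the Zariski decomposition of $K_V+D$ satisfies $P^2\geq 0$, and $(K_V+D)^2$ can be strictly smaller; but even granting it, the conclusion goes the wrong way.)

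The hypothesis $\lkkf \geq 0$ is needed for a different input, namely the logarithmic Bogomolov--Miyaoka--Yau inequality in Orevkov's form \cite[Theorem 2.1]{Ore}: it bounds $(K_V+D)^2$ above by three times the topological Euler characteristic of $\Fe\setminus C$, which equals $2$ for a rational cuspidal curve (so $(K_V+D)^2\leq 6$). The paper then combines the identity $6-r-D^2=(K_V+D)^2+1$ from the proof of Lemma \ref{lemflzadeffe} with an explicit computation of $D^2$ using the genus formula (with $g=0$) and Matsuoka--Sakai's relation $\omega_j=-E_{p_j}^2-1$ \cite{MatsuokaSakai}; this is essentially the bookkeeping you outline in your second and third paragraphs, and that part of your proposal is sound. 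So the repair is to replace the nefness/effectivity argument by the log BMY inequality as the source of the upper bound, keeping the rest of your computation.
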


\begin{proof}
Let $(V,D)$ and ${\sigma=\sigma_1 \circ \ldots \circ \sigma_t}$ be the minimal embedded resolution of $C$. Write ${\sigma^*(C)=\tilde{C}+\sum_{i=1}^t m_{i-1}E_i}$, with $\tilde{C}$ the strict transform of $C$ under $\sigma$, $m_{i-1}$ the multiplicity of the center of $\sigma_i$ and $E_i$ the exceptional curve of $\sigma_i$. Then by induction and \cite[Proposition V 3.2, p.387]{Hart:1977} we find that
\begin{align*}
\tilde{C}^2 &=\Bigl(\sigma^*(C)-\sum_{i=1}^t m_{i-1}E_i\Bigr)^2 \notag\\
%&=(\sigma^*(C))^2-2\sigma^*(C).(\sum m_iE_i)+(\sum m_iE_i)^2 \notag\\
&=C^2-\sum_{i=0}^{t-1}m_i^2 \notag \\
&=b^2e+2ab-\sum_{i=0}^{t-1}m_i^2.
\end{align*} 

\noi By the genus formula, we may rewrite this,
\begin{align*}
\tilde{C}^2&=b^2e+2ab-\sum_{i=0}^{t-1}m_i^2\\
&=be+2(a+b)-2-\sum_{i=0}^{t-1} m_i.
\end{align*}

\noi Moreover, we have that for ${D=\tilde{C}+\sum_{i=1}^t E_i'}$, with $E_i'$ the strict transform of $E_i$ under the composition $\sigma_{i+1} \circ \cdots \circ \sigma_t$,
\begin{align*}
D^2&=\tilde{C}^2+2\tilde{C} \ldot \Bigl(\sum_{i=1}^t E_i'\Bigr)+\Bigl(\sum_{i=1}^t E_i'\Bigr)^2 \notag\\
&=\tilde{C}^2+2s+\Bigl(\sum_{i=1}^t E_i'\Bigr)^2.
\end{align*}

\noi Now we split the latter term in this sum into the sum of the strict transforms of the exceptional divisors for each cusp, $$\sum_{i=1}^t E_i'=\sum_{j=1}^s E_{p_j},$$ where $s$ denotes the number of cusps.
By \cite[Lemma 2, p.235]{MatsuokaSakai}, we have 
$$\omega_j=-E_{p_j}^2-1.$$ 

\noi Combining the above results, we get
\begin{align*}
D^2&=be+2(a+b)-2- \sum_{i=0}^{t-1}m_i+2s-\sum_{j=1}^s (\omega_j+1)\\
&=be+2(a+b)-2-\sum_{i=0}^{t-1} m_i-\sum_{j=1}^s (\omega_j-1).
\end{align*}

\noi By the proof of Lemma \ref{lemflzadef}, we have $$6-r-D^2=(K_V+D)^2+1.$$ Note that $r$ denotes the number of components of the divisor $D$. This number is equal to the total number of blowing ups needed to resolve the singularities, plus one component from the strict transform of the curve itself. Following the notation established, we have $r=t+1$. Moreover, by assumption, $\lkkf \geq 0$. By the logarithmic Bogomolov--Miyaoka--Yau-inequality (B--M--Y-inequality) in the form given by Orevkov \cite[Theorem 2.1, p.660]{Ore} and the topological Euler characteristic of the complement to the curve (see \cite{MOEONC}), we then have that $$(K_V+D)^2 \leq 6.$$

\noi So we get
\begin{align*}
0&\leq 1+r+D^2\\
& \leq 1+r+be+2(a+b)-2-\sum_{i=0}^{t-1} m_i-\sum_{j=1}^s (\omega_j-1)\\
& \leq -1+1+be+2(a+b)-\sum_{i=0}^{t-1}(m_i-1)-\sum_{j=1}^s (\omega_j-1)\\
&\leq be+2(a+b)-\sum_{j=1}^s M_j.
\end{align*}

\noi Hence,
$$\sum_{j=1}^s M_j \leq 2(a+b)+be.$$
\end{proof}

\subsection{An expression for $\chi\left(\T\right)$}
In this section we give a formula for $\chi(\T)$ for curves $C$ on $\Fe$. Complements to rational cuspidal curves with three or more cusps on Hirzebruch surfaces can be shown to have $\lkkf=2$ (see \cite{MOEONC}), however, these open surfaces are no longer $\Q$-acyclic, so we do not expect the rigidity conjecture of Flenner and Zaidenberg to hold in this case. Indeed, we calculate the value of $\chi(\T)$ for the curves provided in Section \ref{4C}, and observe that for these curves we do not necessarily have $\chi(\T)=0$.

\begin{thm}\label{chithetafe}
For an irreducible rational cuspidal curve $C$ of type $(a,b)$ on $\Fe$ with $s$ cusps $p_j$ with respective $M$-numbers $M_j$, we have $$\chi(\T)=7-2a-2b-be+\sum_{j=1}^s M_j.$$
\end{thm}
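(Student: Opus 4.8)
The plan is to combine the formula from part $(5)$ of Lemma~\ref{lemflzadef}, namely $\chi(\T)=K_V\ldot(K_V+D)-1$, with the explicit computation of $D^2$ carried out inside the proof of Lemma~\ref{kod0M}. First I would recall from the proof of Lemma~\ref{lemflzadef} the chain of equalities $\chi(\T)=6-r-D^2=(K_V+D)^2+1$, so that it suffices to evaluate $D^2$ and $r$ in terms of the invariants of $C$. Since $(V,D)$ is the minimal embedded resolution obtained by $t$ monoidal transformations and $D=\tilde C+\sum_{i=1}^t E_i'$, we have $r=t+1$ exactly as noted in the proof of Lemma~\ref{kod0M}.

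Next I would import the identity established there,
\begin{equation*}
D^2=be+2(a+b)-2-\sum_{i=0}^{t-1}m_i-\sum_{j=1}^s(\omega_j-1),
\end{equation*}
which was derived using $\sigma^*(C)=\tilde C+\sum m_{i-1}E_i$, the genus formula (Corollary~\ref{genusfe}) to replace $C^2-\sum m_i^2$ by $be+2(a+b)-2-\sum m_i$, the fact that $\tilde C$ meets the exceptional configuration in exactly $s$ points, and Matsuoka--Sakai's relation $\omega_j=-E_{p_j}^2-1$. Substituting this and $r=t+1$ into $\chi(\T)=6-r-D^2$ gives
\begin{equation*}
\chi(\T)=6-(t+1)-be-2(a+b)+2+\sum_{i=0}^{t-1}m_i+\sum_{j=1}^s(\omega_j-1).
\end{equation*}
The only remaining step is bookkeeping: since each blowing-up center has multiplicity $m_{i-1}\ge 1$ and there are $t$ of them, $\sum_{i=0}^{t-1}m_i-t=\sum_{i=0}^{t-1}(m_i-1)=\sum_{j=1}^s\eta_j$, so that $\sum_{i=0}^{t-1}m_i+\sum_{j=1}^s(\omega_j-1)-t=\sum_{j=1}^s(\eta_j+\omega_j-1)=\sum_{j=1}^s M_j$ by the definition of the $M$-number. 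Collecting constants, $6-1-be-2(a+b)+2=7-2a-2b-be$, which yields
\begin{equation*}
\chi(\T)=7-2a-2b-be+\sum_{j=1}^s M_j,
\end{equation*}
as claimed.

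I do not expect a genuine obstacle here: the theorem is essentially a repackaging of computations already present in the proofs of Lemmas~\ref{lemflzadef} and~\ref{kod0M}, and unlike Lemma~\ref{kod0M} it requires no inequality (no appeal to the B--M--Y inequality or to $\lkkf\ge 0$), since we are extracting an exact value of $D^2$ rather than bounding it. The one point that needs a little care is the indexing of multiplicities across several cusps — ensuring that $\sum_i m_i$ really decomposes as $\sum_j\sum_i m_{j,i}$ and that the count of $t$ monoidal transformations splits as $\sum_j t_j$ — but this is the same decomposition already used implicitly in Lemma~\ref{kod0M} when passing from $\sum_{i=1}^t E_i'$ to $\sum_{j=1}^s E_{p_j}$, so it carries over verbatim.
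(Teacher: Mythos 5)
Your argument is correct, and every identity you import — $\chi(\T)=6-r-D^2$ from the proof of Lemma~\ref{lemflzadef}, $r=t+1$, and the exact expression $D^2=be+2(a+b)-2-\sum_{i}m_i-\sum_j(\omega_j-1)$ from the proof of Lemma~\ref{kod0M} — is indeed established there by pure intersection theory, the genus formula and Matsuoka--Sakai's relation $\omega_j=-E_{p_j}^2-1$, independently of the hypothesis $\lkkf\geq 0$ and of the B--M--Y inequality, so your remark that no inequality is needed is well taken; the bookkeeping $\sum_i m_i-t=\sum_j\eta_j$ and $M_j=\eta_j+\omega_j-1$ then closes the computation. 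The route is genuinely different from the paper's: the paper instead quotes Flenner--Zaidenberg's Proposition~2.4, which says that the $M$-numbers measure exactly the change of $K\ldot(K+D)$ under the embedded resolution, $K_V\ldot(K_V+D)=K_{\Fe}\ldot(K_{\Fe}+C)+\sum_j M_j$, and then evaluates $K_{\Fe}\ldot(K_{\Fe}+C)=8-2a-2b-be$ directly in $\mathrm{Pic}(\Fe)$ using $K_{\Fe}\sim(e-2)\LH-2\RH$ and $C\sim a\LH+b\RH$, after which part $(5)$ of Lemma~\ref{lemflzadef} gives the statement in three lines. What the paper's approach buys is brevity and a clean separation between the global intersection computation on $\Fe$ and the local contribution of each cusp; what your approach buys is self-containedness — in effect you re-derive the Flenner--Zaidenberg relation in this situation from material already in the paper — at the cost of leaning on intermediate identities inside the proofs of the two lemmas rather than on their stated conclusions, which you should flag explicitly (or extract as a separate displayed identity) if you write this up.
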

\begin{proof}
By \cite[Proposition 2.4, p.445]{FlZa94},
$$K_V \ldot (K_V+D)=K_{\Fe} \ldot (K_{\Fe}+C)+\sum_{j=1}^s M_j.$$

\noi By Lemma \ref{lemflzadeffe}, we then get 
\begin{align*}
\chi(\T)&= K_V \ldot (K_V+D)-1\\
&= K_{\Fe} \ldot (K_{\Fe}+C)+\sum_{j=1}^s M_j-1\\
&= ((e-2)\LH-2\RH) \ldot \Bigl((a+e-2)\LH+(b-2)\RH\Bigr)-1 + \sum_{j=1}^s M_j\\
&= 7-2a-2b-be+\sum_{j=1}^s M_j.
\end{align*}
\end{proof}

%\begin{prp} \label{VDMP} (\cite[Proposition 2.4. p.445]{FlZa95}) Let $C$ be an irreducible cuspidal curve on a smooth compact complex surface $V_0$. Let $V \longmapsto V_0$ be the minimal embedded resolution of the cusps of $C$, and let $D$ be the reduced total inverse image of $C$ on $V$. $K_V$ and $K_{V_0}$ denote the canonical divisors on the respective surfaces. Let $p_j$ denote the singularities of $C$, and $\ol{m}_j$ the respective multiplicity sequences. Then $$K_V(K_V+D)=K_{V_0}(K_{V_0}+C)+\sum_{p \in \mathrm{Sing}C} M_p,$$ where $M_p$ is the Orevkov number of $p$.\end{prp}

%\subsection{More on $\chi(\T)$}
With the above result in mind, we investigate $\chi(\T)$ further. Let $C$ be a rational cuspidal curve of type $(a,b)$ on $\Fe$, and let $(V,D)$ be as before. By the above, we have that
\begin{align*}
\chi(\T)  &:=   \h^0(V,\T)-\h^1(V,\T)+h^2(V,\T)\\ 
&\, =  K_V \ldot (K_V+D)-1\\
&\, =  7-2(a+b)-be+\sum_{j=1}^sM_j.
\end{align*}

Moreover, when $\lkkf \geq 0$, we see from Lemma \ref{kod0M} that $$\chi(\T) \leq 7.$$

If $\lkkf=2$, then it follows from a result by Iitaka in \cite[Theorem 6]{Iit} that $\h^0(V,\T)=0$. Then we have $$\chi(\T) = h^2(V,\T) -\h^1(V,\T).$$

In \cite[Lemma 4.1, p.219]{Tono05} Tono shows that if first, $\lkkf=2$, and second, the pair $(V,D)$ is \emph{almost minimal} (see \cite{Miyanishi}), then $K_V \ldot (K_V+D) \geq 0$. For plane curves, this result by Tono and a result by Wakabayashi in \cite{Wak} implies that for rational cuspidal curves with three or more cusps we have that $\chi(\T) \geq 0$. Similarly, a rational cuspidal curve on a Hirzebruch surface that fulfills the two prerequisites has $\chi(\T) \geq -1$. While a smiliar result to Wakabayashi's result ensures that three or more cusps implies $\lkkf=2$ \cite{MOEONC}, rationality, however, is no longer a guarantee for almost minimality (see \cite[p.98]{MOEPHD}). Therefore, for rational cuspidal curves with three or more cusps on a Hirzebruch surface, $\chi(\T)$ is not necessarily bounded below.

\hide{If $\lkkf=2$ \emph{and} $(V,D)$ is almost minimal, we can apply a lemma by Tono \cite[Lemma 4.1, p.219]{Tono05}. In this case $K_V \ldot (K_V+D) \geq 0$, hence $$-1 \leq \chi(\T) \leq 7.$$

Going back to the proof of Theorem \ref{ONCfe}, we see that for curves of genus $g$ on $\Fe$, we have that $n <2g+2$. As before, $n$ is the number of exceptional curves not in $D$ that will be contracted by the minimalization morphism. 

For rational cuspidal curves, we see that we have $n=\{0,1\}$. This means that we, in contrast to the situation on $\Po$, are not directly in the situation that the resolution of a rational curve gives an almost minimal pair $(V,D)$. Therefore, $\chi(\T)$ is not necessarily bounded below in this case.} 

For rational cuspidal curves with four cusps on Hirzebruch surfaces $\Fe$ and $\Fh$, where $e \geq 0$ and $h \in \{0,1\}$, the values of $\chi(\T)$ is given in Table \ref{tab:fourcuspchitheta}.

\begin{table}[H]
  \renewcommand\thesubtable{}
  \setlength{\extrarowheight}{2pt}
\centering
	{\noindent \begin{tabular*}{1\linewidth}{@{}c@{\extracolsep{\fill}}l@{}c@{}c}
	\tr
	{\textbf{Type}} & {\textbf{Cuspidal configuration}} &  $\mathbf{\chi(\T)}$ & {\textbf{Surface}}\\
\tr
	{$(2k+1,4)$}  & $[4_{k-1+e},2_3],[2],[2],[2]$& $1-k-e$ & $\Fe$\\
 	{$(3k+1-h,5)$}  & $[4_{2k-1+h},2_3],[2],[2],[2]$& $-1$ & $\Fh$ \\
	{$(2k+2-h,4)$} & $[3_{2k-1+h},2],[2_3],[2],[2]$ & $0$ &$\Fh$ \\
	{$(k+1-h,3)$}  & $[2_{n_1}],[2_{n_2}],[2_{n_3}],[2_{n_4}]$& $-1$ & $\Fh$\\
	{$(0,3)$}  & $[2],[2],[2],[2]$& $-1$ & $\mathbb{F}_2$\\
	\br
	\end{tabular*}}
	\caption[$\chi(\T)$ for rational cuspidal curves with four cusps on $\Fe$.]{$\chi(\T)$ for rational cuspidal curves with four cusps on $\Fe$ and $\Fh$. For the three first series, $k \geq 0$. For the fourth series, $k \geq 2$ and $\sum_{j=1}^4 n_j=2k+h$.}
	\label{tab:fourcuspchitheta}
	\end{table}

%Inspection of the minimal embedded resolution of these four series of curves, leads to the conclusion that the curves have $(V,D)$ that is almost minimal. One exception is the curve with $k=1$ in series $IV$, but this curve is contained in series $III$ as well. For series $III$, we have that $(V,D)$ is \emph{not} almost minimal for all $k \geq 2$. The total number of exceptional curves that will be contracted by the minimalization morphism (see proof of Theorem \ref{ONCfe}) is equal to $k-1$.

An important observation from this list is the fact that $\chi(\T) \leq 0$ for all these curves. We reformulate this observation in a conjecture (cf. \cite{Bobins, FlZa94}).

\begin{conj}
Let $C$ be a rational cuspidal curve with four or more cusps on a Hirzebruch surface $\Fe$. Then $\chi(\T) \leq 0$.
\end{conj}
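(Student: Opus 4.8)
By Lemma~\ref{lemflzadeffe}(5) we have $\chi(\T)=(K_V+D)^2+1$, so the conjecture is equivalent to the inequality $(K_V+D)^2\leq -1$; via Theorem~\ref{chithetafe} it is also equivalent to $\sum_{j=1}^s M_j\leq 2(a+b)+be-7$. Since a rational cuspidal curve with three or more cusps on a Hirzebruch surface satisfies $\lkkf=2$ (see \cite{MOEONC}), Lemma~\ref{kod0M} already gives $\sum M_j\leq 2(a+b)+be$, i.e. $\chi(\T)\leq 7$. The plan is therefore to gain the extra $7$ in this estimate from the hypothesis that $C$ has at least four cusps, and for this it suffices to treat $s=4$: more cusps only strengthen the Bogomolov--Miyaoka--Yau slack used below, so the four-cusp case is the tight one.

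\textbf{Main steps.} First I would revisit the proof of Lemma~\ref{kod0M}: the number of cusps enters, apart from bookkeeping, only through the single estimate $(K_V+D)^2\leq 6$, which comes from the logarithmic BMY inequality applied with $e(\Fe\setminus C)=2$. To improve the final bound by $7$ one needs $(K_V+D)^2\leq -1$ when $s\geq 4$. The natural tool is the refined, ``weighted'' form of the logarithmic BMY inequality due to Orevkov \cite{Ore}, in which each cusp $p_j$ contributes a strictly positive local correction term $\gamma_j$ depending only on its multiplicity sequence, giving an inequality of the shape $(K_V+D)^2+\sum_{j=1}^s\gamma_j\leq 3\,e(\Fe\setminus C)=6$ (more precisely, the analogue of this for the almost minimal model). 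Since each $\gamma_j$ is bounded below by a universal positive constant --- the minimum being attained, roughly, at an ordinary cusp $[2]$ --- four or more cusps would force $\sum_j\gamma_j$ large enough that $(K_V+D)^2\leq -1$. One then re-runs the computation of Lemma~\ref{kod0M} with this sharper bound in place of $(K_V+D)^2\leq 6$, obtaining $\sum M_j\leq 2(a+b)+be-7$, and concludes with Theorem~\ref{chithetafe}.

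\textbf{Main obstacle.} This is precisely why the statement is only a conjecture here: the pair $(V,D)$ obtained from the minimal embedded resolution of a rational cuspidal curve on a Hirzebruch surface need \emph{not} be almost minimal --- unlike the plane case, where rationality forces it (see \cite[p.98]{MOEPHD}) --- so Orevkov's and Tono's inequalities cannot be applied to $(V,D)$ directly. One must first pass to the almost minimal model $(\bar V,\bar D)$, tracking how the (at most one, in the rational case) contraction changes $(K+D)^2$, the Euler characteristic of the complement, and, crucially, the local cusp contributions, since such a contraction may be centred at a point of $D$ produced by resolving a cusp. Making the $\gamma_j$ explicit enough to verify that $\sum_{j=1}^4\gamma_j$ exceeds the required constant for \emph{every} admissible cuspidal configuration, while simultaneously controlling the minimalization, is the delicate heart of the argument; in particular it would have to handle the regime where all four cusps are very mild (e.g. all ordinary), which is exactly where the curves of type $(0,3)$ and $(k+1-h,3)$ of Section~\ref{4C} sit with $\chi(\T)=-1$, so the bound must be sharp there.

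\textbf{An alternative, non-self-contained route.} One could instead try to avoid BMY and argue by the structure of the constructions: every known rational cuspidal curve with four cusps on a Hirzebruch surface is obtained by birational modifications from a short list of low-degree plane models, so one might attempt an inductive proof that $\chi(\T)$ stays $\leq 0$ under the elementary transformations and blow-ups used in Section~\ref{4C}. However, this presupposes the classification, i.e. the companion conjecture that four is the maximal number of cusps, so it would not settle the conjecture on its own.
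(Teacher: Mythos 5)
The statement you are trying to prove is presented in the paper as a \emph{conjecture}: the paper gives no proof of it. Its only support is the computation, via Theorem \ref{chithetafe} and the $M$-numbers of the cusps, of $\chi(\T)$ for the explicit series constructed in Section \ref{4C} (Table \ref{tab:fourcuspchitheta}, values $1-k-e$, $-1$, $0$, $-1$, $-1$), together with the general bound $\chi(\T)\leq 7$ from Lemma \ref{kod0M} when $\lkkf\geq 0$. Indeed the structural results of Section \ref{AR} point, if anywhere, in the opposite direction: under $\lkkf=2$ and almost minimality Tono's lemma gives a \emph{lower} bound $K_V\ldot(K_V+D)\geq 0$, and the paper explicitly notes that almost minimality can fail here. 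So there is no proof in the paper to compare your argument with, and your text, as you yourself concede in the ``main obstacle'' paragraph, does not establish the conjecture either.

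Beyond the almost-minimality issue you flag, there are two concrete gaps in your sketch. First, the quantitative heart of the strategy is unsubstantiated: to improve Lemma \ref{kod0M} from $\sum M_j\leq 2(a+b)+be$ to $\sum M_j\leq 2(a+b)+be-7$ you need the four local corrections $\gamma_j$ to sum to at least $7$, i.e.\ an average of $7/4$ per cusp even when all four cusps are ordinary; Orevkov's inequality in \cite{Ore} provides no per-cusp gain of that size for a cusp $[2]$, and the $(0,3)$-curve on $\mathbb{F}_2$ (four cusps $[2]$, $\sum M_j=4$, Lemma \ref{kod0M} giving $\leq 12$, the conjecture requiring $\leq 5$) shows the needed refinement is an improvement by exactly $7$ in the mildest configuration, so a ``universal positive constant per cusp'' argument cannot be soft --- it must be sharp precisely where the local contributions are smallest. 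Second, the opening reduction ``it suffices to treat $s=4$'' is not justified: by Theorem \ref{chithetafe}, $\chi(\T)=7-2(a+b)-be+\sum_j M_j$, and a fifth cusp adds a positive term $M_5\geq 1$ to the right-hand side, so more cusps make the quantity to be bounded \emph{larger}; the claim that the extra BMY slack compensates is again exactly the unproved quantitative point. Your alternative route via the constructions of Section \ref{4C} is, as you note, circular, since it presupposes the companion conjecture that four is the maximal number of cusps. In short, your proposal is a reasonable research plan, but it neither matches anything in the paper (which proves nothing here) nor closes the conjecture.
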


\subsection{On the multiplicity}
In the following we establish a result on the multiplicities of the cusps on a rational cuspidal curve on a Hirzebruch surface. Note that this work is inspired by Orevkov (see \cite{Ore}).

Assume that $C$ is a rational cuspidal curve on a Hirzebruch surface $\Fe$. Let $p_1,\ldots,p_s$ denote the cusps of $C$, and $m_{p_1},\ldots,m_{p_s}$ their multiplicities. Renumber the cusps such that $m_{p_1}\geq m_{p_2} \geq \ldots \geq m_{p_s}$. Then for curves with $\lkkf \geq 0$ we are able to establish a lower bound on $m_{p_1}$. 

\begin{thm}
A rational cuspidal curve $C$ of type $(a,b)$ on $\Fe$, with $\lkkf \geq 0$ and $s$ cusps, must have at least one cusp $p_1$ with multiplicity $m:= m_{p_1}$ that satisfies the below inequality,
$$m > \frac{3}{2}+a+b-\frac{1}{2}\sqrt{1+20(a+b)+4(a^2+b^2)+4be(1-b)}.$$ 
\end{thm}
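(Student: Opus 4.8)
The plan is to combine the $M$-number bound from Lemma \ref{kod0M} with elementary bounds relating the $M$-numbers to the multiplicities of the cusps, and then solve the resulting quadratic inequality for $m=m_{p_1}$. Since $\lkkf \geq 0$, Lemma \ref{kod0M} gives $\sum_{j=1}^s M_j \leq 2(a+b)+be$, so it suffices to produce a lower bound for $\sum_{j=1}^s M_j$ in terms of the multiplicities, and in particular in terms of $m$.

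First I would recall that for a single cusp $p$ with multiplicity sequence $\ol{m}=[m_0,m_1,\ldots,m_{t-1}]$ one has $M = \sum_{i=0}^{t-1}(m_i-1)+\sum_{i=1}^{t-1}\bigl(\lceil m_{i-1}/m_i\rceil-1\bigr)-1 \geq \eta-1 = \sum_{i=0}^{t-1}(m_i-1)-1$. The key combinatorial input is the standard lower bound on $\eta$ for a cusp of multiplicity $m_0=\mu$: since the multiplicity sequence must, roughly speaking, descend from $\mu$ to $1$ and the entries satisfy the semigroup/proximity relations of \cite{FlZa95}, one gets $\eta = \sum(m_i-1) \geq$ something quadratic in $\mu$; the crudest usable version is $\eta \geq 2(\mu-1)$ (achieved by $[\mu,1,\ldots,1]$ with $\mu-1$ ones, giving $\eta = (\mu-1)+(\mu-1)$), and for the remaining $s-1$ cusps simply $M_j \geq M_{\min}$ for some absolute constant (each cusp has $m_i\geq 2$ somewhere, so $\eta\geq 2$ and $M_j \geq 0$, or one uses the minimum $M_j\geq 0$). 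Plugging in, $\sum_j M_j \geq (2(m-1)-1) + 0 \cdot (s-1)$; more carefully one should check whether the paper wants the sharper bound $\eta\geq \frac{3}{2}(\mu-1)$ or the exact delta-based bound, but comparing with the target inequality I expect they use precisely $\sum_j M_j \geq$ a quadratic expression in $m$ coming from $\delta_1 \geq$ something, since the radical $\sqrt{1+20(a+b)+4(a^2+b^2)+4be(1-b)}$ has the shape of a discriminant of a quadratic in $m$.

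The cleanest route that produces exactly that discriminant: use the genus formula (Corollary \ref{genusfe}) with $g=0$, which gives $\sum_{j,i} \binom{m_i}{2} = \binom{b-1}{2}(\ldots)$ — wait, more precisely $\sum_j \delta_j = \frac{(b-1)(2a-2+be)}{2}$ — together with $M_j \geq 2\delta_j / m_{p_j} - 1$ or a similar per-cusp inequality bounding $M_j$ below by a linear function of $\delta_j$ divided by $m$. Concretely, since $\eta_j = \sum_i(m_i-1)$ and $\delta_j=\sum_i\binom{m_i}{2} = \frac12\sum_i m_i(m_i-1) \leq \frac{m-1}{2}\sum_i m_i \cdot$ — I would bound $\sum_i m_i(m_i-1) \le (m_{p_j}-1)\sum_i m_i$ is wrong direction; instead use $m_i(m_i-1) \le m_i \cdot (m-1)$ only for the first term, so better: $2\delta_j = \sum m_i(m_i-1)$ and $\eta_j=\sum(m_i-1)$, hence $2\delta_j \le m \cdot \eta_j$ (since $m_i \le m$ for all $i$), giving $\eta_j \ge 2\delta_j/m$ and thus $M_j \ge 2\delta_j/m - 1$. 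Summing over $j$: $\sum_j M_j \ge \frac{2}{m}\sum_j \delta_j - s = \frac{(b-1)(2a-2+be)}{m} - s$. Combining with Lemma \ref{kod0M} and a bound $s \le$ (something), one arrives at a quadratic inequality $m^2 - (3+2(a+b))m + \bigl(\text{stuff}\bigr) \le 0$ whose larger root — or rather, whose failure forces — yields the stated bound.

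The main obstacle I anticipate is getting the bookkeeping on $s$ and on the per-cusp constant exactly right so that the constants $\tfrac32$, $20(a+b)$, $4(a^2+b^2)$ and $4be(1-b)$ match; this requires being careful about whether one bounds $s$ by $4$, or keeps $s$ symbolic and uses that $s\le$ the number allowed by Corollary \ref{RHM}, and about whether the per-cusp inequality is $M_j \ge 2\delta_j/m - 1$ or the sharper $M_j \ge \eta_j - 1$ with $\eta_j$ estimated directly. I would therefore: (1) write $m := m_{p_1} \ge m_{p_j}$ for all $j$; (2) establish $\sum_j M_j \ge \sum_j(\eta_j - 1) \ge \frac{2}{m}\sum_j\delta_j - s$; (3) substitute $\sum_j \delta_j$ from the genus formula with $g=0$; (4) invoke Lemma \ref{kod0M} to get $\frac{(b-1)(2a-2+be)}{m} - s \le 2(a+b)+be$; (5) absorb $s$ (using $s\ge 1$ on the left is not enough — rather note the inequality is only interesting when several cusps are present, or bound $s$ by the trivial estimate) and rearrange into $m^2 + (\text{linear in }a,b,e)\,m + (\text{const}) \ge 0$ written the other way, i.e., $m^2 - (2(a+b)+3)m + \cdots \le 0$ cannot hold, forcing $m$ above the larger root; (6) apply the quadratic formula and simplify the discriminant to the displayed expression. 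The algebra in step (6) is routine once steps (2)–(5) pin down the coefficients, so the real care is in steps (2) and (5).
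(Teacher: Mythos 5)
Your overall scaffolding matches the paper's: combine Lemma \ref{kod0M} (the upper bound $\sum_j M_j \leq 2(a+b)+be$ valid when $\lkkf\geq 0$) with a lower bound on $\sum_j M_j$ involving $m$, substitute $\sum_j\delta_j=\frac{(b-1)(2a-2+be)}{2}$ from the genus formula with $g=0$, and solve a quadratic in $m$. But the per-cusp lower bound you propose is too weak to produce the stated inequality, and this is a genuine gap, not just bookkeeping. Your estimate is $M_j \geq \eta_j-1 \geq \frac{2\delta_j}{m}-1$, which after summing gives $2(a+b)+be \geq \frac{(b-1)(2a-2+be)}{m}-s$; clearing the denominator, \emph{every} term is linear in $m$, so you obtain only a bound of the form $m \geq \frac{(b-1)(2a-2+be)}{2(a+b)+be+s}$. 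There is no source of an $m^2$ term in your chain, so your step (5) ``rearrange into $m^2-(2(a+b)+3)m+\cdots\leq 0$'' does not follow from anything you have established, and the discriminant $\sqrt{1+20(a+b)+4(a^2+b^2)+4be(1-b)}$ cannot appear. Moreover, the leftover $-s$ cannot be ``absorbed'': at this point in the paper no bound on the number of cusps is available (that is the subject of the companion article), and the result is supposed to hold for all $s$.

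The missing ingredient is Orevkov's sharper per-cusp inequalities, which is exactly what the paper invokes (\cite[Lemma 4.1 and Corollary 4.2, pp.663--664]{Ore}): for the cusp $p_1$ of maximal multiplicity $m$ one has $M_1 > \frac{\mu_1}{m}+m-3$, and for every cusp $M_j \geq \frac{\mu_j}{m_{p_j}}$. The additive $+m-3$ in the first inequality is precisely what creates the quadratic in $m$ after multiplying through, and the second inequality (together with $m\geq m_{p_j}$, so $M_j-\frac{\mu_j}{m}\geq M_j-\frac{\mu_j}{m_{p_j}}\geq 0$) lets one discard the contribution of the remaining cusps with no $-s$ penalty. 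With these, the paper gets $2(a+b)+be > \frac{(b-1)(2a-2+be)}{m}+m-3$ and then solves for $m$; your elementary bound $2\delta_j\leq m\,\eta_j$ (which is correct as far as it goes) simply cannot replace Orevkov's lemma, since the latter encodes nontrivial information about multiplicity sequences beyond the genus/delta count. To repair your proof you would need to import (or reprove) those two results of Orevkov and then redo the algebra as in steps (4)--(6).
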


\begin{proof}
Using Lemma \ref{kod0M} and a lemma by Orevkov \cite[Lemma 4.1 and Corollary 4.2, pp.663--664]{Ore}, we get
\begin{align*}
2(a+b)+be&\geq \sum_{j=1}^s M_j\\
& \geq M_{1}+\sum_{j=2}^sM_{j}\\
&> \frac{\mu_{1}}{m}+m-3+\sum_{j=2}^s M_{j}\\
&=\frac{(b-1)(2a-2+be)}{m}+m-3+\sum_{j=2}^s\Bigr(M_{j}-\frac{\mu_{j}}{m}\Bigr)\\
&\geq \frac{2ab-2(a+b)+2+b^2e-be}{m}+m-3+\sum_{j=2}^s\Bigr(M_{j}-\frac{\mu_{j}}{m_{j}}\Bigr)\\
&\geq \frac{2ab-2(a+b)+2+b^2e-be}{m}+m-3.
\end{align*}
\noi This means that  
$$0 > \frac{2ab-2(a+b)+2+b^2e-be}{m}+m-3-2(a+b)-be.$$

\noi Let $$g(a,b,m)= \frac{2ab-2(a+b)+2+b^2e-be}{m}+m-3-2(a+b)-be.$$ Factoring $g$, we have that $g<0$ for  
$$m > \frac{3}{2}+a+b-\frac{1}{2}\sqrt{1+20(a+b)+4(a^2+b^2)+4be(1-b)}.$$ 
\end{proof}

\begin{cor}
A rational cuspidal curve $C$ of type $(a,b)$ with two or more cusps on a Hirzebruch surface $\Fe$ must have at least one cusp $p_1$ with multiplicity $m$ that satisfies the below inequality,
$$m > \frac{3}{2}+a+b-\frac{1}{2}\sqrt{1+20(a+b)+4(a^2+b^2)+4be(1-b)}.$$  
\end{cor}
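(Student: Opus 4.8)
The plan is to deduce the corollary directly from the preceding theorem by showing that the hypothesis "$C$ has two or more cusps" forces $\lkkf \geq 0$, after which the stated inequality is exactly the conclusion of the theorem. So the whole task reduces to verifying the implication
\[
s \geq 2 \;\Longrightarrow\; \lkkf \geq 0
\]
for a rational cuspidal curve $C$ on $\Fe$, and then invoking the theorem verbatim.

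First I would recall the analogue of Wakabayashi's result alluded to in the paper (and developed in \cite{MOEONC}): a rational cuspidal curve on a Hirzebruch surface with three or more cusps has $\lkkf = 2$, in particular $\lkkf \geq 0$. That disposes of the case $s \geq 3$ immediately. The remaining case is $s = 2$, which must be handled separately. Here I would argue that a rational \emph{bicuspidal} curve on $\Fe$ also has $\lkkf \geq 0$: the only way $\lkkf = -\infty$ could occur is if the complement $\Fe \setminus C$ contains a cylinder, i.e.\ if after the minimal embedded resolution and contraction the boundary divisor $D$ degenerates to something with too few components relative to the negative-curve structure; a classification of the possible configurations (using that $D$ is a rational tree by Lemma \ref{lemflzadef}(0) and counting components via the genus formula, Corollary \ref{genusfe}) shows two cusps already supply enough branching that $\lkkf \neq -\infty$. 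Alternatively, and more cleanly, I would cite the relevant statement from \cite{MOEONC} or \cite{MOEPHD} that pins down $\lkkf$ for rational cuspidal curves on Hirzebruch surfaces by number of cusps.

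With $\lkkf \geq 0$ established under the hypothesis $s \geq 2$, the previous theorem applies word for word: Lemma \ref{kod0M} gives $\sum_{j=1}^s M_j \leq 2(a+b)+be$, Orevkov's lemma \cite[Lemma 4.1 and Corollary 4.2, pp.663--664]{Ore} bounds $M_1$ from below in terms of $\mu_1$ and $m$, the genus formula (Corollary \ref{genusfe}, with $g=0$) evaluates $\mu_1 = (b-1)(2a-2+be)$, and the remaining $M_j$ for $j \geq 2$ are nonnegative, yielding the quadratic inequality in $m$ whose solution is
\[
m > \frac{3}{2}+a+b-\frac{1}{2}\sqrt{1+20(a+b)+4(a^2+b^2)+4be(1-b)}.
\]

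The main obstacle I anticipate is the $s = 2$ case of the implication $s \geq 2 \Rightarrow \lkkf \geq 0$: unlike $s \geq 3$, bicuspidality does not automatically force $\lkkf = 2$, so one genuinely needs to rule out $\lkkf = -\infty$ (and the case $\lkkf = 0$ or $1$ is fine since the theorem only needs $\lkkf \geq 0$). If a clean citation from \cite{MOEONC} is not available, this would require a short case analysis of the dual graph of $D$ for a bicuspidal rational curve, checking that no $\Pot$- or $\CC$-fibration on the complement can exist; this is where I would expect to spend the real effort. Everything downstream is a direct quotation of the theorem and its proof.
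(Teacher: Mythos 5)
Your proposal matches the paper's argument: the paper's proof is precisely the observation that, by the result in \cite{MOEONC} on the logarithmic Kodaira dimension of the complement to a curve with two (or more) cusps, the hypothesis $s\geq 2$ gives $\lkkf\geq 0$, so the preceding theorem applies verbatim. Your identification of the $s=2$ case as the only point needing the external input, and your primary plan of citing \cite{MOEONC} for it, is exactly the route the paper takes.
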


\begin{proof}
The corollary follows directly from the result in \cite{MOEONC} on the logarithmic Kodaira dimension of the complement to a curve with two cusps.
\end{proof}

\begin{rem}
Note that this theorem will exclude some potential curves. For example, a rational cuspidal curve of type $(a,4)$ on $\mathbb{F}_1$ with two or more cusps (see \cite{MOEONC}) must have at least one cusp with multiplicity $m=3$ for any $a\geq6$. We also have that any rational cuspidal curve of type $(a,5)$ on $\mathbb{F}_1$ with two or more cusps must have at least one cusp with multiplicity $m=3$ for any $a\geq3$. Similarly, any rational cuspidal curve of type $(a,b)$ on $\mathbb{F}_1$ with two or more cusps and $b\geq 6$ must have at least one cusp with multiplicity $m=3$.
\end{rem}

\subsection{Real cuspidal curves}
It is well known that the known plane rational cuspidal curves with three cusps can be defined over $\R$ \cite{FlZa95,FlZa97,Fen99a}. That is not the case for the plane rational cuspidal quintic curve with cuspidal configuration ${[2_3],[2],[2],[2]}$ (see \cite{MOEPHD}). 

On the Hirzebruch surfaces, the question whether all cusps on real cuspidal curves can have real coordinates is still hard to answer. Recall that we call $C=\V(F)$ a real curve if the polynomial $F$ has real coefficients. However, all known curves on $\Fe$ can be constructed from curves on $\Po$. Since the birational transformations can be given as real transformations, if it is possible to arrange the curve on $\Po$ such that the preimages of the cusps have real coordinates, then the cusps will have real coordinates on the curve on the Hirzebruch surface as well. Note the possibility that this arrangement is not always attainable.

Considering the rational cuspidal curves on $\Fe$ with four cusps, we see that most of them are constructed from the plane rational cuspidal quintic with cuspidal configuration ${[2_3],[2],[2],[2]}$. Hence, we expect that the cusps on these curves can not all have real coordinates when the curve is real. Contrary to this intuition, however, there are examples of fourcuspidal curves with this property.

\begin{prp}
The series of rational cuspidal curves of type ${(k+1-h,3)}$, $k \geq 2$, with four cusps and cuspidal configuration ${[2_{n_1}],[2_{n_2}],[2_{n_3}],[2_{n_4}]}$, where the indices satisfy $\sum_{j=1}^4n_j=2k+h$, on the Hirzebruch surfaces $\Fh$, $h \in \{0,1\}$, has the property that all cusps can be given real coordinates on a real curve. 
\end{prp}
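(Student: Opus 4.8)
The plan is to trace through the constructions in Theorem \ref{2erne} and its alternative proof, and verify that every blowing-up center can be chosen with real coordinates, so that the real structure is preserved at each step. The key observation is that all the birational transformations (blowing ups and contractions of fibers) used in the construction are defined over $\R$, so if we start with a real curve on $\Po$ and every point we blow up has real coordinates, then the resulting curve on $\Fh$ is real and all of its cusps have real coordinates.

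First I would recall the two possible starting points. In the main proof of Theorem \ref{2erne}, the construction begins with the plane rational cuspidal quartic with three cusps and cuspidal configuration $[2],[2],[2]$; this curve can be defined over $\R$ with all three cusps real (for instance the dual of a nodal cubic, or explicitly $x^2y^2+y^2z^2+z^2x^2-2xyz(x+y+z)=0$). One then picks a general smooth \emph{real} point $p_4$ on $C$, takes the real tangent line $T$, and the residual intersection $T\cdot C = 2p_4+t_1+t_2$. Here I would note that the residual points $t_1,t_2$ need not individually be real, but they form a Galois-stable pair, so one can still blow up both and contract $T$ over $\R$, landing on a real $(3,3)$-curve on $\PP$ with four real ordinary cusps. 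Alternatively, following the alternative proof, one starts from the real cuspidal cubic $y^2z-x^3$, blows up a real general point $s=(0:1:1)$, and uses the polar curve $2yz+y^2$: of its three intersection points with $C$, one is real and the other two are complex conjugate — so here one must be slightly careful and choose the starting configuration so that the relevant points used in the \emph{subsequent} blow-ups are real.

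The main inductive step is the crucial point to check. In the inductive step of Theorem \ref{2erne} one blows up at a residual point $r^{k-1}_j$ on a fiber $L^{k-1}_j$ with $L^{k-1}_j\cdot C_{k-1} = 2p^{k-1}_j + r^{k-1}_j$. Since $p^{k-1}_j$ is a real cusp and $L^{k-1}_j$ is the real fiber through it, and $C_{k-1}$ is real of bidegree $(\ast,3)$, the residual point $r^{k-1}_j$ is the unique remaining intersection point of a real fiber with a real curve — hence it is automatically real (a single Galois-stable point is fixed by conjugation). Therefore every blow-up center in the induction is real, the contracted fibers are real, and real-ness propagates. I would then observe that each cusp in the configuration $[2_{n_j}]$ is built up by repeatedly performing this real blow-up/contraction pair at the real point lying over the cusp, so the cusp itself stays real throughout. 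The passage from $\PP$ to $\Fen$ by an elementary transformation at the real point $r^{k-1}_1$ is likewise real.

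The hardest part of writing this carefully is the bookkeeping at the \emph{start}: one must ensure the initial four-cuspidal $(3,3)$-curve on $\PP$ can be chosen with all four cusps real \emph{and} with the four auxiliary fibers $L^2_j$ and their residual points $r^2_j$ real, and similarly for the curves $C_3$ and $C_4$ with even $n_j$ (where the \texttt{Maple} computation showing $r^2_1,r^2_2$ lie on distinct $(0,1)$-fibers must be redone over $\R$). Once the real base cases are established, the induction goes through verbatim because, as noted, the residual intersection of a real fiber with a real trigonal curve after removing the double point at a real cusp is a single real point. I would therefore structure the proof as: (i) exhibit a real base curve with all data real, citing an explicit equation; (ii) observe the birational maps are real; (iii) run the induction, remarking at each stage that the unique residual point on a real fiber is real; (iv) conclude that all four cusps carry real coordinates on the real curve $C_{h,k}$ for every admissible $(h,k,n_1,n_2,n_3,n_4)$.
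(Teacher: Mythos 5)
Your overall strategy is the same as the paper's: start from the real tricuspidal quartic, pass to a $(3,3)$-curve on \PP{} by a real birational map, and then run the induction using the observation that a real fiber through a real cusp meets the real trigonal curve in a residual point that is automatically real (the paper phrases this as a real cubic in $y_0,y_1$ with a real double root, hence a real simple root). Steps (ii) and (iii) of your outline are essentially the paper's argument.

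However, there is a genuine gap at the base case, and it is exactly the point where the paper does real work. You assert that if the residual intersections $t_1,t_2$ of the tangent line $T$ with the quartic are a complex-conjugate pair, one can still blow them up and contract $T$ over $\R$ and land on "a real $(3,3)$-curve on \PP{} with four real cusps." But blowing up a conjugate pair of points of \Po{} and contracting the real line through them produces the \emph{nontrivial} real form of the quadric (the two rulings are the pencils of lines through $t_1$ and through $t_2$, so complex conjugation exchanges them). On that real form no fiber of either ruling through a real point is real, so the very mechanism your induction relies on --- choosing a real fiber $L_j$ through the real cusp $p_j$ and blowing up its real residual point --- is unavailable, and the subsequent elementary transformations are not defined over $\R$ as you use them. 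Consequently the whole argument hinges on showing that one can choose a real tangent line to the real quartic whose two transversal intersection points are themselves real; you flag this as "the hardest part" of the bookkeeping but never establish it, and your Galois-pair remark suggests (incorrectly) that it can be bypassed. The paper resolves precisely this point by an explicit computation: for the quartic $y^2z^2+x^2z^2+x^2y^2-2xyz(x+y+z)$ it exhibits the tangent line $\tfrac{2048}{125}x+\tfrac{2048}{27}y-\tfrac{1048576}{3375}z=0$, tangent at $(\tfrac{64}{9}:\tfrac{64}{25}:1)$ and meeting the curve transversally at the real points $(16:\tfrac{16}{25}:1)$ and $(\tfrac{4}{9}:4:1)$, which guarantees the standard real form of \PP{} with real rulings, after which the induction proceeds as you describe. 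Without such an existence statement (explicit or otherwise), your proposal does not prove the proposition.
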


\begin{proof}
We have seen that the series of curves can be constructed using the plane rational cuspidal quartic $C$ with three cusps. Let $$y^2z^2+x^2z^2+x^2y^2-2xyz(x+y+z)$$ be a real defining polynomial of $C$. Then it is possible to find a tangent line to $C$ that intersects $C$ in three real points. For example, choose the line $T$ defined by $$\frac{2048}{125}x+\frac{2048}{27}y-\frac{1048576}{3375}z=0.$$ This line is tangent to $C$ at the point ${(\frac{64}{9}: \frac{64}{25}: 1)}$, and it intersects $C$ transversally at the points ${(16: \frac{16}{25}: 1)}$ and ${(\frac{4}{9}: 4: 1)}$. With this configuration, there exists a birational transformation from $\Po$ to $\PP$ that preserves the real coordinates of the cusps on $C$ and constructs a fourth cusp with real coordinates on the strict transform of $C$ on $\PP$. We blow up the two real points at the transversal intersections and contract the tangent line $T$, using the birational map from $\Po$ to $\PP$ (see \cite{MOEPHD}). In coordinates, the map is given by a composition of a (real) change of coordinates on $\Po$ and the map $\phi$,
$$\begin{array}{cccc}
\phi: &\Po &\dashrightarrow &\PP \\
&(x:y:z) &\mapsto& (x:y;z:x),
\end{array}$$
with inverse
$$\begin{array}{cccc}
\phi^{-1}:& \PP &\dashrightarrow &\Po \\
&(x_0:x_1;y_0:y_1) &\mapsto &(x_0y_1:x_1y_1:x_0y_0).
\end{array}$$
The strict transform of $C$ is a real curve $C'$ on $\PP$ of type $(3,3)$ and cuspidal configuration ${[2],[2],[2],[2]}$, and all the cusps have real coordinates.

On $\PP$, since the cusps $p_j$ have real coordinates, a fiber, say $L_j$, intersecting $C'$ at a cusp is real. Using the defining polynomial of $L_j$ to substitute one of the variables $x_0$ or $x_1$ in the defining polynomial of $C$ and removing the factor of $x_i^3$, we are left with a polynomial with real coefficients in $y_0$ and $y_1$ of degree $3$. This polynomial has a double real root, and one simple, hence real, root. The double root corresponds to the $y$-coordinates of the cusp $p_j$, and the simple root to the $y$-coordinates of a smooth intersection point $r_j$ of $C$ and $L_j$. Successively blowing up at any $r_j$ and contracting the corresponding $L_j$ lead to the desired series of curves. Since the points we blow up have real coordinates, the transformations preserve the real coordinates of the cusps. Hence, all the curves in the series can have four cusps with real coordinates. 
\end{proof}

An image of a real rational cuspidal curve of type $(3,3)$ with four ordinary cusps on $\PP$ is given in Figure \ref{4realcusps}. In the figure, the surface $\PP$ is embedded in $\mathbb{P}^3$ using the Segre embedding, and we have chosen a suitable affine covering of $\mathbb{P}^3$. The image is created in cooperation with Georg Muntingh using \verb+surfex+ \cite{Surfex}.

\begin{center}
\begin{figure}[H]
\centering
    \includegraphics[width=10cm]{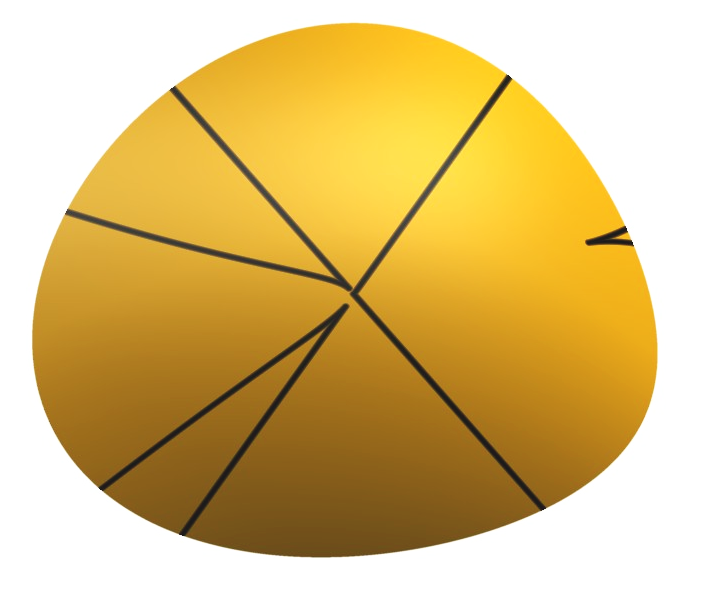}
\caption[A real rational cuspidal curve with four ordinary cusps on $\PP$.]{A real rational cuspidal curve of type $(3,3)$ with four ordinary cusps on $\PP$.}
\label{4realcusps}
\end{figure}
\end{center}

\appendix
\section{Construction of curves}\label{Appendix:A}
\noi We show by two examples the explicit construction of some of the rational cuspidal curves in Theorem \ref{thm:4cusp}. This is done with the computer program \verb+Maple+ \cite{Maple} and the package \verb+algcurves+. See \cite{MOEPHD} for description of the maps. 

\subsection{The $(1,4)$-curves of Theorem \ref{thm:4cusp}}
\begin{ex}
The curves of type $(1,4)$ on $\Fe$, $e \geq 1$, can be constructed and checked with the following code. Here we have done this up to $e=2$, but the procedure generalizes to any $e \geq 1$. We first take the defining polynomial of the plane rational cuspidal quintic with four cusps, and verify its cuspidal configuration with the command \verb+singularities+. The \verb+Maple+ output of the command \verb+singularities+ is a list with elements on the form $$[[x,y,z],m,\delta,b],$$ where $[x,y,z]$ denotes the plane projective coordinates, $m$ the multiplicity, $\delta$ the delta invariant, and $b$ the number of branches of the singularity. 
{\scriptsize
\begin{verbatim}


> with(algcurves):

> F := y^4*z-2*x*y^2*z^2+x^2*z^3+2*x^2*y^3-18*x^3*y*z-27*x^5:

> singularities(F, x, y);
{[[0, 0, 1], 2, 3, 1], [[RootOf(27*_Z^3+16*z^3), -(9/4)*RootOf(27*_Z^3+16*z^3)^2/z, 1], 2, 1, 1]}


\end{verbatim}
}

\noi We next move the curve such that we can blow up the appropriate point on $\Po$. By inspection of the defining polynomial, we find the tangent line to the curve at the cusp with multiplicity sequence $[2_3]$ to be $\V(x)$, and its smooth intersection point with the curve is $(0:1:0)$. We then change coordinates and move $(0:1:0)$ to $(0:0:1)$. 
{\scriptsize
\begin{verbatim}


> z := 1: sort(F);
-27*x^5+2*x^2*y^3-18*x^3*y+y^4-2*x*y^2+x^2

> singularities(F*x, x, y); unassign('z'):
{[[0, 0, 1], 3, 7, 2], [[0, 1, 0], 2, 1, 2],
[[(4/31)*RootOf(27*_Z^3+108*_Z-92)-24/31-(9/31)*RootOf(27*_Z^3+108*_Z-92)^2,...
 ...-24/31-(27/31)*RootOf(27*_Z^3+108*_Z-92)-(9/31)*RootOf(27*_Z^3+108*_Z-92)^2, 1], 2, 1, 1]}

> x := xa: y := za: z := ya:


\end{verbatim}
}
%[[(4/31)*RootOf(27*_Z^3+108*_Z-92)-24/31-(9/31)*RootOf(27*_Z^3+108*_Z-92)^2, -24/31-(27/31)*RootOf(27*_Z^3+108*_Z-92)-(9/31)*RootOf(27*_Z^3+108*_Z-92)^2, 1], 2, 1, 1]

\noi Now we blow up $(0:0:1)$, take the strict transform and check that we have the $(1,4)$-curve on $\Fen$ by finding its singularities in all four affine coverings. Note that \verb+Maple+ provides false singularities, since the \verb+algcurve+ package considers curves as objects on $\Po$. The existing singularities have coordinates on the form $[\cdot,\cdot,1]$. 

{\scriptsize
\begin{verbatim}


> xa := x0*y1: ya := x1*y1: za := y0: 

> factor(F);
-y1*(-y0^4*x1+2*y1^2*x0*y0^2*x1^2-y1^4*x0^2*x1^3-2*y1*x0^2*y0^3+18*y1^3*x0^3*y0*x1+27*y1^4*x0^5)

> F := -y0^4*x1+2*y1^2*x0*y0^2*x1^2-y1^4*x0^2*x1^3-2*y1*x0^2*y0^3+18*y1^3*x0^3*y0*x1+27*y1^4*x0^5:

> x0 := 1: y0 := 1: singularities(F, x1, y1); unassign('x0', 'y0'):
{[[0, 1, 0], 3, 3, 3], [[1, 0, 0], 4, 9, 1], 
[[RootOf(16*_Z^3+27), -(4/9)*RootOf(16*_Z^3+27), 1], 2, 1, 1]}

> x0 := 1: y1 := 1: singularities(F, x1, y0); unassign('x0', 'y1'):
{[[1, 0, 0], 2, 3, 1], [[RootOf(16*_Z^3+27), (4/3)*RootOf(16*_Z^3+27)^2, 1], 2, 1, 1]}

> x1 := 1: y1 := 1: singularities(F, x0, y0); unassign('x1', 'y1'):
{[[0, 0, 1], 2, 3, 1], [[RootOf(27*_Z^3+16), -(9/4)*RootOf(27*_Z^3+16)^2, 1], 2, 1, 1]}

> x1 := 1: y0 := 1: singularities(F, x0, y1); unassign('x1', 'y0'):
{[[0, 1, 0], 5, 13, 4], [[1, 0, 0], 4, 12, 4], 
[[RootOf(27*_Z^3+16), (3/4)*RootOf(27*_Z^3+16), 1], 2, 1, 1]}


\end{verbatim}
}

\noi The curve on $\Fen$ is positioned in such a way that we can apply the Hirzebruch one up transformation repeatedly, and get the $(1,4)$-curve on $\Fe$ for any $e$. After one transformation we have a curve on $\mathbb{F}_2$ with the prescribed singularities. The following code verifies the latter claim.
{\scriptsize
\begin{verbatim}


> x0 := x0a: x1 := x1a: y0 := y0a: y1 := x0a*y1a: 

> F;
-y0a^4*x1a+2*x0a^3*y1a^2*y0a^2*x1a^2-x0a^6*y1a^4*x1a^3-2*x0a^3*y1a*y0a^3
+18*x0a^6*y1a^3*y0a*x1a+27*x0a^9*y1a^4

> x0a := 1: y0a := 1: singularities(F, x1a, y1a); unassign('x0a', 'y0a'):
{[[0, 1, 0], 3, 3, 3], [[1, 0, 0], 4, 9, 1], 
[[RootOf(16*_Z^3+27), -(4/9)*RootOf(16*_Z^3+27), 1], 2, 1, 1]}

> x0a := 1: y1a := 1: singularities(F, x1a, y0a); unassign('x0a', 'y1a'):
{[[1, 0, 0], 2, 3, 1], [[RootOf(16*_Z^3+27), (4/3)*RootOf(16*_Z^3+27)^2, 1], 2, 1, 1]}

> x1a := 1: y1a := 1: singularities(F, x0a, y0a); unassign('x1a', 'y1a'):
{[[0, 0, 1], 4, 9, 1], [[0, 1, 0], 5, 16, 4], [[RootOf(27*_Z^3+16), 4/3, 1], 2, 1, 1]}

> x1a := 1: y0a := 1: singularities(F, x0a, y1a); unassign('x1a', 'y0a'):
{[[0, 1, 0], 9, 45, 4], [[1, 0, 0], 4, 18, 4], [[RootOf(27*_Z^3+16), 3/4, 1], 2, 1, 1]}


\end{verbatim}
}
\end{ex}

\subsection{A $(5,4)$-curve of Theorem \ref{thm:4cusp}}
\begin{ex}
Instead of constructing the entire series of $(1,4)$ curves, we may take the $(1,4)$-curve on $\Fen$ and construct a $(5,4)$-curve on $\PP$. We then have to change coordinates before we apply the Hirzebruch one down transformation. The curve on $\Fen$ has a cusp at $(0:1;0,1)$, which we must move, say to $(0:1;1,1)$, before we apply the transformation to $\PP$. Then we check that we have the $(5,4)$-curve on $\PP$ with the prescribed singularities.
{\scriptsize
\begin{verbatim}


> x0 := x0a: x1 := x1a: y0 := y0a-x1a*y1a: y1 := y1a:

> x0a := x0b: x1a := x1b: y0a := x0*y0b: y1a := y1b:

> F;
-x1b*x0b^4*y0b^4+4*x0b^3*y0b^3*x1b^2*y1b-6*x0b^2*y0b^2*x1b^3*y1b^2
+4*x0b*y0b*x1b^4*y1b^3-x1b^5*y1b^4+2*y1b^2*x0b^3*x1b^2*y0b^2
-4*y1b^3*x0b^2*x1b^3*y0b+2*y1b^4*x0b*x1b^4+y1b^4*x0b^2*x1b^3
-2*y1b*x0b^5*y0b^3+6*y1b^2*x0b^4*y0b^2*x1b-6*y1b^3*x0b^3*y0b*x1b^2
+18*y1b^3*x0b^4*x1b*y0b-18*y1b^4*x0b^3*x1b^2+27*y1b^4*x0b^5

> x0b := 1: y0b := 1: singularities(F, x1b, y1b); unassign('x0b', 'y0b'):
{[[0, 1, 0], 5, 10, 5], [[1, 0, 0], 4, 15, 1], 
[[RootOf(16*_Z^3+27), 4/9-(16/27)*RootOf(16*_Z^3+27)+(16/81)*RootOf(16*_Z^3+27)^2, 1], 2, 1, 1]}

> x0b := 1: y1b := 1: singularities(F, x1b, y0b); unassign('x0b', 'y1b'):
{[[1, 1, 0], 2, 3, 1], 
[[RootOf(16*_Z^3+27), RootOf(16*_Z^3+27)+(4/3)*RootOf(16*_Z^3+27)^2, 1], 2, 1, 1]}

> x1b := 1: y1b := 1: singularities(F, x0b, y0b); unassign('x1b', 'y1b'):
{[[0, 1, 0], 4, 15, 1], [[1, 0, 0], 3, 3, 3], 
[[RootOf(27*_Z^3+16), -(9/4)*RootOf(27*_Z^3+16)-(27/16)*RootOf(27*_Z^3+16)^2, 1], 2, 1, 1]}

> x1b := 1: y0b := 1: singularities(F, x0b, y1b); unassign('x1b', 'y0b'):
{[[0, 0, 1], 4, 9, 1], [[0, 1, 0], 5, 10, 5], [[1, 0, 0], 4, 6, 4], 
[[RootOf(27*_Z^3+16), 4/9-(1/3)*RootOf(27*_Z^3+16)+RootOf(27*_Z^3+16)^2, 1], 2, 1, 1]}

\end{verbatim}
}
\end{ex}

\bibliographystyle{hacm}
\bibliography{bib2}

\end{document}